
\documentclass[12pt, reqno]{amsart}
\usepackage{fullpage,url,amssymb,mathrsfs}
\usepackage{color}
\usepackage[alphabetic,lite]{amsrefs}



\newcommand{\bvchange}[1]{{
			#1}}


\newtheorem{lemma}{Lemma}[section]
\newtheorem{theorem}[lemma]{Theorem}
\newtheorem{prop}[lemma]{Proposition}
\newtheorem{cor}[lemma]{Corollary}
\newtheorem{conj}[lemma]{Conjecture}

\newtheorem{claim*}{Claim}

\newtheorem{remark}[lemma]{Remark}
\newtheorem{thm}[lemma]{Theorem}

\newtheorem*{theorem*}{Theorem}


\newcommand{\C}{{\mathbb C}}
\newcommand{\F}{{\mathbb F}}
\newcommand{\Q}{{\mathbb Q}}
\newcommand{\R}{{\mathbb R}}
\newcommand{\Z}{{\mathbb Z}}

\newcommand{\BBl}{{\mathbb B}_{\ell, \infty}}

\newcommand{\WW}{{\mathbb W}}
\newcommand{\EE}{{\mathbb E}}

\newcommand{\Qbar}{{\overline{\Q}}}

\newcommand{\Fbar}{{\overline{\F}}}

\newcommand{\Dtilde}{{\widetilde{D}}}

\newcommand{\Rtilde}{{\widetilde{R}}}

\newcommand{\mm}{{\mathfrak m}}


\newcommand{\calE}{{\mathcal E}}

\newcommand{\OO}{{\mathcal O}}

\newcommand{\frakb}{\mathfrak{b}}

\newcommand{\frakp}{\mathfrak{p}}

\newcommand{\frakA}{\mathfrak{A}}

\newcommand{\frakI}{\mathfrak{I}}


\DeclareMathOperator{\Tr}{Tr}

\DeclareMathOperator{\im}{Im}

\DeclareMathOperator{\End}{End}

\DeclareMathOperator{\Hom}{Hom}

\DeclareMathOperator{\Aut}{Aut}

\DeclareMathOperator{\Norm}{N}

\DeclareMathOperator{\Pic}{Pic}
\DeclareMathOperator{\Jac}{Jac}

\DeclareMathOperator{\Mat}{M}
\DeclareMathOperator{\Disc}{D}
\DeclareMathOperator{\SL}{SL}
\DeclareMathOperator{\GL}{GL}

\DeclareMathOperator{\CM}{CM}

\DeclareMathOperator{\RO}{RO}


\newcommand{\into}{\hookrightarrow}

\numberwithin{equation}{section}
\numberwithin{table}{section}

\title{An arithmetic intersection formula for denominators of Igusa class polynomials}

\author{Kristin Lauter}
\author{Bianca Viray}
\thanks{The second author was partially supported by Microsoft Research, a Ford dissertation year fellowship, NSF grant DMS-1002933, and ICERM}

\address{Microsoft Research, 1 Microsoft Way, Redmond, WA 98062, USA}
\email{klauter@microsoft.com}
\urladdr{http://research.microsoft.com/en-us/people/klauter/default.aspx}

\address{University of Washington, Department of Mathematics, Box 354350, Seattle, WA 98195, USA}
\email{bviray@math.washington.edu}
\urladdr{http://math.washington.edu/\~{}bviray}

\date{}


\begin{document}

	\begin{abstract}
		In this paper we prove an explicit formula for the arithmetic intersection number $(\CM(K).\textup{G}_1)_{\ell}$ on the Siegel moduli space of abelian surfaces, generalizing the work of Bruinier-Yang and Yang.
These intersection numbers allow one to compute the denominators of Igusa class polynomials, which has important applications to the construction of genus $2$ curves for use in cryptography.
		
		Bruinier and Yang conjectured a formula for intersection numbers on an arithmetic Hilbert modular surface, and as a consequence obtained a conjectural formula for the intersection number $(\CM(K).\textup{G}_1)_{\ell}$ under strong assumptions on the ramification of the primitive quartic CM field $K$.  Yang later proved this conjecture assuming that $\OO_K$ is freely generated by one element over the ring of integers of the real quadratic subfield.  In this paper, we prove a formula for $(\CM(K).\textup{G}_1)_{\ell}$ for more general primitive quartic CM fields, and  we use a different method of proof than Yang.  We prove a tight bound on this intersection number which holds for {\it all} primitive quartic CM fields.  As a consequence, we obtain a formula for a multiple of the denominators of the Igusa class polynomials for an arbitrary primitive quartic CM field.  Our proof entails studying the Embedding Problem posed by Goren and Lauter and counting solutions using our previous article that generalized work of Gross-Zagier and Dorman to arbitrary discriminants.  
	\end{abstract}

	\maketitle
			

\section{Introduction}\label{sec:intro}

For a prime number $\ell$, the $\ell$-part of the arithmetic intersection number $(\CM(K).\textup{G}_1)$ counts, with multiplicity,
the number of isomorphism classes of abelian surfaces with CM by a primitive quartic CM field $K$ that reduce modulo $\ell$ to 
a product of two elliptic curves with the product polarization.  These intersection numbers have been studied in detail by Bruinier-Yang~\cite{BY}, Yang~\cite{Yang1, Yang2}, and Goren-Lauter~\cite{GL, GL10}.
In this paper, we give an exact formula for this $\ell$-part, denoted $(\CM(K).\textup{G}_1)_{\ell}$, under mild assumptions on $K$, and a tight bound on $(\CM(K).\textup{G}_1)_{\ell}$ for all primitive quartic CM fields $K$.

	The computation of $(\CM(K).\textup{G}_1)_{\ell}$ has applications to the computation of the Igusa class polynomials of $K$.  Igusa class polynomials are polynomials over $\Q$ which are the genus $2$ analogue of Hilbert class polynomials; namely, the roots of the Igusa class polynomials of $K$ determine genus $2$ curves whose Jacobians have complex multiplication by $K$.  However, in contrast to the genus $1$ case, the coefficients of Igusa class polynomials are \emph{not} integral and the presence of denominators makes the computation of these polynomials more difficult.  Indeed, all known algorithms to compute Igusa class polynomials require as input some bound on the denominators of the coefficients of the Igusa class polynomials.  In addition, the sharpness of the bound directly affects the efficiency of the algorithms.  
	The arithmetic intersection number $\CM(K).\textup{G}_1$ gives a method of studying these denominators.  In fact, up to cancellation from the numerators, the $\ell$-valuation of the denominators of Igusa class polynomials is exactly a (known) multiple of $(\CM(K).\textup{G}_1)_{\ell}$.

	Often, explicit formulas for the arithmetic intersection of CM-cycles with other cycles, such as the Humbert surface, are proved under severe restrictions on the ramification in the CM field $K$ (e.g. \cite{GZ-SingularModuli}).  Indeed, Yang proved an explicit formula for $(\CM(K).\textup{G}_1)_{\ell}$ under the assumption that the discriminant of $K$ is of the form $D^2\Dtilde$ where $D$ and $\Dtilde$ are primes congruent to $1 \pmod{4}$~\cite{Yang1, Yang2}, and that $\OO_K$ is freely generated over the ring of integers of the real quadratic subfield by one element of a certain form.  

This explicit formula was originally conjectured, with the assumption on the ramification but without the assumption on $\OO_K$, in earlier work of Bruinier and Yang~\cite{BY}.  
In recent work, the present authors with Grundman, Johnson-Leung, Salerno, and Wittenborn~\cite{WIN} showed that the conjecture of Bruinier and Yang does not hold (as stated) if the assumptions on the ramification are relaxed.  This gives evidence that, in the general case, the formula \emph{must} be more complicated.

The main result of this paper is an explicitly computable formula for the intersection number $(\CM(K).\textup{G}_1)_{\ell}$,  under the same assumption on $\OO_K$, for all $\ell$ outside a small finite set, and a tight upper bound for $(\CM(K).\textup{G}_1)_{\ell}$ in general (\S\ref{sec:MainResult}).  The dramatically weaker assumptions lead to a formula that is more complicated than that of Bruinier and Yang; however, in many cases it simplifies to a formula that is strikingly similar.  We give an example of this in~\S\ref{sec:WIN2}.	
	As a result of our formula and upper bound, we obtain a formula for a multiple of the denominators of the Igusa class polynomials for every primitive quartic CM field $K$.  We explain this further in~\S\ref{sec:Igusa}.
	
	\smallskip
	
	\noindent\textit{Remark. }
		The arithmetic intersection number $\CM(K).\textup{G}_1$ was also studied by Howard and Yang~\cite{HY}. They prove, under very mild assumptions, that the values $(\CM(K).\textup{G}_1)_{\ell}$ agree with Fourier coefficients of certain Eisenstein series; however, their work does not give an explicit formula for $(\CM(K).\textup{G}_1)_{\ell}$.

	\subsection{Overview of the tools}
		The first part of our proof takes its inspiration from work of Goren and the first author~\cite{GL, GL10} which gave a bound on the denominators appearing in the Igusa class polynomials,  first bounding the primes that can appear~\cite{GL}, and then bounding the powers~\cite{GL10}.  Their proof studied necessary conditions for the existence of a solution to the \textit{embedding problem}: the problem of determining whether there is an embedding $\OO_K\hookrightarrow \End_{\Fbar_{\ell}}(E_1\times E_2)$ such that complex conjugation agrees with the Rosati involution associated to the product polarization.  
		
		In this paper, we determine conditions that are \emph{equivalent} to the existence of a solution to the embedding problem and use these equivalent conditions to count the number of solutions to the embedding problem.  (Yang's proof~\cite{Yang1, Yang2} also began with a treatment of the embedding problem; however, our formulation of it is different and our methods diverge from Yang's after this step.)  First, we show that a solution to the embedding problem gives rise to a supersingular elliptic curve $E$ and endomorphisms $x,u\in\End(E)$ with fixed degree and trace. \bvchange{We  explain this  under assumption $(\dagger)$ in~\S\ref{sec:ProofOfMainTheorem}, and explain the modifications needed to lift the assumption $(\dagger)$ in~\S\ref{subsec:ProofOfUpperBound}.}
		
		Next, we count these pairs of endomorphisms using results from our earlier paper~\cite{LV-SingularModuli} that generalizes work of Gross and Zagier~\cite{GZ-SingularModuli}.  These results show that the number of pairs $(x,u)$ is equal to a weighted sum of the number of integral ideals in a quadratic imaginary order with a certain norm.  This is explained further in~\S\ref{sec:LVSingularModuli}.
		
		To go from pairs of endomorphisms $(x,u)$ to a solution of the embedding problem, we study isogenies $y,b$ of a fixed degree from an auxiliary elliptic curve $E'$ to $E$ such that $yb^{\vee} = u$ and such that $x, y,$ and $b$ satisfy an additional relationship depending on $K$.  Using Deuring's correspondence, we translate this to a problem of counting certain ideals in $\Mat_2(\Z_p)$.  We solve this counting problem in~\S\ref{sec:IdealsInM2Zp}.

\section*{Acknowledgements}
	 The first author thanks Tonghai Yang for explanations of his work~\cite{Yang1, Yang2} and the second author thanks Ben Howard, Bjorn Poonen, and Joseph H. Silverman for helpful conversations.  In addition, we thank the referee for his or her comments and careful reading of the manuscript.  


\section{A formula for $\CM(K)\cdot\textup{G}_1$}\label{sec:ThmStatement}

	\subsection*{Notation}
		We write $F$ for a real quadratic field, and $D$ for the discriminant of the ring of integers $\OO_F$.  Let $K$ denote a totally imaginary extension of $F$ that does not contain an imaginary quadratic field; $K$ is a {\it primitive quartic CM field}.  We say that an abelian surface $A$ has {\it CM by }$K$ if there is an embedding of the ring of integers $\OO_K$ into the endomorphism ring $\End(A)$.  Let $\CM(K)$ denote the moduli stack whose $S$-points are
		\[
			\left\{
			\begin{array}{ll}
				(A,\iota,\lambda):&A/S
				\textup{ is an abelian surface with principal
				polarization }\lambda,\\
			 	&\iota\colon\OO_K\hookrightarrow\End_S(A), \textup{ s.t. }
				\iota(\overline{\gamma}) = \iota(\gamma)^{\vee}
			\end{array}
			\right\}/\sim
		\]  
		where $\phi\mapsto\phi^{\vee}$ denotes the Rosati involution and $(A,\iota,\lambda)\sim(A',\iota',\lambda')$ if there is an isomorphism of principally polarized abelian surfaces between $(A,\lambda)$ and $(A',\lambda')$ that conjugates $\iota$ to $\iota'$.  There is a finite to one map from $\CM(K)$ to $M$, the Siegel moduli space of principally polarized abelian surfaces, obtained by sending $(A, \iota, \lambda)$ to $(A, \lambda)$.
		
		Let $\eta$ denote a fixed element of $\OO_K$ that generates $K/F$.  Often, we will assume that $\OO_K$ is freely generated over $\OO_F$ and that $\eta$ is a generator, i.e.,
		\begin{equation}\label{eq:assumption}
			\OO_K = \OO_F[\eta]\tag{$\dagger$}
		\end{equation}
		
		\bvchange{In order to state the main result, we need to introduce some constants.  The origin of these constants will become clear in~\S\ref{sec:ProofOfMainTheorem}; for now, it is enough to note that these values are easily computed once a choice of $\eta$ is fixed.}
		
		We write $\Dtilde := \Norm_{F/\Q}\left(\Disc_{K/F}(\eta)\right)$ where $\Disc_{K/F}(\eta)$ denotes the relative discriminant of $\OO_F[\eta]$ and we let $\alpha_0,\alpha_1,\beta_0,\beta_1\in\Z$ be such that $\Tr_{K/F}(\eta) = \alpha_0 + \alpha_1\omega$ and $\Norm_{K/F}(\eta) = \beta_0 + \beta_1\omega$, where $\omega = \frac12(D + \sqrt{D})$.  We define 
		\begin{equation}
			c_K := \Norm_{F/\Q}(\Tr_{K/F}(\eta)) - 2\Tr_{F/\Q}(\Norm_{K/F}(\eta)) = \alpha_0^2 + \alpha_0\alpha_1D + \frac14\alpha_1^2({D^2 - D}) - 4\beta_0 - 2\beta_1D.\label{eq:cK}
		\end{equation}
	
		For any positive integer $\delta$ such that $D - 4\delta$ is a square, we define $t_u(\delta) := \alpha_1\delta$ and define $t_x(\delta), t_w(\delta)\in\Z$ to satisfy
		\begin{equation}
			t_x(\delta) + t_w(\delta) = 2\alpha_0 + D\alpha_1, \quad
			t_w(\delta) - t_x(\delta) = \alpha_1\sqrt{D - 4\delta}, 
			\quad\textup{ where }\sqrt{D - 4\delta}>0.\label{eq:traces}
		\end{equation}
		\bvchange{Let $n$ be any integer $n$ such that $2D\mid(n + c_K\delta)$; note that since $n$ depends on a choice of $\delta$, so too will any constant that depends on $n$.  For simplicity, we omit this dependence on $\delta$ in the notation.  We define} 
		\begin{equation}
			n_u(n) := -\delta\cdot\frac{n + c_K\delta}{2D}, \quad
			t_{xu^{\vee}}(n) := \beta_1\delta + \sqrt{D - 4\delta}\frac{n + c_K\delta}{2D},\label{eq:normu}
		\end{equation}
		and let $n_x(n), n_w(n)\in\Z$ be such that
		\begin{equation}
			n_x(n) + n_w(n) = 2\beta_0 + D\beta_1 - 2n_u(n)/\delta,\quad
			n_w(n) - n_x(n) = \beta_1\sqrt{D - 4\delta}.\label{eq:norms}
		\end{equation}
		We also define $d_*(n) := t_*(\delta)^2 - 4n_*(n)$ for $*\in\{x,u,w\}$.  For any positive integer $f_u$, set
		\begin{equation}
			t(n, f_u) = \frac1{2f_u^2}(d_x(n)d_u(n) - f_u(t_x(\delta)t_u(\delta) - 2t_{xu^\vee}(n))).\label{eq:trxu}
		\end{equation}
		%
		
		Throughout we work with a fixed prime $\ell$; we write $\BBl$ for the rational quaternion algebra ramified only at $\ell$ and $\infty$.  For any $\gamma\in\BBl$, we let $\gamma^{\vee}$ denote the image of $\gamma$ under the natural involution and define $\Tr(\gamma) := \gamma + \gamma^{\vee}, \Norm(\gamma) := \gamma\gamma^{\vee}.$
		
	\subsection{The main result}\label{sec:MainResult}

		\begin{thm}\label{thm:main}\label{thm:MainResult}
			Assume $(\dagger)$.  If $\ell\nmid \delta$ for any positive integer $\delta$ such that $D - 4\delta$ is a square, then
		\[
			\frac{(\CM(K).G_1)_{\ell}}{\log \ell} =
				\sum_{\substack{\delta \in \Z_{>0}\\ D - 4\delta=\square}}
				C_{\delta}
				\sum_{\substack{n\in\Z\\
					\frac{\delta^2\Dtilde - n^2}{4D} \in\ell\Z_{>0}\\
					2D\mid(n + c_K\delta) }}\mu_{\ell}(n)
				\sum_{f_u}\frakI(n, f_u)\cdot\mathscr{J}
				\left(d_u(n)f_u^{-2}, d_x(n), t(n, f_u)\right).
		\]
		Otherwise,
		\[
			\frac{(\CM(K).G_1)_{\ell}}{\log \ell} \leq
				2\sum_{\substack{\delta \in \Z_{>0}\\ D - 4\delta =\square}}
				C_{\delta}\sum_{\substack{n\in\Z\\
				\frac{\delta^2\Dtilde - n^2}{4D} \in\ell\Z_{>0}\\
				2D\mid(n + c_K\delta) }}\mu_{\ell}(n)
				\sum_{f_u}\frakI(n, f_u)\cdot\mathscr{J}
								\left(d_u(n)f_u^{-2}, d_x(n), t(n, f_u)\right).
		\]
		Here $C_{\delta} = 2$ if $4\delta = D$ and otherwise $C_{\delta} = 1$, and $\mu_{\ell}(n) = v_{\ell}\left(\frac{\delta^2\Dtilde - n^2}{4D}\right)$ if $\ell$ divides both $d_u(n)$ and $d_x(n)$ and $\mu_{\ell}(n) = \frac12(v_{\ell}(\frac{\delta^2\Dtilde - n^2}{4D}) + 1)$ otherwise.  
		The sum $\sum_{f_u}$ ranges over positive integers $f_u$ such that $d_u(n)/f_u^2$ is the discriminant of a quadratic imaginary order that is maximal at $\ell$.  The quantity $\mathscr{J}\left(d_1, d_2, t\right)$ is a sum, over
		isomorphism classes of supersingular elliptic curves modulo $\ell$, of a number of pairs of embeddings, precisely $\mathscr{J}\left(d_1,d_2, t\right)$ equals
		\[
			\sum_{E/\Fbar_{\ell}}\#\left\{
			\begin{array}{ll}
				(i_1,i_2), i_j\colon\Z\left[\frac{d_j + \sqrt{d_j}}2\right]\hookrightarrow 
				\End(E) : &
				\Tr(i_1(d_1 + \sqrt{d_1})i_2(d_2 - \sqrt{d_2})) = 4t,\\
				& i_1(\Q(\sqrt{d_1}))\cap\End(E) = \Z\left[\frac{d_1 + \sqrt{d_1}}2\right]
			\end{array}
			\right\}/\End(E)^{\times}.
		\]
		Lastly,
		\[
			\frakI(n, f_u) := \prod_{p|\delta, p\neq \ell}
			\left(
			\sum_{\substack{j = 0\\j\equiv v_p(\delta)\bmod2}}^{v_p(\delta)}
			 \frakI^{(p)}_{j - r_p}(t_w(n), n_w(n))\right) ,
		\]
		where $r_p := \max\left(v_p(\delta) - \min(v_p(f_u), v_p(\frac{d_u(n) - t_uf_u}{2f_u})), 0\right)$ and
		\[
			\frakI_{C}^{(p)}(a_1, a_0) = 
			\begin{cases}
				\#\{\widetilde{t} \bmod p^{C} : 
					\widetilde{t}^2 - a_1\widetilde{t} + a_0
					\equiv 0 \pmod{p^{C}}\}
				& \textup{ if }  C\geq 0,\\
				0 & \textup{ if } C< 0.
			\end{cases}
		\]
	\end{thm}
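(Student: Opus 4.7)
The plan is to express $(\CM(K).G_1)_\ell$ as a count of solutions to the embedding problem $\OO_K\hookrightarrow\End(E\times E')$, with Rosati involution equal to complex conjugation on $\OO_K$, and then to factor that count into a diagonal contribution from pairs $(x,u)\in\End(E)^2$ (producing $\mathscr{J}$) and an off-diagonal contribution from isogenies $(y,b)\colon E'\to E$ (producing $\frakI$). To carry this out I would first interpret $(\CM(K).G_1)_\ell$ as counting, with appropriate multiplicity, the triples $(A,\iota,\lambda)\in\CM(K)(\Fbar_\ell)$ whose underlying principally polarized abelian surface is isomorphic to $(E\times E',\lambda_1\times\lambda_2)$ for supersingular elliptic curves $E$ and $E'$. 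Under the assumption $\OO_K=\OO_F[\eta]$, such a triple is encoded by the two matrices $\iota(\omega),\iota(\eta)\in\End(E\times E')$. Imposing Rosati-compatibility together with the quadratic relations satisfied by $\omega$ and $\eta$ over $\Q$ forces the off-diagonal entries of $\iota(\omega)$ to have norm $\delta$ with $D-4\delta=\square$, and extracts from $\iota(\eta)$ a pair $(x,u)\in\End(E)^2$ (with $u$ a product of off-diagonal entries) whose discriminants, traces, and mixed trace are exactly $d_x(n), d_u(n), t_x(\delta), t_u(\delta), t_{xu^\vee}(n)$. The divisibility condition $2D\mid(n+c_K\delta)$ records the integrality of $t_{xu^\vee}(n)$. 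This translation, carried out in Section~\ref{sec:ProofOfMainTheorem}, produces the outer double sum over $(\delta,n)$.

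Next, for fixed $(\delta,n)$, I would count the pairs $(x,u)$ modulo the action of $\End(E)^\times$ by summing over the conductor $f_u$ that records when $\Z[u]$ is contained in an order maximal at $\ell$. By the generalized Gross--Zagier--Dorman count of~\cite{LV-SingularModuli}, this number equals $\sum_{f_u}\mathscr{J}(d_u(n)f_u^{-2},d_x(n),t(n,f_u))$, where each $\mathscr{J}$ factor is a weighted count of integral ideals of prescribed norm in a quadratic imaginary order. The factors $C_\delta$ and $\mu_\ell(n)$ are bookkeeping corrections: $C_\delta$ accounts for the symmetry between the two diagonal entries in the boundary case $4\delta=D$, while $\mu_\ell(n)$ distributes the single invariant $v_\ell((\delta^2\Dtilde-n^2)/(4D))$ according to the joint $\ell$-adic behavior of $\Z[x]$ and $\Z[u]$.

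For each pair $(x,u)$, the remaining task is to count lifts to a full embedding, i.e., off-diagonal isogenies $(y,b)\colon E'\to E$ of the correct degree satisfying $yb^\vee=u$ together with the residual $\eta$-relations on the $w$-entry. Via Deuring's correspondence this decomposes into independent local problems at primes $p\mid\delta$. For $p\neq\ell$, the local problem reduces to enumerating left ideals in $\Mat_2(\Z_p)$ that realize a prescribed factorization of $u$, and Section~\ref{sec:IdealsInM2Zp} shows that this count equals the number of roots modulo $p^{v_p(\delta)-r_p}$ of the quadratic congruence defining $\frakI^{(p)}_C(t_w(n),n_w(n))$; the exponent shift $r_p$ measures how much of the $p$-adic divisibility has already been consumed by $f_u$ and by $u$ itself. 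Taking the product over $p\mid\delta$, $p\neq\ell$, assembles $\frakI(n,f_u)$.

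The main obstacle is the local analysis at $\ell$ when $\ell\mid\delta$. In that case the off-diagonal isogeny $b$ has $\ell$-power degree, and because $E$ is supersingular, $\End(E)\otimes\Z_\ell$ does not split as $\Mat_2(\Z_\ell)$; the clean enumeration of ideals is replaced by a count inside a non-split local quaternion order, for which only an upper bound, inflated by a factor of two, is accessible in general. This is precisely what forces the theorem to degenerate from an equality to an inequality when $\ell\mid\delta$. Controlling this factor of two, and verifying that all other contributions remain tight, is the core technical difficulty. Secondary difficulties are combinatorial: pinning down the correct multiplicity $\mu_\ell(n)$, the boundary factor $C_\delta$, and the shift $r_p$, by matching the $p$-adic valuations forced by the $\eta$-relations against those recorded by the data $(x,u)$.
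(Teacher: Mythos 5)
Your overall architecture matches the paper's: translate $(\CM(K).G_1)_\ell$ into the embedding problem via $\OO_K=\OO_F[\eta]$, extract the data $(\delta,a,x,y,b,z)$ from $\iota(\omega)$ and $\iota(\eta)$, pass to the pair $(x,\,u=yb^{\vee})\in\End(E_1)^2$ whose invariants are the $d_x(n), d_u(n), t_{xu^{\vee}}(n)$, count such pairs by $\sum_{f_u}\mathscr{J}$ using the generalized Gross--Zagier results, and count the fiber of $(E_2,y,b,z)$ over a fixed $(x,u)$ by Deuring's correspondence and an ideal count in $\Mat_2(\Z_p)$ for $p\mid\delta$, $p\neq\ell$, yielding $\frakI(n,f_u)$. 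That much agrees with the paper.

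The gap is in your account of where the factor of $2$ and the degeneration to an inequality come from when $\ell\mid\delta$. You locate it in the local ideal count at $\ell$ inside the non-split order $\End(E)\otimes\Z_\ell$, claiming only a $2$-inflated upper bound is accessible there. That is not correct: in the ramified local quaternion algebra there is a unique maximal order whose one-sided ideals are classified by the valuation of their norms, so the local count at $\ell$ is exactly $1$ --- there is nothing to inflate, and the paper uses precisely this triviality. The actual obstruction is deformation-theoretic, and your proposal never engages with it. The intersection number is a sum of lengths of local rings on the two-variable deformation space $\WW[[t_1,t_2]]$ of $E_1\times E_2$, and the strategy hinges on comparing $\textup{length}\,\WW[[t_1,t_2]]/I_{x,y,b,z}$ with $\textup{length}\,\WW[[t_1]]/I_{x,u}$; the latter is what $\mu_\ell(n)$ computes, via Gross's canonical and quasi-canonical lifts, so it is not mere bookkeeping. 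When $\ell\nmid\delta$, the isogeny $b$ has degree prime to $\ell$, hence identifies the formal groups of $E_1$ and $E_2$, and a Gross--Keating argument gives equality of the two lengths. When $\ell\mid\delta$ this identification fails; one only has a surjection $\WW[[t_1,t_2]]/(I_{x,u}+J)\twoheadrightarrow\WW[[t_1,t_2]]/I_{x,y,b,z}$ with $J$ generated by a monic polynomial of degree at most $2$ in $t_2$, whence the factor of $2$. Without this length comparison your plan produces a weighted point count rather than an intersection multiplicity, and the mechanism you propose at $\ell$ would incorrectly return an equality even when $\ell\mid\delta$.
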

	\begin{remark}
	The quantity $\mathscr{J}\left(d_1, d_2, t\right)$ can be computed, for any given $K$, via an algorithm presented in~\cite{WIN}. Additionally, Theorem~\ref{thm:LVSingularModuli} below will give a formula for $\mathscr{J}\left(d_1, d_2, t\right)$ in most cases, and an upper bound for $\mathscr{J}\left(d_1, d_2, t\right)$ in the remaining cases.  \bvchange{  Furthermore} Conjecture~\ref{conj:localfactors} and Remark~\ref{rem:localfactors} give an even simpler expression for $\mathscr{J}\left(d_1, d_2, t\right)$ as a product of local factors which holds under some additional assumptions.
	\end{remark}	

	If $\OO_K$ is not freely generated over $\OO_F$, then the same methods give an upper bound for the arithmetic intersection number.
	\begin{thm}\label{thm:UpperBound}
		For every $\eta\in\OO_K$ such that $[\OO_K : \OO_F[\eta]]$ is relatively prime to $\ell$ and all primes $p \leq D/4$, we have an upper bound:
		\[
			\frac{(\CM(K).G_1)_{\ell}}{\log \ell} \leq
				2\sum_{\substack{\delta \in \Z_{>0}\\ D - 4\delta=\square}}
				C_{\delta}
				\sum_{\substack{n\in\Z\\
					\frac{\delta^2\Dtilde - n^2}{4D} \in\ell\Z_{>0}\\
					2D\mid(n + c_K\delta) }}\mu_{\ell}(n)
				\sum_{f_u}\frakI(n, f_u)\cdot\mathscr{J}
				\left(d_u(n)f_u^{-2}, d_x(n), t(n, f_u)\right),
		\]
		with the notation as in Theorem~\ref{thm:main}.
	\end{thm}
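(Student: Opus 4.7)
The starting point is the observation that even when assumption~\eqref{eq:assumption} fails, the inclusion $\OO_F[\eta]\subseteq\OO_K$ still holds. Consequently, any triple $(A,\iota,\lambda)$ representing a point of $\CM(K)$ restricts to give a weak CM datum $(A,\iota|_{\OO_F[\eta]},\lambda)$, i.e.\ an embedding of the order $\OO_F[\eta]$ into $\End(A)$ compatible with the Rosati involution. Since $\End(A)$ is torsion-free as a $\Z$-module, two embeddings of $\OO_K$ that agree on the finite-index suborder $\OO_F[\eta]$ must agree on all of $\OO_K$, so restriction is injective on embeddings. Hence
\[
\frac{(\CM(K).G_1)_{\ell}}{\log\ell}\leq N(\eta,\ell),
\]
where $N(\eta,\ell)$ denotes the analogous $\ell$-adic intersection multiplicity counting embeddings of $\OO_F[\eta]$ in place of $\OO_K$. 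It therefore suffices to bound $N(\eta,\ell)$ by the right-hand side of Theorem~\ref{thm:UpperBound}.

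The plan is then to run the entire proof of Theorem~\ref{thm:main} verbatim with $\OO_K$ replaced by the (possibly non-maximal) order $\OO_F[\eta]$. All of the combinatorial invariants in the statement, namely $\alpha_i,\beta_i,c_K,t_*(\delta),n_*(n),d_*(n),t(n,f_u)$, depend only on $\eta$ and its minimal polynomial over $\OO_F$, and so are well-defined without assumption~\eqref{eq:assumption}. The reduction from the embedding problem to pairs $(x,u)\in\End(E)^2$ of prescribed trace and norm (\S\ref{sec:ProofOfMainTheorem}), and the subsequent counting via $\mathscr{J}$ using results from~\cite{LV-SingularModuli} (\S\ref{sec:LVSingularModuli}), are both formally insensitive to whether the order is maximal.

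The only places where maximality of $\OO_K$ enters the proof of Theorem~\ref{thm:main} in an essential way are the local analyses at $\ell$ (the $\Mat_2(\Z_{\ell})$-computation of \S\ref{sec:IdealsInM2Zp} that produces $\mu_{\ell}(n)$) and at primes $p\mid\delta$ (which produces the factor $\frakI(n,f_u)$). The hypothesis $\ell\nmid[\OO_K:\OO_F[\eta]]$ gives $\OO_F[\eta]\otimes\Z_{\ell}=\OO_K\otimes\Z_{\ell}$, so the $\ell$-adic step is identical to the maximal case. For $p\mid\delta$, the relation $D-4\delta\geq 0$ forces $\delta\leq D/4$ and hence $p\leq D/4$; the hypothesis $p\nmid[\OO_K:\OO_F[\eta]]$ then yields $\OO_F[\eta]\otimes\Z_p=\OO_K\otimes\Z_p$, and the formula for $\frakI(n,f_u)$ is unchanged. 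The overall factor of $2$ absorbs the standard $\ell$-adic loss present in the ``otherwise'' case of Theorem~\ref{thm:main}, which we now invoke uniformly.

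The main obstacle is the careful bookkeeping in the preceding paragraph: one must verify that no step in the proof of Theorem~\ref{thm:main} uses the global equality $\OO_K=\OO_F[\eta]$ rather than merely the local equalities at $\ell$ and at the primes $p\leq D/4$. Once this verification is completed, the upper bound form of the proof of Theorem~\ref{thm:main} applies to $N(\eta,\ell)$, and combined with the initial restriction inequality, this yields the stated bound.
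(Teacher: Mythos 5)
Your proposal matches the paper's proof: restrict each $\OO_K$-embedding to $\OO_F[\eta]$, bound the intersection number by the analogous count for the non-maximal order, and rerun the proof of Theorem~\ref{thm:main}, observing that maximality is needed only locally at $\ell$ and at primes $p\mid\delta\leq D/4$ (the paper packages this as replacing Lemma~\ref{lem:RelativelyPrimeIndices} by the weaker Lemma~\ref{lem:FundamentalAtEllNonMaximalCase}). The one detail you gloss over is that, because the terms are weighted by $1/\#\Aut$, the comparison needs $\Aut(E_1,E_2,\iota)=\Aut(E_1,E_2,\iota|_{\OO_F[\eta]})$ rather than mere injectivity of restriction on embeddings; the paper obtains this equality from the fact that the center of $\End(E_1\times E_2)\otimes\Q$ is $\Q$.
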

	
	The quantity $\mathscr{J}\left(d_1, d_2, t\right)$ is related to the $\ell$ valuation of $J(d_1, d_2) = \prod_{[\tau_1],[\tau_2]}(j(\tau_1) - j(\tau_2))$.  It was considered first in 1985 by Gross and Zagier in the case that $d_1$ and $d_2$ are discriminants of imaginary quadratic fields and that $d_1$ and $d_2$ are relatively prime~\cite{GZ-SingularModuli}.  The present authors recently generalized much of~\cite{GZ-SingularModuli} to arbitrary discriminants~\cite{LV-SingularModuli}.  As $d_1 = d_u(n)f_u^{-2}$ and $d_2 = d_x(n)$ are not necessarily relatively prime nor necessarily discriminants of \emph{maximal} orders, this generalization is needed to compute $\mathscr{J}\left(d_1, d_2, t\right)$ and thus to give a formula for $(\CM(K).G_1)_{\ell}.$  Using results from~\cite{LV-SingularModuli}, we obtain the following theorem.
	
	\begin{thm}\label{thm:LVSingularModuli}
		Fix $n, f_u\in\Z$ as above, set $d_x := d_x(n), d_u := d_u(n), t := t(n, f_u)$, and write $\OO_u$ for the quadratic imaginary order of discriminant $d_u/f_u^2$.  If the Hilbert symbol 
		\[
		(d_u, D(n^2 - \delta^2\Dtilde))_p = (d_u, (d_uf_u^{-2}d_x - 2t)^2 - d_u{f_u}^{-2}d_x)_p
		\] is equal to $-1$ for some prime $p\ne \ell$, then $\mathscr{J}\left(d_uf_u^{-2}, d_x, t\right) = 0$.  If $\frac{\delta^2\Dtilde - n^2}{4Df_u^2}$ is coprime to the conductor of $\OO_u$, then $\mathscr{J}\left(d_uf_u^{-2}, d_x, t\right)$ equals
		\begin{equation}\label{eq:BoundForscrJ}
			2^{\#\{p\; :\; v_p(t) \geq v_p(d_uf_u^{-2}) > 0, p\nmid 2\ell\}}
			\cdot\tilde\rho_{d_uf_u^{-2}}^{(2)}(t, d_x)\cdot
			\#\left\{\frakb\subseteq\OO_u
			: \Norm(\frakb) = \frac{\delta^2\Dtilde - n^2}{4D\ell f_u^2},
			\frakb \textup{ invertible}\right\},
		\end{equation}
		where 
		\[
		\widetilde{\rho}^{(2)}_d(s_0,s_1)  := 
			\left\{
			\begin{array}{ll}
				2 & \textup{if } d \equiv 12 \bmod{16}, 
					s_0\equiv s_1\bmod2\\
				 & \textup{or if } 8\mid d, v_2(s_0) \ge v_2(d) - 2\\
				1& \textup{otherwise}
			\end{array}
			\right\}
			\cdot
			\left\{
			\begin{array}{ll}
				2 & \textup{if } 32\mid d, 4\mid (s_0 - 2s_1)\\
				1& \textup{otherwise}
			\end{array}
			\right\},
		\]
		\bvchange{if $\ell \neq 2$ and $\widetilde{\rho}^{(2)}_d(s_0,s_1) = 1$ if $\ell =2$.}
		Furthermore, in all cases, $\mathscr{J}\left(d_uf_u^{-2}, d_x, t\right)$ is bounded above by~\eqref{eq:BoundForscrJ} and there is an algorithm to compute $\mathscr{J}\left(d_uf_u^{-2}, d_x, t\right)$.
	\end{thm}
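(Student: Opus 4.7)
The plan is to deduce this theorem from the main counting results of the companion paper~\cite{LV-SingularModuli}, which generalize the Gross--Zagier method to embeddings of arbitrary, not necessarily fundamental or coprime, quadratic discriminants. The first step is to translate $\mathscr{J}\left(d_uf_u^{-2}, d_x, t\right)$ into a quaternion-theoretic count. By Deuring's correspondence, the isomorphism classes of supersingular elliptic curves $E/\Fbar_{\ell}$ are in bijection with conjugacy classes of maximal orders $\OO \subset \BBl$, with $\End(E)^{\times}$ corresponding to $\OO^{\times}$. Thus the inner sum in $\mathscr{J}$ becomes the count, up to $\OO^{\times}$-conjugacy, of pairs $(x_1, x_2) \in \OO\times\OO$ where $x_1$ is an optimal embedding of the quadratic order of discriminant $d_uf_u^{-2}$, $x_2$ is an element with trace and norm matching discriminant $d_x$, and the mutual trace $\Tr(x_1 x_2^{\vee})$ is prescribed by $t$.

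The Hilbert symbol arises as the local obstruction to the existence of such a pair. Computing the reduced norm on the $\Q$-span of $1, x_1, x_2, x_1x_2$ in $\BBl$ and tracking the determinant of the resulting quaternary form yields the discriminant $d_u \cdot \bigl((d_uf_u^{-2}d_x - 2t)^2 - d_uf_u^{-2}d_x\bigr)$, which after substituting the explicit identities from Section~\ref{sec:ThmStatement} equals $d_u \cdot D(n^2 - \delta^2\Dtilde)$. Since $\BBl$ is ramified only at $\ell$ and $\infty$, the existence of such a pair is obstructed at some $p \ne \ell$ precisely when the corresponding Hilbert symbol vanishes, giving the first assertion.

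Assuming non-vanishing, I would apply the counting theorem of~\cite{LV-SingularModuli}: once $x_1$ is fixed, the admissible elements $x_2$ with the correct trace, norm, and mutual trace are parameterized, up to the action of $\OO_u^{\times}$, by integral ideals $\frakb \subseteq \OO_u$ of norm $(\delta^2\Dtilde - n^2)/(4Df_u^2\ell)$, the factor $\ell$ appearing because $\End(E)$ has reduced discriminant divisible by $\ell$. Under the coprimality hypothesis with the conductor $f_u$, every such ideal is invertible, and the local density decomposition at odd primes $p\ne\ell$ produces the multiplicity $2^{\#\{p\,:\,v_p(t)\ge v_p(d_uf_u^{-2})>0,\,p\nmid 2\ell\}}$ from primes where the two embeddings share a common quadratic direction, while the $2$-adic local density packages into $\widetilde{\rho}^{(2)}_{d_uf_u^{-2}}(t,d_x)$ by the case table of the generalized formula.

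The main obstacle, where the most care is required, is at $p = 2$ and at primes dividing the conductor $f_u$, since the relevant local orders in $\BBl$ need not be Eichler; matching the $2$-adic contributions from~\cite{LV-SingularModuli} to the explicit case distinctions defining $\widetilde{\rho}^{(2)}$ requires a careful bookkeeping of the $2$-adic valuations of $t$, $d_x$, and $d_uf_u^{-2}$, as well as their residues modulo low powers of $2$. When the coprimality hypothesis fails, the map from pairs $(x_1,x_2)$ to ideals may hit non-invertible ideals, so~\eqref{eq:BoundForscrJ} ceases to be exact; however, each pair still produces an invertible sub-ideal of the prescribed norm (by intersecting with a larger order), so the expression remains a valid upper bound in general. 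The algorithmic statement then follows by explicitly enumerating the finitely many invertible ideal classes of the prescribed norm and, for each, checking the local conditions at $2$ and at primes dividing the conductor, each of which can be carried out in bounded time.
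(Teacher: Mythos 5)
Your overall strategy---translate $\mathscr{J}$ into a count of pairs of embeddings into maximal orders of $\BBl$ via Deuring, read off the Hilbert-symbol obstruction from the quaternary norm form on the span of $1, x_1, x_2, x_1x_2$, and then invoke the counting theorem of~\cite{LV-SingularModuli}---is the same as the paper's. But there is a genuine gap in the step where you ``apply the counting theorem of~\cite{LV-SingularModuli}.'' The set counted there (Theorem~\ref{thm:SummaryOfLV} in this paper) imposes the optimality condition $\Q(\phi)\cap\End(\overline{E(\tau_1)}) = \Z[\phi]$ on the \emph{second} element $\phi$ as well, and accordingly its ideal count is $\frakA(N)$, which carries extra local conditions at primes dividing $\gcd(N, f_2)$, where $f_2$ is the conductor of $d_2 = d_x$. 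The quantity $\mathscr{J}$, by contrast, requires optimality only of $i_1$; the second embedding $i_2$ is unconstrained. A direct citation of the companion theorem therefore counts a different set and yields a different formula from~\eqref{eq:BoundForscrJ}. The paper bridges this either by rerunning the proof of~\cite[Thm.~3.1]{LV-SingularModuli} with every step involving the conductor of $d_2$ deleted, or by decomposing the unconstrained set as a disjoint union over divisors $f\mid f_2$ of optimally embedded sets with discriminant $d_2f^{-2}$ and suitably rescaled mutual trace, and then summing the companion formula over $f$. Your proposal needs one of these two devices; without it the claimed identification of the ideal count is not justified.

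Two smaller points. First, passing from the $\End(E)^{\times}$-orbit count over supersingular curves to a sum over CM points $[\tau_1]$ requires Deuring lifting plus a count of how many classes $[\tau_1]$ lie over a given pair $(E,i_1)$ (one if $\ell\nmid d_1$, two if $\ell\mid d_1$, by Gross) and a unit normalization; these produce a factor $4/(w_1 e)$ that must be checked to cancel against the corresponding normalization in~\cite{LV-SingularModuli}. Second, your explanation of the coprimality hypothesis is slightly off: its role is not to force invertibility of the ideals (invertibility is imposed in the formula in all cases), but to guarantee that the map from ideal classes back to endomorphisms is surjective with the correct multiplicity, which is what upgrades the upper bound to an equality.
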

	\begin{remark}
		If $\frac{\delta^2\Dtilde - n^2}{4Df_u^2}$ is coprime to the conductor of $\OO_u$, then the quantity 
		\[
			2^{\#\{p\; :\; v_p(t) \geq v_p(d_uf_u^{-2}) > 0, p\nmid 2\ell\}}
			\cdot\tilde\rho_{d_uf_u^{-2}}^{(2)}(t, d_x)
		\]
		simplifies to
		\[
			2^{\#\{p\; :\;p|\textup{gcd}(Nf_u^{-2}, d_uf_u^{-2}), p\nmid \ell\}}
		\]
		where $N = \frac{\delta^2\Dtilde - n^2}{4D}$.
	\end{remark}

	In the case that $\frac{\delta^2\Dtilde - n^2}{4D}$ is coprime to the conductor, we can also express $\mathscr{J}\left(d_uf_u^{-2}, d_x, t\right)$ as a product of local factors.  This expression leads us to the following conjecture.
	\begin{conj} \label{conj:localfactors}
		Let $d_1$ and $d_2$ be discriminants of quadratic imaginary orders and fix an integer $t$.  Assume that conductor of $d_1$, the conductor of $d_2$, and $m := \frac14(d_1d_2 - (d_1d_2 - 2t)^2)$ have no simultaneous common factor.
		 Then
		\[
		\mathscr{J}\left(d_1, d_2, t\right) = \prod_{p|m, p\ne\ell}
					\begin{cases}
						1 + v_p(m) & \left(\frac{d_{(p)}}{p}\right) = 1, p\nmid f_1,\\
						2 & \left(\frac{d_{(p)}}{p}\right) = 1, p|f_1, \textup{ or}\\
						& p|d_{(p)}, (d_{(p)}, -m)_p = 1, p\nmid f_1\\
						1 & \left(\frac{d_{(p)}}{p}\right) = -1, p\nmid f_1, v_p(m)\textup{ even} \textup{ or}\\
						& p|d_{(p)}, (d_{(p)}, -m)_p = 1, p |f_1 , v_p(m) = 2\\
						0 & \textup{otherwise},
					\end{cases}
		\]
		where $d_{(p)}\in\{d_1,d_2\}$ is such that the quadratic imaginary order of discriminant $d_{(p)}$ is maximal at $p$, $f_1$ denotes the conductor of $d_1$, \bvchange{and $\left(\frac{d_{(p)}}{p}\right)$ denotes the Kronecker symbol of $d_{(p)}$ at $p$}.
	\end{conj}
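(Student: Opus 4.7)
The plan is to derive the conjectured product formula by combining Theorem~\ref{thm:LVSingularModuli} with the classical multiplicativity of the function counting invertible ideals of prescribed norm in a quadratic imaginary order. The hypothesis that the conductor of $d_1$, the conductor of $d_2$, and $m$ have no simultaneous common factor means that at every prime $p \mid m$ at least one of the two orders is maximal at $p$; this is exactly what allows a well-defined choice of $d_{(p)}$ in the statement.

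First I would handle the ``generic'' case in which $m$ is coprime to the conductor $f_1$ of $d_1$, where Theorem~\ref{thm:LVSingularModuli} gives
\[
\mathscr{J}(d_1,d_2,t) = 2^{\#\{p\,:\,v_p(t)\ge v_p(d_1)>0,\,p\nmid 2\ell\}}\cdot\widetilde{\rho}^{(2)}_{d_1}(t,d_2)\cdot\#\{\frakb\subseteq\OO_u:\Norm(\frakb)=m/\ell,\,\frakb\text{ invertible}\}.
\]
The ideal-counting factor is classically multiplicative: the number of invertible ideals of norm $p^k$ in the order of discriminant $d_1$ is $k+1$ if $p$ splits in $\Q(\sqrt{d_1})$, is $1$ if $p$ ramifies (unramified outside the conductor), and is $(1+(-1)^k)/2$ if $p$ is inert. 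Grouping the extra powers of $2$ into the split and ramified primes recovers each row of the conjecture in which $p\nmid f_1$. To handle primes $p\mid f_1$, I would exploit the symmetry of $\mathscr{J}(d_1,d_2,t)$ under swapping its two discriminant arguments (both orders play formally identical roles in the defining counting problem), and reapply Theorem~\ref{thm:LVSingularModuli} to the pair $(d_2,d_1,t)$. The no-common-factor hypothesis guarantees that after the swap every relevant prime falls under the maximality assumption of Theorem~\ref{thm:LVSingularModuli}, so the corresponding local factor can be read off from the classical ideal count in the order of discriminant $d_{(p)}$.

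The main obstacle will be the ``exceptional'' row of the conjecture, where $p\mid d_{(p)}$, $(d_{(p)},-m)_p=1$, $p\mid f_1$, and $v_p(m)=2$, contributing local factor $1$. After the symmetry swap this row pertains to primes where $d_{(p)}$ is maximal but the \emph{other} discriminant has conductor divisible by $p$, and the interaction between the trace condition $\Tr(i_1(d_1+\sqrt{d_1})i_2(d_2-\sqrt{d_2}))=4t$ and the conductor of the non-maximal order must be analyzed directly, presumably via Eichler orders in $\End(E)$ following the techniques of~\cite{LV-SingularModuli}. A secondary obstacle is verifying that the $2$-adic factor $\widetilde{\rho}^{(2)}_{d_1}(t,d_2)$ together with the bonus power of $2$ indexed by $\{p:v_p(t)\ge v_p(d_1)>0,\ p\nmid 2\ell\}$ distributes correctly across the local factors, so that the conjecture's uniform treatment of $p=2$ is justified. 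This last step should reduce to a careful case analysis of Hilbert symbols and ramification at $2$, matching the residues of $d_1,t,d_2$ modulo small powers of $2$ against the local factor predicted in the conjecture.
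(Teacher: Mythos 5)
The statement you are addressing is left as a \emph{conjecture} in the paper: the authors prove it only in the case where $f_1$ and $m$ are coprime (Remark~\ref{rem:localfactors}), where it follows from Theorem~\ref{thm:LVSingularModuli} together with the standard multiplicative count of invertible ideals of given norm in a quadratic imaginary order --- exactly your ``generic'' first step. The general case, i.e.\ primes $p$ dividing both $f_1$ and $m$, is precisely what remains open, and your proposal does not close it. The central gap is the claimed symmetry of $\mathscr{J}(d_1,d_2,t)$ under swapping its two discriminant arguments. The two embeddings do \emph{not} play identical roles in the defining counting problem: the optimality condition $i_1(\Q(\sqrt{d_1}))\cap\End(E)=\Z\bigl[\tfrac{d_1+\sqrt{d_1}}{2}\bigr]$ is imposed only on $i_1$, and Theorem~\ref{thm:LVSingularModuli} additionally requires the first discriminant to be maximal at $\ell$. (The trace condition is symmetric, since $\Tr\bigl(i_1(d_1+\sqrt{d_1})\,i_2(d_2-\sqrt{d_2})\bigr)$ is invariant under the Rosati involution combined with cyclicity of the trace, but the optimal-embedding constraint is not.) This asymmetry is the whole point of the definition --- in the application $d_1=d_u(n)f_u^{-2}$ is by construction the discriminant of $\Q(u)\cap\End(E)$, whereas $\Z[x]$ need not be optimally embedded --- so you cannot simply ``reapply Theorem~\ref{thm:LVSingularModuli} to the pair $(d_2,d_1,t)$'' and read off the local factor at primes dividing $f_1$.

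Beyond that, the two ``obstacles'' you flag at the end --- the exceptional row with $p\mid d_{(p)}$, $p\mid f_1$, $v_p(m)=2$, and the distribution of the $2$-adic factor $\widetilde{\rho}^{(2)}_{d_1}$ and the bonus powers of $2$ across local factors --- are not side issues to be deferred; they \emph{are} the content of the conjecture beyond the known case. Saying they must be ``analyzed directly, presumably via Eichler orders'' is a research plan, not a proof. To make progress you would need to redo the local analysis of \cite{LV-SingularModuli} at primes dividing the conductor of $d_1$, where the ideal $\frakb\subseteq\OO_u$ attached to an embedding need not be invertible and the bijection between embeddings and ideals of fixed norm breaks down; this is exactly why the authors state the result as a conjecture rather than a theorem.
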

	\begin{remark} \label{rem:localfactors}
		This conjecture holds when $f_1$ and $m$ are coprime; in that case it follows from Theorem~\ref{thm:LVSingularModuli}.
	\end{remark}

	Together, Theorems~\ref{thm:MainResult} and~\ref{thm:LVSingularModuli} give a sharp bound on $(\CM(K).G_1)_{\ell}$ for all primes $\ell$, and a sharp bound on the primes $\ell$ such that $(\CM(K).G_1)_{\ell}\neq 0$.  The following Corollary gives a characterization of these primes.
	
	\begin{cor}\label{cor:Divisibility}
		Assume $(\dagger)$ and that $(\CM(K).G_1)_{\ell}\neq 0$.  Then there exists a $\delta\in\Z_{>0}$ and $n\in\Z$ such that $D - 4\delta$ is a square, $n \equiv -c_K\delta\pmod{2D}$,
		\[
			N := \frac{\delta^2\Dtilde- n^2}{4D} \in \ell\Z_{>0}, \quad 
			\textup{ and }\quad
			(d_u(n), -N)_p = \begin{cases} 1& \textup{if }p\neq \ell,\\
			-1& p = \ell.\end{cases}
		\]
	\end{cor}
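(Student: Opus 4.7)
The plan is to reduce the corollary to a standard Hilbert reciprocity argument by chaining together the structural information in Theorem~\ref{thm:main} with the Hilbert-symbol obstruction in Theorem~\ref{thm:LVSingularModuli}.

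First, I would use Theorem~\ref{thm:main} to extract from the hypothesis $(\CM(K).G_1)_{\ell}\neq 0$ a single nonzero summand on the right-hand side of the formula. Such a summand produces a positive integer $\delta$ with $D - 4\delta$ a square; an integer $n$ with $2D\mid(n + c_K\delta)$ (equivalently, $n\equiv -c_K\delta \pmod{2D}$) and $N := \frac{\delta^2\Dtilde - n^2}{4D}\in\ell\Z_{>0}$; and a positive integer $f_u$ for which $d_u(n)/f_u^2$ is the discriminant of a quadratic imaginary order (so in particular $d_u(n) < 0$) and $\mathscr{J}\left(d_u(n)f_u^{-2}, d_x(n), t(n,f_u)\right)\neq 0$. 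The first three conditions in the conclusion of the corollary are then immediate.

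Next, I would invoke Theorem~\ref{thm:LVSingularModuli}, whose first assertion is that $\mathscr{J}$ vanishes whenever the Hilbert symbol $(d_u(n)f_u^{-2}, D(n^2 - \delta^2\Dtilde))_p$ equals $-1$ for some prime $p\neq\ell$. Contrapositively, the $\mathscr{J}$ selected above being nonzero forces this symbol to equal $+1$ at every finite $p\neq\ell$. A small simplification using $f_u^{-2}, 4D^2 \in (\Q_p^\times)^2$ together with the identity $D(n^2 - \delta^2\Dtilde) = -4D^2 N$ rewrites the condition as $(d_u(n), -N)_p = 1$ for every finite prime $p\neq\ell$.

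Finally, I would apply Hilbert reciprocity $\prod_v(d_u(n), -N)_v = 1$ to pin down the symbol at $\ell$. Since $d_u(n) < 0$ and $-N < 0$, the archimedean factor equals $-1$; combined with the previous step, which shows that all finite factors at primes different from $\ell$ equal $+1$, reciprocity forces $(d_u(n), -N)_\ell = -1$, completing the proof. The main place I would look for a hidden obstacle is in verifying that the $d_u(n)$ produced by the chosen nonvanishing summand really is negative: this ought to follow from the indexing convention in Theorem~\ref{thm:main} that requires $d_u(n)/f_u^2$ to be the discriminant of a quadratic imaginary order, but I would want to check that this requirement is genuinely built into the summation range rather than only holding vacuously when $\mathscr{J}$ happens to be nonzero. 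Granting that, the argument is essentially a translation between the two preceding theorems and Hilbert reciprocity.
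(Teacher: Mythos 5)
Your proposal is correct and follows essentially the same route as the paper: extract a nonvanishing summand from Theorem~\ref{thm:main}, use the Hilbert-symbol vanishing criterion of Theorem~\ref{thm:LVSingularModuli} at the finite primes $p\neq\ell$, and close with the product formula at $\ell$ using $(d_u(n),-N)_\infty=-1$. The only cosmetic difference is the point you flag yourself: the paper establishes $d_u(n)<0$ by a direct computation from the total imaginarity of $K$ (valid for every $n$ in the summation range), whereas you deduce it from the requirement that $d_u(n)/f_u^2$ be the discriminant of a quadratic imaginary order for some $f_u$ in the (necessarily nonempty) inner sum; both are fine.
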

	\begin{remark}
		One obtains the same corollary even when $\OO_K$ is not generated over $\OO_F$ by one element, by replacing $(\dagger)$ with the assumption that $\OO_F[\eta]$ is maximal at $\ell$ and all prime $p\leq D/4$.  Note that different choices of $\eta\in\OO_K$ result in different values of $\Dtilde = \Norm_{F/\Q}(\Disc_{K/F}(\eta))$ and each choice results in a valid upper bound.
	\end{remark}
	\begin{proof}
		By Theorem~\ref{thm:MainResult}, $(\CM(K).\textup{G}_1)_{\ell}$ is always bounded above by a sum over $\delta\in\Z_{>0}$ such that $D - 4\delta$ is a square and a sum over $n\in\Z$ such that $2D|(n + c_K\delta)$ and such that $N := \frac{\delta^2\Dtilde - n^2}{4D}$ is a positive integer divisible by $\ell$.  Thus, it remains to show that if $(\CM(K).G_1)_{\ell}\neq 0$, then 
		\[
			(d_u(n), -N)_p = 
			\begin{cases} 
				1& \textup{if }p\neq \ell,\\
				-1& p = \ell,
			\end{cases}
		\]
		for some $\delta,n$ as above.  
		
		We first prove that if $n$ satisfies the above assumptions, then $d_u(n)$ is negative.  Since $K$ is a totally imaginary extension of $F$, the relative discriminant of $\eta$ is negative under both real embeddings of $F\hookrightarrow \R$.  Using the definition of $\alpha_i, \beta_i$ and $c_K$, one can check that
		\[
			\Disc_{K/F}(\eta) = c_K + \alpha_1^2\frac{D}{2} + \sqrt{D}
			\left(\alpha_0\alpha_1 + \alpha_1^2\frac{D}2 - 2\beta_1\right).
		\]
		Recall that $\Norm_{F/\Q}(\Disc_{K/F}(\eta)) = \Dtilde$, thus $c_K + \alpha_1^2\frac{D}{2} < -\sqrt\Dtilde$.  Now consider
		\[
			d_u(n) = (\alpha_1\delta)^2 + \frac{2\delta(n + c_K\delta)}{D}
			= \frac{2\delta^2}{D}\left(\alpha_1^2 \frac{D}2 + \frac{n}\delta + c_K\right).
		\]
		Since $\delta^2\Dtilde - n^2 >0$, $\frac{n}{\delta}$ is bounded above by $\sqrt{\Dtilde}$.  Thus $d_u(n) < \frac{2\delta^2}{D}\left(\alpha_1^2 \frac{D}2 + \sqrt\Dtilde + c_K\right)$.  We have already shown that $\alpha_1^2 \frac{D}2 + \sqrt\Dtilde + c_K<0$ and $2\delta^2/D$ is clearly positive, so $d_u(n)$ is strictly negative. 
		
		Since $N$ is assumed to be positive, $(d_u(n), -N)_{\infty} = -1$, and so, by the product formula, there exists some prime $p$ such that $(d_u(n), -N)_p = -1$.  If $p \neq \ell$, then by Theorem~\ref{thm:LVSingularModuli}, $\mathscr{J}(d_uf_u^{-2}, d_x, t) = 0$ for all $f_u \in \Z$.  Another application of Theorem~\ref{thm:MainResult} shows that this implies that $(\CM(K), \textup{G}_1)_{\ell} = 0$.
	\end{proof}

	\subsection{An application: Denominators of Igusa class polynomials}
	\label{sec:Igusa}

		One of the important applications of the results in this paper is the computation of Igusa class polynomials.  Igusa invariants and Igusa class polynomials are the genus $2$ analogues of the $j$-invariant and the Hilbert class polynomial in genus $1$.  More precisely, Igusa invariants $i_1, i_2, i_3$ generate the function field of the coarse moduli space of smooth genus $2$ curves, and the Igusa class polynomials $H_{j, K},$ for $ j = 1,2,3$, are polynomials whose roots are Igusa invariants of genus $2$ curves $C/\C$ with an embedding $\iota\colon\OO_K\hookrightarrow \End(\Jac(C))$.  If a genus $2$ curve $C$ has CM by $K$, then $C$ is defined over $\Qbar$ and all of the Galois conjugates of $C$ also have CM by $K$.  Thus, $H_{j,K}\in\Q[z]$ for all $j$.
	
	However, in contrast to the genus $1$ case, the coefficients of $H_{j,K}$ are \emph{not} integral.  Therefore, in order to recover the coefficients from a complex or $p$-adic approximation, one needs more information on the denominators.  The denominators of the coefficients of $H_{j, K}$ divide a (known) multiple of the arithmetic intersection number  $\CM(K).\textup{G}_1$ (using multiplicative notation)~\cite{GL, GL10, Yang1}.  For a precise statement of this divisibility, see~\cite[\S9]{Yang1}.

Since Theorems~\ref{thm:main} and~\ref{thm:UpperBound} give a multiple of and, in many cases, an exact formula for $(\CM(K).\textup{G}_1)_{\ell}$, we obtain a formula for a multiple of the denominators of $H_{j,K}$ for \emph{all} primitive quartic CM fields.  Corollary~\ref{cor:Divisibility} also gives a restrictive characterization and bound on the primes that can appear in the denominators. 

	\subsection{Relationship to the Bruinier-Yang conjecture}\label{sec:WIN2}

		{As we mentioned in the introduction, earlier work of Yang proves an explicit formula for ${(\CM(K).G_1)_{\ell}}$, which was originally conjectured jointly with Brunier, under strong assumptions on the ramification in $K/\Q$.  The Brunier-Yang formula also sums over integers $\delta$ and $n$, satisfying similar conditions to those in Theorem~\ref{thm:main}.  However, the summand (for a fixed $\delta$ and $n$) is simpler than what appears in Theorem~\ref{thm:main}; in the Brunier-Yang formula, the summand only consists of a product of a valuation term and the number of ideals in a real quadratic field of a fixed norm.

		Under additional assumptions, the formula in Theorem~\ref{thm:main} simplifies to a formula which is strikingly similar to the Brunier-Yang formula. }
	
		
		\begin{thm}\label{thm:SpecialCase}
			Assume $(\dagger)$, that $\ell\nmid\delta$ for any positive integer such that $D - 4\delta$ is a square, and that $d_u(n)$ is a fundamental discriminant for any $n\in\Z$ such that $N = \frac{\delta^2\Dtilde - n^2}{4D}\in\ell\Z_{>0}$ and $2D|(n + c_K\delta)$.  Then
			\[
				\frac{(\CM(K).G_1)_{\ell}}{\log \ell} = 
					\sum_{\substack{\delta \in \Z_{>0}\\ 
						D - 4\delta=\square}}C_{\delta}
						\sum_{\substack{n\in\Z\\
						\frac{\delta^2\Dtilde - n^2}{4D} \in\ell\Z_{>0}\\
							2D\mid(n + c_K\delta) }}
						\mu_{\ell}(n)
						\tilde\rho_{d_u(n)}(N)\tilde\frakA_{d_u(n)}(N\ell^{-1}),
			\]
			where $C_{\delta} = \frac12$ if $D = 4\delta$ and $C_{\delta} = 1$ otherwise,
			\[
			\mu_{\ell}(n) = 
			\begin{cases}
				 v_{\ell}(N) & 
				\textup{if }\ell|\textup{gcd}(d_x(n),d_u(n)), \\
				\frac{v_{\ell}(N) + 1}2 & \textup{otherwise},
			\end{cases}
			\quad
			\tilde\rho_{d}(M) = 
			\begin{cases}
				0 & \textup{if }\left(d, -M \right)_p = -1 \\
				&\textup{for some }p|d, p\ne \ell\\
				2^{\#\{p|\textup{gcd}(d, M): p\ne \ell \}} & \textup{otherwise,}
			\end{cases}
		\]
		and $\tilde\frakA_d(M) := \#\{\frakb\subseteq \Z[\frac{d + \sqrt{d}}{2}] : \Norm(\frakb) = M\}$. 
		\end{thm}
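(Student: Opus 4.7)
The plan is to derive Theorem~\ref{thm:SpecialCase} from Theorem~\ref{thm:MainResult} by feeding in the explicit formula for $\mathscr{J}$ provided by Theorem~\ref{thm:LVSingularModuli} and exploiting the fundamental-discriminant hypothesis to collapse the inner structure. Concretely, the goal is to show that under the extra assumptions the inner expression $\sum_{f_u} \frakI(n, f_u) \mathscr{J}(d_u(n) f_u^{-2}, d_x(n), t(n, f_u))$ from Theorem~\ref{thm:MainResult} reduces to $\tilde{\rho}_{d_u(n)}(N) \tilde{\frakA}_{d_u(n)}(N/\ell)$.

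First, I would argue that the sum $\sum_{f_u}$ contains only the single term $f_u = 1$. This sum ranges over positive integers $f_u$ for which $d_u(n)/f_u^2$ is the discriminant of a quadratic imaginary order; a short case analysis on $d_u(n) \bmod 16$ --- using that a fundamental discriminant is either an odd squarefree integer congruent to $1 \pmod 4$ or of the form $4m$ with $m$ squarefree and $m \equiv 2, 3 \pmod 4$ --- shows that the only $f_u$ giving such a discriminant is $f_u = 1$. Next I would show that $\frakI(n, 1) = 1$: for $f_u = 1$ the valuation $v_p((d_u - t_u)/2)$ is a non-negative integer (as $d_u$ and $t_u$ share parity), so $r_p = v_p(\delta)$; the inner sum over $j$ then collapses to its top term with $\frakI^{(p)}_0 = 1$, and the product over $p \mid \delta$ equals $1$.

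It remains to identify $\mathscr{J}(d_u(n), d_x(n), t(n, 1))$ with $\tilde{\rho}_{d_u(n)}(N) \tilde{\frakA}_{d_u(n)}(N/\ell)$. Since $\OO_u$ is now the maximal order, the coprime-to-conductor hypothesis of Theorem~\ref{thm:LVSingularModuli} is vacuous and every integral ideal of $\OO_u$ is automatically invertible, so the ideal count reduces to $\tilde{\frakA}_{d_u(n)}(N/\ell)$. The remark following Theorem~\ref{thm:LVSingularModuli} further simplifies the prefactor $2^{\#\{p : v_p(t) \geq v_p(d_u) > 0, p \nmid 2\ell\}} \tilde{\rho}^{(2)}_{d_u}(t, d_x)$ to $2^{\#\{p : p \mid \gcd(N, d_u), p \nmid \ell\}}$, which is exactly $\tilde{\rho}_{d_u(n)}(N)$ in the non-vanishing case. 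The Hilbert-symbol vanishing condition at primes $p \mid d_u$ matches the vanishing in $\tilde{\rho}_{d_u(n)}(N)$, while vanishing at primes $p \nmid d_u$ is encoded in the vanishing of $\tilde{\frakA}_{d_u(n)}(N/\ell)$, since the nonexistence of an ideal of the prescribed norm in the maximal order encodes exactly that local obstruction.

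The main obstacle will be the careful bookkeeping of the constant $C_\delta$ in the degenerate case $D = 4\delta$: Theorem~\ref{thm:MainResult} assigns $C_\delta = 2$ while Theorem~\ref{thm:SpecialCase} assigns $C_\delta = \tfrac12$, so a compensating factor of $4$ must emerge from the degeneracy $\sqrt{D - 4\delta} = 0$. In this case $t_x = t_w$ and $n_x = n_w$ (hence $d_x = d_w$), which produces extra symmetry in the pair-of-embeddings count underlying $\mathscr{J}$; tracking this symmetry precisely --- and verifying that the sum over $n \in \Z$ together with the sign ambiguity in $\sqrt{D - 4\delta}$ combine correctly --- will be the most delicate step of the argument.
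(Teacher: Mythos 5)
Your overall strategy is the intended one: the paper gives no separate proof of Theorem~\ref{thm:SpecialCase}, which is meant to follow by substituting Theorem~\ref{thm:LVSingularModuli} into Theorem~\ref{thm:MainResult}. Your three main reductions are correct. The fundamental-discriminant hypothesis does force $f_u=1$; with $f_u=1$ one has $\min(v_p(1), v_p(\frac{d_u-t_u}{2}))=0$, hence $r_p=v_p(\delta)$, all terms $j<v_p(\delta)$ vanish and the top term is $\frakI^{(p)}_0=1$, so $\frakI(n,1)=1$. And since $\OO_u$ is maximal, the coprimality-to-conductor hypothesis of Theorem~\ref{thm:LVSingularModuli} is automatic, every ideal is invertible, the remark converts the prefactor to $2^{\#\{p\,:\,p\mid\gcd(N,d_u),\,p\nmid\ell\}}$, and the Hilbert-symbol dichotomy (obstruction at $p\mid d_u$ absorbed into $\tilde\rho$, obstruction at $p\nmid d_u$ forcing $\tilde\frakA_{d_u}(N/\ell)=0$ because $p$ is then inert and divides $N/\ell$ to odd order) works exactly as you describe, using $(d_u, D(n^2-\delta^2\Dtilde))_p=(d_u,-4D^2N)_p=(d_u,-N)_p$.

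The gap is your treatment of $C_\delta$. The mechanism you propose cannot work: $\mathscr{J}(d_1,d_2,t)$ is by definition a function of the three integers $d_1,d_2,t$ alone --- it is a count of pairs of embeddings into endomorphism rings and carries no memory of $\delta$, $D$, or whether $\sqrt{D-4\delta}=0$ --- and it appears \emph{verbatim} in both Theorem~\ref{thm:MainResult} and (after your substitution) Theorem~\ref{thm:SpecialCase}. The same is true of $\mu_\ell(n)$, $\frakI(n,f_u)$, and the index sets. Hence pure substitution transports the constant $C_\delta=2$ of Theorem~\ref{thm:MainResult} unchanged, and no compensating factor of $4$ can ``emerge from the degeneracy'' downstream; the symmetry $d_x=d_w$ you point to affects the count of tuples $(x,y,b,z)$ versus embeddings $\iota$ in \S\ref{sec:ProofOfMainTheorem} (where the two-to-one correspondence $(x,y,b,z)\leftrightarrow(z,y^\vee,b^\vee,x)$ is the actual source of the constant), not the value of $\mathscr{J}$. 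As stated, the two theorems assign $2$ and $\tfrac12$ to the same degenerate term and are not reconcilable by substitution alone; to complete the proof you must either retrace the constant through the bookkeeping of \S\ref{subsec:summary} (where the proof produces $\tfrac12\sum_\delta C_\delta(\cdots)$ with $C_\delta=\tfrac12$ when $4\delta=D$) and adopt the normalization that emerges there, or explicitly flag that one of the two stated constants is a normalization error. Asserting that the factor of $4$ ``must emerge'' and deferring its source to $\mathscr{J}$ leaves the one genuinely nontrivial step of the derivation unproved.
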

		
		\bvchange{This similarity raises the question of whether there is a direct proof (i.e., without passing through the arithmetic intersection numbers ${(\CM(K).G_1)_{\ell}}$ relating the formula in Theorem~\ref{thm:SpecialCase} to the Brunier-Yang formula.  In recent work, the present authors and Anderson, Balakrishnan, and Park~\cite{WIN2} have shown that this is indeed the case, and give a direct proof that the formulas agree.
		} 


\section{Proof of Theorem~\ref{thm:main}}\label{sec:ProofOfMainTheorem}
	
	Since $K$ does not contain an imaginary quadratic field, $\CM(K)$ and $\textup{G}_1$ intersect properly~\cite[\S 3]{Yang2} and so
	\begin{equation}\label{eq:intersection}
		\frac{(\CM(K).G_1)_{\ell}}{\log \ell} = \sum_{P \in (\CM(K)\cap G_1)(\Fbar_{\ell})} \frac{1}{\#\Aut(P)}\cdot \textup{length } \widetilde{\OO}_{G_1\cap\CM(K), P}
	\end{equation}
where $\widetilde{\OO}_{G_1\cap\CM(K), P}$ is the local ring of  $G_1\cap\CM(K)$ at $P$.
		
	The cycle $G_1$ parametrizes products of elliptic curves with the product polarization; the Rosati involution induced by this polarization is given by
	\[
		g = \begin{pmatrix} g_{1,1} & g_{1,2}\\g_{2,1} & g_{2,2}\end{pmatrix}\in\End(E_1\times E_2) \mapsto
		g^{\vee} = \begin{pmatrix} g_{1,1}^{\vee} & g_{2,1}^{\vee}\\g_{1,2}^{\vee} & g_{2,2}^{\vee} \end{pmatrix},
	\]
	where $g_{i,j}\in\Hom(E_j,E_i)$ and $g_{i,j}^{\vee}$ denotes the dual isogeny of $g_{i,j}$~\cite[Section 3]{GL}.  Given this definition, one can see that a pair of elliptic curves $(E_1, E_2)$, together with an embedding $\iota\colon \OO_K \into \End(E_1\times E_2)$ that satisfies $\iota(\overline{\alpha}) = \iota(\alpha)^{\vee}$, determines a point $P\in(\CM(K)\cap G_1)(\Fbar_{\ell})$.  Conversely, a point $P\in (\CM(K)\cap G_1)(\Fbar_{\ell})$ determines an \emph{isomorphism class} $[(E_1,E_2,\iota)]$; we say two tuples
	$(E_1, E_2, \iota\colon \OO_K \hookrightarrow\End(E_1\times E_2))$ and
	$(E'_1, E'_2, \iota'\colon \OO_K \hookrightarrow\End(E'_1\times E'_2))$ are isomorphic if there exists an isomorphism $\psi\colon E_1 \times E_2 \stackrel{\sim}{\to} E'_1 \times E'_2$ such that
	\[
		\psi\circ\iota(\alpha) = \iota'(\alpha)\circ\psi\;\;
		\forall\alpha \in \OO_K,
		\;\textup{ and }\;
		\psi\circ g^{\vee}\circ\psi^{-1} =
		\left(\psi\circ g\circ\psi^{-1}\right)^{\vee} \;\;
		\forall g\in\End(E_1\times E_2).
	\]
	When $E_i = E'_i$, then the tuples are isomorphic if and only if there exists a $\psi \in \Aut(E_1 \times E_2)$ such that $\psi\circ\iota(\alpha) = \iota'(\alpha)\circ\psi$ for all $\alpha \in \OO_K$ and $\psi\psi^{\vee} = 1$.

	Given two elliptic curves $E_1, E_2$, let $\WW[[t_1, t_2]]$ be the deformation space of $E_1, E_2$, and let $\EE_1, \EE_2$ be the universal curves over this space.  We let $I_{E_1, E_2, \iota} \subset \WW[[t_1, t_2]]$ denote the minimal ideal such that there exists an $\widetilde{\iota}\colon \OO_K \into \End_{\WW[[t_1, t_2]]/I_{E_1, E_2, \iota}}(\EE_1, \EE_2)$ that agrees with $\iota$ after reducing modulo the maximal ideal of $\WW[[t_1, t_2]]$.  Then we have
	\[
		\textup{length }\widetilde{\OO}_{G_1\cap\CM(K), P} =
		 \textup{length } \WW[[t_1, t_2]]/I_{E_1,E_2,\iota},
	\]
	for any point $P\leftrightarrow(E_1, E_2, \iota)\in\left(G_1\cap\CM(K)\right)(\Fbar_{\ell})$.  Thus,~\eqref{eq:intersection} can be rewritten as
	\begin{equation}\label{eqn:intersection-emb}
		\frac{(\CM(K).G_1)_{\ell}}{\log \ell} = \sum_{(E_1,E_2,\iota)/\sim}
		\frac{1}{\#\Aut(E_1,E_2, \iota)} \cdot \textup{length }
		\frac{\WW[[t_1, t_2]]}{I_{E_1,E_2,\iota}},
	\end{equation}
	where $\Aut(E_1, E_2, \iota) := \left\{\sigma\in \Aut(E_1 \times E_2) : \sigma \iota(\alpha)\sigma^{\vee} = \iota(\alpha) \;\forall \alpha\in\OO_K\textup{ and } \sigma\sigma^{\vee} = 1\right\}$.  The condition  $\sigma\sigma^{\vee} = 1$ ensures that $\sigma$ preserves the product polarization.

	Since $\OO_K = \OO_F[\eta]$, giving an embedding $\iota\colon\OO_K\into\End(E_1\times E_2)$ is equivalent to specifying the image of $\omega = \frac12(D + \sqrt{D})$ and $\eta$, i.e., specifying two elements $\Lambda_1, \Lambda_2$ in $\End(E_1 \times E_2)$ such that
	\[
		\Lambda_1\Lambda_2  = \Lambda_2\Lambda_1,\quad
		\Lambda_2 + \Lambda_2^{\vee}  = \alpha_0 + \alpha_1\Lambda_1,\quad
		\Lambda_2\Lambda_2^{\vee}  =
			\beta_0 + \beta_1\Lambda_1, \textup{ and }\quad
		\Lambda_1^2 - D\Lambda_1 + \frac{D^2 - D}4  = 0.
	\]
The equivalence is obtained by letting $\Lambda_1 = \iota\left(\frac{D + \sqrt{D}}{2}\right), \Lambda_2 = \iota(\eta)$.  
This equivalence is a more precise reformulation of the Embedding Problem than the version used in~\cite[p. 463]{GL}, where the elements from $\OO_K$ being embedded were of a simpler form and were not necessarily generators of $\OO_K$.
By representing elements in $\End(E_1\times E_2)$ as $2\times2$ matrices $(g_{i,j})$ where $g_{i,j}\in\End(E_j, E_i)$ and expanding the above relations, we see that
	\[
		\Lambda_1 = \begin{pmatrix} a& b \\ b^{\vee} & D-a\end{pmatrix},
		\quad
		\Lambda_2 = \begin{pmatrix} x & y \\\alpha_1b^{\vee} - y^{\vee} & z
	\end{pmatrix},
	\]
	where $a$ is an integer and $x,b,y,z$ satisfy
	\begin{equation}\label{eq:xybz}
		\begin{array}{rl}
			\delta := \Norm(b) &= \frac{D - (D - 2a)^2}{4},\\
			\Tr(yb^{\vee}) = \Tr(y^{\vee}b) & =  \Norm(b)\alpha_1,\\
			\Norm(z) + \Norm(y) & =	\beta_0 + (D - a)\beta_1,\\
			\Norm(x) + \Norm(y) & = \beta_0 + a \beta_1,							\end{array}
		\quad
		\begin{array}{rl}
			\Tr(x) &=  \alpha_0 + a\alpha_1,\\
			\Tr(z) & = \alpha_0 + (D - a)\alpha_1,\\
			\beta_1b & = \alpha_1xb - xy + yz^{\vee},\\
			bz & = xb + (D - 2a)y.
		\end{array}
	\end{equation}
	After possibly conjugating $\Lambda_1, \Lambda_2$ by $\begin{pmatrix} 0& 1\\1&0\end{pmatrix}$ and interchanging $E_1, E_2$, we may assume that $2a \leq D$.  Then $a$ is uniquely determined by $\delta$.  Thus for a fixed $\delta$, the embedding $\iota$ is determined by a tuple $(x,y,b,z)$ satisfying the above relations.  Define $I := I_{x,y,b,z}\subseteq \WW[[t_1, t_2]]$ to be the minimal ideal such that there exists
	\[
		\widetilde{x}\in\End_{\WW[[t_1, t_2]]/I}(\EE_1), \quad
		\widetilde{y}, \widetilde{b}
			\in\Hom_{\WW[[t_1, t_2]]/I}(\EE_2, \EE_1), \quad\textup{ and }
		\widetilde{z}\in\End_{\WW[[t_1, t_2]]/I}(\EE_2)
	\]
	that reduce to $x,y,b,$ and $z$, respectively, modulo the maximal ideal of $\WW[[t_1, t_2]]$.  Then it is clear from the definition of $(x,y,b,z)$ that
	\[
		\textup{length }\frac{\WW[[t_1,t_2]]}{I_{E_1,E_2,\iota}} = 
		\textup{length }\frac{\WW[[t_1,t_2]]}{I_{x,y,b,z}}.
	\]

	Motivated by the definition of isomorphisms of triples $(E_1, E_2, \iota)$ that was given above, we say that two such tuples $(x,y,b,z), (x',y',b',z')$ are isomorphic if
	\[
		x\phi_1 = \phi_1 x',\; b\phi_2 = \phi_1b',\; y\phi_2 = \phi_1y',\;
		  z\phi_2 = \phi_2 z', \quad\textup{for some }\phi_i\in\Aut(E_i).
	\]
	In particular,
	\begin{align*}
		\Aut(x,y,b,z) & := \left\{
		\phi_i\in \Aut(E_i)
		: x\phi_1 = \phi_1 x,\; b\phi_2 = \phi_1b,\; y\phi_2 = \phi_1y,\;
		  z\phi_2 = \phi_2 z \right\}.
	\end{align*}
	If $4\delta \neq D$, then $(x,y,b,z)$ is isomorphic to $(x',y',b',z')$ if and only if the corresponding embeddings are isomorphic. Thus, $\#\Aut(x,y,b,z) = \#\Aut(E_1,E_2,\iota)$.
		
	If $4\delta = D$, then this no longer holds.  If $E_1 \neq E_2$, then $\#\Aut(x,y,b,z) = \#\Aut(E_1,E_2,\iota)$ for all $\iota$ and corresponding $x,y,b,z$; however, $(x,y,b,z)$ and $(z,y^{\vee},b^{\vee},x)$ correspond to the same embedding, although \bvchange{as tuples they are not isomorphic.}  If $E_1 = E_2$, then for each tuple $(x,y,b,z)$ we have two possibilities.  Either there exists an $(x',y',b',z')$ that is \emph{not} isomorphic to $(x,y,b,z)$ but corresponds to an isomorphic embedding, or there are twice as many automorphisms of $(E_1,E_2,\iota)$ as there are of $(x,y,b,z)$, where $\iota$ is the corresponding embedding.  In all cases, we see that for a fixed $\delta$
	\[
		\sum_{E_1,E_2,\iota}\frac{1}{\#\Aut(E_1,E_2,\iota)}\cdot
		\textup{length }\frac{\WW[[t_1,t_2]]}{I_{E_1,E_2,\iota}} = 
		C_{\delta}\sum_{\substack{E_1,E_2\\x,y,b,z}}
		\frac{1}{\#\Aut(x,y,b,z)} \cdot \textup{length }
		\frac{\WW[[t_1, t_2]]}{I_{x,y,b,z}},
	\]
	where $C_{\delta} = \frac12$ if $4\delta=D$ and $1$ otherwise.

	Fix $\delta, E_1, E_2$, and assume that there exists a tuple $(x,y,b,z)$ as above.  Then, there exists $x, u := yb^{\vee} \in \End(E_1)$ satisfying
	\begin{equation}\label{eq:xu}
		\begin{array}{rlcrl}
			\Tr(u) & =  \alpha_1\delta,&\quad&\Tr(x) & = \alpha_0 + a\alpha_1,\\
			\left(D - 2a\right)\Norm\left(u\right)
				+ \delta\Tr\left(xu^{\vee}\right) & =  \beta_1\delta^2,&&
		\delta\Norm(x) + \Norm(u) & =
			\delta\left(\beta_0 + a\beta_1\right),
		\end{array}
	\end{equation}
	where $a \in \Z$ is such that $a\leq D/2$ and $(D - 2a)^2 = D - 4\delta.$  This is easy to check using the relations~\eqref{eq:xybz} on $(x,y,b,z)$.  Let $I_{x,u}\subseteq\WW[[t_1]]$ be the minimal ideal such that there exists
	\[
		\tilde{x}, \tilde{u}\in\End_{\WW[[t_1]]/I_{x,u}}(\mathbb{E}_1)
	\]
	that reduce to $x,u$ respectively modulo the maximal ideal of $\WW[[t_1]]$.
	
	The remainder of the proof breaks into four steps.
	\begin{enumerate}
		\item[(\S\ref{subsec:xu})] Compute $\sum_{(E,x,u)}\textup{length }\frac{\WW[[t_1]]}{I_{x,u}}$, where the sum ranges over isomorphism classes of $(E,x,u)$ satisfying~\eqref{eq:xu},
		\item[(\S\ref{subsec:xybz})] For a fixed $(E, x, u)$ determine the number of isomorphism classes of $(E', y, b, z)$ such that $u = yb^{\vee}$ and $(x,y,b,z)$, satisfy~\eqref{eq:xybz} ,
		\item[(\S\ref{subsec:auto})] Calculate $\#\Aut(x,y,b,z)$ .
		\item[(\S\ref{subsec:mult})] Determine how the length of $\frac{\WW[[t_1, t_2]]}{I_{x,y,b,z}}$ relates to the length of $\frac{\WW[[t_1]]}{I_{x,u}}$. 
	\end{enumerate}
	As it is not necessarily obvious how the arguments in \S\S\ref{subsec:xu}--\ref{subsec:mult} come together, we summarize the argument in \S\ref{subsec:summary}.
		
	\subsection{Calculating the number of $(E, x, u)$}\label{subsec:xu}
		In this section we will compute
		\[
			\sum_{\substack{(E,x,u) \textup{ satisfying}\\ \eqref{eq:xu}}} \textup{length }\frac{\WW[[t_1]]}{I_{x,u}},
		\]
		where the sum ranges over one representative from each isomorphism class; we say that $(E,x,u)$ is isomorphic to $(E', x', u')$ if there exists an isomorphism $\psi\colon E \to E'$ such that $\psi\circ x = x'\circ \psi$ and $\psi\circ u = u'\circ \psi$.  
		
		First we show that the elements $(E,x,u)$ are naturally partitioned by an integer $n$ and that $E$ is always supersingular.
		\begin{prop}\label{prop:ell-divisibility}
			Let $E$ be an elliptic curve over $\Fbar_{\ell}$ and assume that there exists endomorphisms $x$ and $u$ of $E$ that satisfy~\eqref{eq:xu}.  Then $E$ is supersingular and there exists an integer $n$ such that
			\begin{equation}\label{eq:conditions_on_n}
				\frac{\delta^2\Dtilde - n^2}{4D}\in \ell\Z_{>0},
				\textup{ and } n + c_{K}\delta \equiv 0 \pmod{2D},
			\end{equation}
			where $c_K = \alpha_0^2 + \alpha_0\alpha_1D + \alpha_1^2\frac{D^2 - D}{4} - 4\beta_0 - 2\beta_1D$ \bvchange{as in~\eqref{eq:cK}.}
		\end{prop}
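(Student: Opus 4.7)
The plan is to construct the integer $n$ explicitly, verify the congruence, and establish supersingularity and $\ell$-divisibility.

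From the fourth equation of~\eqref{eq:xu}, $\delta\Norm(x) + \Norm(u) = \delta(\beta_0 + a\beta_1)$, one sees $\delta\mid\Norm(u)$. Set $\mu := \Norm(u)/\delta \in \Z_{\ge 0}$ and define $n := -c_K\delta - 2D\mu\in\Z$. Then $n + c_K\delta = -2D\mu \equiv 0\pmod{2D}$, verifying the congruence condition. For supersingularity, I invoke the implicit presence of $E_2$ and the cross-isogenies from the construction: since $\Norm(b) = \delta > 0$, $E = E_1$ and $E_2$ are isogenous, so both ordinary or both supersingular. If both were ordinary with common CM field $L$, then $\End(E_1\times E_2)\otimes \Q \cong \mathrm{M}_2(L)$ would be a central simple $L$-algebra of $\Q$-dimension~$8$, and any $\Q$-subfield $K$ of degree~$4$ would have to contain $L$ (else $K\cap L = \Q$ forces the commutative $\Q$-subalgebra $K\cdot L$ to have dimension~$8$, equal to all of $\mathrm{M}_2(L)$, contradicting non-commutativity). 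Primitivity of $K$ then yields a contradiction, so $E$ is supersingular.

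For the numerical condition $(\delta^2\Dtilde - n^2)/(4D)\in\ell\Z_{>0}$, I first compute the quantity explicitly. Expanding $\Disc_{K/F}(\eta) = (\alpha_0+\alpha_1\omega)^2 - 4(\beta_0+\beta_1\omega)$ in the basis $\{1,\omega\}$ and applying $\Norm_{F/\Q}$ yields the identity $\Dtilde = c_K^2 - 4DS$, with $S := \alpha_1^2\beta_0 - \alpha_0\alpha_1\beta_1 + \beta_1^2$. Substituting $n = -c_K\delta - 2D\mu$ gives
\[
	\frac{\delta^2\Dtilde - n^2}{4D} = -(S\delta^2 + c_K\delta\mu + D\mu^2),
\]
which is manifestly an integer. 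Positivity and $\ell$-divisibility follow by identifying this integer with the reduced norm of an explicit endomorphism of $E_1\times E_2$ constructed from $x, u, y, b$, arising naturally from the matrix identity $(\Lambda_2 - \Lambda_2^\vee)^2 = p_0 + p_1\Lambda_1$, which translates $(\eta - \bar\eta)^2 = \Disc_{K/F}(\eta)$ via $\iota$. Non-vanishing of this endomorphism---a consequence of primitivity of $K$ together with supersingularity of $E_1, E_2$---gives positivity; $\ell$-divisibility then follows from the structure of $\End(E)$ as a maximal order in $\BBl$, ramified at $\ell$, since the endomorphism lies in the unique two-sided maximal ideal above $\ell$ whose elements have reduced norm divisible by $\ell$.

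The main technical obstacle is this last identification: pinning down the precise endomorphism whose reduced norm equals $(\delta^2\Dtilde - n^2)/(4D)$ up to the correct integer constant, and extracting $\ell$-divisibility cleanly through a Gram-determinant or Hilbert-symbol computation inside $\End(E)\subset \BBl$.
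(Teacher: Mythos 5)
Your setup ($\delta\mid\Norm(u)$, the choice $n:=-c_K\delta-2D\Norm(u)/\delta$, and the resulting congruence) matches the paper's, and your algebraic identity for $\frac{\delta^2\Dtilde-n^2}{4D}$ is consistent with it. But the two substantive claims are exactly where the proposal does not close. First, your supersingularity argument imports data that the proposition does not give you: the statement is about a single curve $E$ with two endomorphisms $x,u$ satisfying the trace/norm relations~\eqref{eq:xu}, with no $E_2$, $y$, or $b$ in sight, and the later counting arguments apply the proposition to exactly such bare triples $(E,x,u)$. The paper's proof is intrinsic to $E$: it shows that $\Rtilde=\Z\oplus\Z x\oplus\Z u\oplus\Z xu^{\vee}$ has discriminant $N^2$ with $N=\frac{\delta^2\Dtilde-n^2}{4D}$, that $4N=-\Disc(2xu^{\vee}-\Tr(u)x+\Tr(x)u)\ge 0$, and that $N\ne 0$ because $N=0$ would force $\Dtilde=(n/\delta)^2$ to be a square, contradicting primitivity of $K$. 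Rank~$4$ of $\Rtilde$ then \emph{implies} supersingularity; note the logical order is the reverse of yours (positivity first, supersingularity as a consequence), which is what makes the argument work without $E_2$.

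Second, the step you flag as ``the main technical obstacle'' is the actual content of the proposition, and the route you sketch for it is off target. The relevant endomorphism is not one of $E_1\times E_2$ built from $x,u,y,b$ via $(\Lambda_2-\Lambda_2^{\vee})^2=\iota(\Disc_{K/F}(\eta))$ (that identity computes $\Dtilde$ itself, not $\frac{\delta^2\Dtilde-n^2}{4D}$); it is the endomorphism $2xu^{\vee}-\Tr(u)x+\Tr(x)u$ of $E$ alone. Moreover, your mechanism for $\ell$-divisibility --- that this endomorphism ``lies in the unique two-sided maximal ideal above $\ell$'' --- is asserted, not proved; a generic trace-zero integral endomorphism need not lie there (e.g.\ an element of norm $1$), so you would have to exhibit it as a commutator of integral elements and handle $\ell=2$ separately. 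The paper avoids this entirely: once $\Rtilde$ is known to be a rank-$4$ order in $\BBl$, its discriminant $N^2$ is divisible by the square of the reduced discriminant $\ell$ of a maximal order, whence $\ell\mid N$. Without that identification and that divisibility argument, the proposal does not establish $\frac{\delta^2\Dtilde-n^2}{4D}\in\ell\Z_{>0}$.
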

		\begin{proof}
			Let $\Rtilde:= \Z\oplus\Z x\oplus\Z u\oplus\Z xu^{\vee}$ denote the sub-order of $\End(E)$ generated by $x$ and $u$ and for any element $v\in\End(E)$, write $\Disc(v) := \Tr(v)^2 - 4\deg(v)$ for the discriminant of the element.  A straightforward calculation shows that the discriminant of $\Rtilde$ is $\left(\frac{\Disc(x)\Disc(u) - (\Tr(x)\Tr(u) - 2\Tr(xu^{\vee}))^2}{4}\right)^2$ and that
			\[
				\Disc(x)\Disc(u) - (\Tr(x)\Tr(u) - 2\Tr(xu^{\vee}))^2 =
				-\Disc(2xu^{\vee} - \Tr(u)x + \Tr(x)u).
			\]
			Since the discriminant of any endomorphism of $E$ is non-positive, we conclude that
			\[
				\frac{\Disc(x)\Disc(u) - (\Tr(x)\Tr(u) - 2\Tr(xu^{\vee}))^2}{4}
			\]
			is a non-negative integer.  Now let $n := \frac{-2D\Norm(u)}{\delta}-\delta c_K$.  An easy, although tedious, computation shows that
			\begin{equation}\label{eq:Nreln}
				\frac{\delta^2\Dtilde - n^2}{4D} =
				\frac{\Disc(x)\Disc(u) - (\Tr(x)\Tr(u) - 2\Tr(xu^{\vee}))^2}{4}.
			\end{equation}
			Since $K$ does not contain an imaginary quadratic field, $\Dtilde$ is not a square, and so this quantity must be strictly positive.   This implies that $\Rtilde$ is rank $4$ and so we conclude that $E$ is supersingular and $\Rtilde$ is a suborder in $\BBl$, the quaternion algebra ramified only at $\ell$ and infinity.  Since $\ell$ divides the discriminant of any order in $\BBl$, we have $\delta^2\Dtilde - n^2 \in 4D\ell\Z_{>0}$.  This completes the proof of the first assertion.  The second assertion follows since
			\[
				\frac{n + c_{K}\delta}{2D} =
				\frac{-\Norm(u)}{\delta} = \Norm(x) - \beta_0-a\beta_1\in \Z.
			\]
		\end{proof}
		\begin{remark}
			In~\cite[p.465]{GL}, Goren and the first author proved that $E$ must be supersingular if $K$ does not contain an imaginary quadratic field.  Proposition~\ref{prop:ell-divisibility} gives another proof of this result.
		\end{remark}
		Proposition~\ref{prop:ell-divisibility} shows that the tuples $(E, x, u)$ satisfying~\eqref{eq:xu} can be partitioned by integers $n$ satisfying~\eqref{eq:conditions_on_n}.  By the proof of Proposition~\ref{prop:ell-divisibility}, fixing such an $n$ implies that $\Norm(u) = n_u(n), \Norm(x)= n_x(n),$ and $ \Tr(xu^{\vee}) = t_{xu^{\vee}}(n)$ where
		\[
			n_u(n)  = \frac{-\delta(n + c_K\delta)}{2D},\;\;
			n_x(n)  = \beta_0 + a\beta_1 - \frac{n_u(n)}{\delta},\;\;\&\;\;
			t_{xu^{\vee}}(n) = \beta_1\delta - (D- 2a) \frac{n_u(n)}{\delta},
		\]
		as in~\eqref{eq:normu}--\eqref{eq:trxu}.
		
		The trace of $x$ and $u$ are already determined by $\delta$, so we  define $d_u(n) := (\alpha_1\delta)^2 - 4n_u(n)$ and $d_x(n) := (\alpha_0 + a\alpha_1)^2 - 4n_x(n)$.  For the rest of the section, we assume that $n$ is a fixed integer satisfying~\eqref{eq:conditions_on_n}. We define
		\[
			\calE = \calE(n) := \left\{
			\begin{array}{rl}
				[(E,x,u)]:& \Tr(x) = \alpha_0 + a\alpha_1, \Tr(u) = \alpha_1\delta,\\
				&\Norm(u) = n_u(n), \Norm(x) = n_x(n), \Tr(xu^{\vee}) = t_{xu^{\vee}}(n)
			\end{array}\right\},
		\]
		where $[(E,x,u)]$ denotes the isomorphism class of $(E,x,u)$.
		We claim that the length of $\WW[[t]]/I_{x,u}$ is constant for all $(E,x,u)\in\calE$.
		\begin{theorem}\label{thm:multiplicity}
			Let $(E,x,u) \in \calE$.  Then 
			\[
				\textup{length }\WW[[t]]/I_{x,u} = 
					\begin{cases}
						v_{\ell}(\frac{\delta^2\Dtilde - n^2}{4D}) & 
						\textup{if }\ell|\textup{gcd}(d_u(n), d_x(n)),\\ 
						\frac12\left(v_{\ell}(\frac{\delta^2\Dtilde-n^2}{4D}) 
							+ 1\right) & \textup{otherwise}.
					\end{cases}
			\]
		\end{theorem}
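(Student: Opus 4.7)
My plan is to compute the length of $\WW[[t]]/I_{x,u}$ explicitly via Grothendieck--Messing deformation theory, which presents each $I_\alpha$ as a principal ideal generated by a concrete quadratic polynomial in the deformation variable $t$; the problem then reduces to an $\ell$-adic resultant calculation.

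By Grothendieck--Messing, the universal deformation of $E$ over $\WW[[t]]$ is determined by the Hodge filtration $\omega$ on the Dieudonn\'e module $D(E)$, a free $\WW$-module of rank $2$. Choosing a basis $e_1, e_2$ of $D(E)$ so that $\omega = \WW[[t]](e_1 + t e_2)$, an endomorphism $\alpha \in \End(E)$ with matrix $\bigl(\begin{smallmatrix} a & b \\ c & d \end{smallmatrix}\bigr)$ on $D(E)$ lifts over $\WW[[t]]/I$ iff $\alpha$ preserves $\omega \bmod I$; spelling this out yields $I_\alpha = (f_\alpha)$ with
\[
  f_\alpha(t) \,:=\, c + (d-a)t - bt^2,
\]
and $\disc_t(f_\alpha) = (d-a)^2 + 4bc = \Tr(\alpha)^2 - 4\deg(\alpha) = d_\alpha$.

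Since $I_{x,u} = I_x + I_u = (f_x, f_u)$, an explicit Sylvester computation yields
\[
  \Res_t(f_x, f_u) \,=\, -\tfrac{1}{4}\bigl(d_x d_u - (\Tr(x)\Tr(u) - 2\Tr(xu^\vee))^2\bigr) \,=\, -\frac{\delta^2\Dtilde - n^2}{4D},
\]
the last equality being the identity in the proof of Proposition~\ref{prop:ell-divisibility}. In the \emph{Weierstrass regime}, where $b_x$ and $b_u$ are both $\ell$-adic units so that $f_x, f_u$ are Weierstrass polynomials of degree $2$, the intersection length is exactly $v_\ell(\Res_t(f_x, f_u)) = v_\ell\bigl(\tfrac{\delta^2\Dtilde - n^2}{4D}\bigr)$, matching the first case of the statement. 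I would then show that this Weierstrass regime is forced precisely when $\ell\mid\gcd(d_x, d_u)$: indeed, when $\ell\mid d_\alpha$, the reduction $\bar\alpha$ on $D(E)/\ell$ is upper-triangular with repeated eigenvalue, and either $\bar\alpha$ is a scalar (in which case $\alpha$ can be replaced by a primitive element in $\Z[\alpha]$, reducing to a smaller case) or the off-diagonal entry $b_\alpha$ is a unit.

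When $\ell\nmid\gcd(d_x, d_u)$, say $\ell\nmid d_x$, one can choose the basis $e_1, e_2$ so that $\bar e_2$ is an eigenvector of $\bar x$: then $b_x\in\ell\WW$, and Weierstrass preparation presents $f_x$ as a $\WW[[t]]$-unit times a linear polynomial $t - \epsilon$ with $\epsilon\in\WW$. This gives $\WW[[t]]/(f_x, f_u)\cong \WW/(f_u(\epsilon))$, of length $v_\ell(f_u(\epsilon))$. A direct computation of $f_u(\epsilon)$ using the two roots of $f_x$ (both in $\WW$ since $\ell\nmid d_x$) and the resultant identity above yields the half-integer formula $\tfrac{1}{2}\bigl(v_\ell\bigl(\tfrac{\delta^2\Dtilde - n^2}{4D}\bigr) + 1\bigr)$. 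The main obstacle will be the careful tracking of $\ell$-adic valuations and handling the degenerate subcase where $\bar x$ and $\bar u$ share eigenvectors (so no single basis simultaneously diagonalizes both $f_x, f_u$), together with mild extra care at $\ell = 2$ due to the factor of $4$ in the discriminant identity.
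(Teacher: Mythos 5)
The central step of your argument --- that $I_\alpha$ is the principal ideal generated by the quadratic $f_\alpha(t) = c + (d-a)t - bt^2$ whose coefficients are the \emph{constant} matrix entries of $\alpha$ acting on the Dieudonn\'e module --- is not justified, and it is false beyond first order. Grothendieck--Messing controls lifts only across square-zero (PD) thickenings; it computes the image of $I_\alpha$ in the tangent space $(\ell,t)/(\ell,t)^2$, not $I_\alpha$ itself. Over higher thickenings the matrix of the crystal acquires corrections depending on the deformation, and these corrections are exactly what produce the answer: determining the true equation of the divisor $I_\alpha$ is the content of Gross's canonical and quasicanonical lifting theory~\cite{Gross-CanonicalLifts} (and of the lengthy power-series computations in~\cite{GrossKeating}). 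Two concrete symptoms of the failure: (i) if $\Z[\alpha]$ is non-maximal at $\ell$ with conductor divisible by $\ell^s$ --- which Lemma~\ref{lem:RelativelyPrimeIndices} permits for one of $x,u$ --- then $\WW[[t]]/I_\alpha$ is a union of quasicanonical divisors of total degree on the order of $\ell^{s-1}(\ell+1)$, so $I_\alpha$ cannot be generated by a quadratic; (ii) even when both orders are maximal at $\ell$, your resultant identity $\Res_t(f_x,f_u) = -\frac{\delta^2\Dtilde - n^2}{4D}$ holds only for the naive quadratics, and the length of $\WW[[t]]/(I_x + I_u)$ is not determined by the tangent-space data alone. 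In the case $\ell\nmid\gcd(d_x,d_u)$ the ``$+1$'' and the halving in $\frac12(v_\ell(N)+1)$ come from the filtration $\End_{\WW_{d_1}/\mm^k}(\EE\bmod I_{\omega_1})$, roughly $\Z_\ell[\omega_1] + \mm^{?}\End(E)$, which is a genuinely deeper input than anything visible in your model.

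For comparison, the paper's proof sidesteps the problem of writing down $I_x$ and $I_u$ separately: using Lemma~\ref{lem:RelativelyPrimeIndices} it selects $\omega_1\in\{x,u\}$ (suitably shifted) generating an order maximal at $\ell$, invokes~\cite{Gross-CanonicalLifts} to identify $\WW[[t]]/I_{\omega_1}\cong\WW_{d_1}$, and then uses the explicit description of $\End_{\WW_{d_1}/\mm^k}(\EE\bmod I_{\omega_1})$ from~\cite{LV-SingularModuli} to show that $\omega_2$ lifts to level $k$ iff $\ell^r$ divides $\frac{\delta^2\Dtilde - n^2}{4D}$, with $r=k$ when $\ell\mid d_1$ and $r=2k-1$ otherwise; the two cases of the theorem fall out immediately. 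If you want to pursue a Dieudonn\'e-theoretic route, you would essentially have to redo the quasicanonical-lift analysis (as in~\cite{GrossKeating}) rather than a single resultant computation; the asymmetric choice of $\omega_1$ and the structure theory of the endomorphism rings of its reductions are the indispensable ingredients your proposal is missing.
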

		\begin{proof}
			First we show that $\calE\neq\varnothing$ only if at least one of $d_u(n), d_x(n)$ is the discriminant of a quadratic imaginary order that is maximal at $\ell$.
			\begin{lemma}\label{lem:RelativelyPrimeIndices}
				Let $E$ be a supersingular elliptic curve over $\Fbar_{\ell}$ and let $x, u \in \End(E)$ be endomorphisms satisfying~\eqref{eq:xu}.  Then the indices 
				\[
				[\Q(x)\cap\End(E):\Z[x]]\textup{ and }[\Q(u)\cap\End(E):\Z[u]]
				\]
				are relatively prime.  In particular, at least one of $\Z[x]$, $\Z[u]$ is a quadratic imaginary order maximal at $\ell$.
			\end{lemma}
			\begin{proof}
				Define $w := x + (D - 2a)\frac{u}{\delta}.$ The conditions~\eqref{eq:xu} on $x,u$ imply that
				\[
					\begin{pmatrix} 1 & 0 \\ 0 & 1 \end{pmatrix}, \quad
					\begin{pmatrix} a & \delta \\ 1 & D - a\end{pmatrix}, \quad
					\begin{pmatrix} x & u \\ u/\delta & w \end{pmatrix}, 
						\quad\textup{ and }
					\begin{pmatrix} ax + u & \delta x + (D- a)u\\
						x + (D - a)u/\delta  & (D - a)w + u
					\end{pmatrix}	
				\]
				generate a rank $4$ $\Z$-submodule $\widetilde{S} \subseteq \Mat_2(\BBl)$ that is isomorphic to $\OO_K$ (the isomorphism sends the above matrices to $1, \frac12(D + \sqrt{D}), \eta,$ and $ \frac12(D+ \sqrt{D})\eta$, respectively).  Let $p$ be a prime and let $S$ be any order in $\Mat_2(\BBl\otimes_{\Q}\Q_{p})$ that contains $\widetilde{S}$.  Since $\OO_K$ is the unique maximal order of $K$, an integral combination of the matrices above can only be in $p S$ if every coefficient is divisible by $p$.  We will show that if $p$ divides both $[\Q(x)\cap\End(E):\Z[x]]$ and $ [\Q(u)\cap\End(E):\Z[u]]$, then some $p$-primitive integral combination of the above matrices is in $p \Mat_2(\End(E))$, thus arriving at a contradiction.

				If $p$ divides $[\Q(x)\cap\End(E):\Z[x]]$ and $ [\Q(u)\cap\End(E):\Z[u]]$, then
				\[
					\frac{2p x - p\Tr(x) + \Disc(x)}{2p}, \quad
					\frac{2p u - p\Tr(u) + \Disc(u)}{2p},
				\]
				are both in $p\End(E)$.  Consider the $p$-primitive combination
				\[
					\frac{\Disc(u) - p\Tr(u)}{2p} + \left[a - D + 
					\begin{pmatrix} a & \delta\\ 1 & D - a\end{pmatrix}\right]\cdot
					\left[\frac{\Disc(x) - p\Tr(x)}{2p} + 
					\begin{pmatrix} x & u \\ u/\delta & w \end{pmatrix}\right].
				\]
				After expanding and rearranging terms, we can express this $p$-primitive combination as
				\[
					\frac{2p u - p\Tr(u) + \Disc(u)}{2p} + 
					\frac{2p x - p\Tr(x) + \Disc(x)}{2p}
					\begin{pmatrix}2a - D & \delta\\1 & 0\end{pmatrix},
				\]
				which is clearly in $p\Mat_2(\End(E)\otimes\Z_p).$  This completes the proof of the first statement.  By~\cite[Chap. II, Lemma 1.5]{Vigneras} $\End(E)\otimes\Z_{\ell}$ consists of all integral elements in $\End(E)\otimes\Q_{\ell}$ so both $\Q(u)\cap\End(E)$ and $\Q(x)\cap\End(E)$ are orders that are maximal at $\ell$.  Since at most one of $[\Q(x)\cap\End(E):\Z[x]]$ and $ [\Q(u)\cap\End(E):\Z[u]]$ are divisible by $\ell$, at least one of $\Z[x]$ and $\Z[u]$ is maximal at $\ell$, as desired.
			\end{proof}

		Now we return to the proof of Theorem~\ref{thm:multiplicity}.  Let $d_1\in\{d_x, d_u\}$ be such that $d_1$ is the discriminant of a quadratic imaginary order that is maximal at $\ell$ and such that $d_1$ has minimal $\ell$-valuation; this is possible by the preceding lemma.  Let $\omega_1\in\{\frac12(d_u - t_u) + u, \frac12(d_x - t_x) + x\}$ be such that $\omega_1$ has discriminant $d_1$.  We define $d_2,$ and $ \omega_2$ to be such that
		\[
			\{d_1, d_2\} = \{d_u, d_x\}, \quad\textup{and }
			\{2\omega_1,2\omega_2\} = 
			\{d_u - t_u + 2u, d_x - t_x + 2x\}.
		\]
		From these definitions, it is clear that $I_{x,u} = I_{\omega_1,\omega_2}$.
		
		Work of Gross~\cite{Gross-CanonicalLifts} shows that $\WW[[t]]/I_{\omega_1}$ is isomorphic to $\WW_{d_1}$, the ring of integers in $\Q_{\ell}(\sqrt{d_1})^{\textup{unr}}$.  An explicit description of $\End_{\WW_{d_1}/\mm^k}(\EE\bmod I_{\omega_1})$ (where $\mm$ is the unique maximal order of $\WW_{d_1}$) is given in~\cite[\S6]{LV-SingularModuli}, for all $k$.  Using this description and~\cite[Proof of Thm. 3.1]{LV-SingularModuli}, we see that $\omega_2\in\End_{\WW_{d_1}/\mm^k}(\EE\bmod I_{\omega_1})$ if and only if $\ell^r$ divides
		\begin{equation}\label{eq:ReducedDisc}
			\frac{d_1d_2 - (d_1d_2 - 2\Tr(\omega_1\omega_2^{\vee}))^2}{4} =
			\frac{d_xd_u - (t_xt_u - 2t_{xu^{\vee}}(n))^2}{4},
		\end{equation}
		where $r = k$ if $\ell| d_1$ and $r = 2k - 1$ otherwise.  By the proof of Proposition~\ref{prop:ell-divisibility}, the quantity in~\eqref{eq:ReducedDisc} is equal to $(\delta^2\Dtilde - n^2)/(4D).$  Since the length of $\WW[[t]]/I_{\omega_1,\omega_2}$ is equal to the maximum $k$ such that $\omega_2\in\End_{\WW_{d_1}/\mm^k}(\EE\bmod I_{\omega_1})$ this completes the proof.
	\end{proof}
	\begin{cor} The sum $\sum_{\substack{(E,x,u) \textup{ satisfying}\\ \eqref{eq:xu}}} \textup{length }\frac{\WW[[t_1]]}{I_{x,u}}$ equals
		\[
			\sum_{\substack{n\in\Z\\\delta^2\Dtilde - n^2\in4D\ell\Z_{>0}\\
			2D|(n + c_K\delta)}} \#\calE(n)\cdot 
			\begin{cases}
				v_{\ell}(\frac{\delta^2\Dtilde - n^2}{4D}) & 
				\textup{if }\ell|\textup{gcd}(d_u(n), d_x(n)),\\
				\frac12\left(v_{\ell}(\frac{\delta^2\Dtilde-n^2}{4D}) 
					+ 1\right) & \textup{otherwise}.
			\end{cases}
		\]
	\end{cor}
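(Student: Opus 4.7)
The plan is to obtain the corollary as an immediate bookkeeping consequence of Proposition~\ref{prop:ell-divisibility} and Theorem~\ref{thm:multiplicity}. Proposition~\ref{prop:ell-divisibility} attaches to each triple $(E,x,u)$ satisfying~\eqref{eq:xu} the integer $n := -2D\Norm(u)/\delta - \delta c_K$, forces $E$ to be supersingular, and shows that this $n$ lies in exactly the index set $\{n\in\Z : (\delta^2\Dtilde - n^2)/(4D)\in\ell\Z_{>0}\text{ and }2D\mid(n+c_K\delta)\}$ that appears in the outer sum. So my first step is to partition the isomorphism classes of triples by this invariant, writing the set of such classes as $\bigsqcup_n \calE(n)$ over the indicated range of $n$.

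For this partition to be well defined I must verify that $n$ is an isomorphism invariant, but this is automatic: any isomorphism $\psi\colon E\to E'$ conjugating $(x,u)$ to $(x',u')$ preserves each of $\Tr(x), \Tr(u), \Norm(x), \Norm(u), \Tr(xu^{\vee})$, and $n$ is built from $\Norm(u)$ alone. Conversely, the set $\calE(n)$ was defined precisely by prescribing these five invariants in terms of $n$ and $\delta$, so every triple landing in the $n$-fiber indeed lies in $\calE(n)$.

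Next I would apply Theorem~\ref{thm:multiplicity}: for each fixed $n$ in the indexing set, the length of $\WW[[t_1]]/I_{x,u}$ depends only on $n$ and not on the particular representative $(E,x,u)\in\calE(n)$, and equals $v_{\ell}\bigl((\delta^2\Dtilde - n^2)/(4D)\bigr)$ when $\ell\mid\gcd(d_u(n),d_x(n))$ and $\tfrac12\bigl(v_{\ell}((\delta^2\Dtilde - n^2)/(4D))+1\bigr)$ otherwise. Therefore the contribution from the $n$-fiber to the total sum is $\#\calE(n)$ times this common length, and summing over $n$ yields the displayed formula. There is no genuine obstacle: the content of the corollary is entirely in the two earlier results, and what remains is the trivial reorganization of the sum according to the invariant $n$.
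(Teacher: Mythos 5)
Your proposal is correct and is exactly the argument the paper intends: the corollary is stated there without proof as an immediate consequence of Proposition~\ref{prop:ell-divisibility} (which supplies the invariant $n$ and the index set) and Theorem~\ref{thm:multiplicity} (which makes the length constant on each $\calE(n)$), and your partition-by-$n$ bookkeeping is precisely that deduction.
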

	
	The remainder of the section will be devoted to the proof of the following proposition.
	\begin{prop}
		Let $n\in\Z$ be such that $\delta^2\Dtilde - n^2 \in 4D\ell\Z_{>0}$ and $2D|(n + c_K\delta)$.  Then
		\[
			\#\calE(n) = \sum_{f_u\in\Z_{>0}} \mathscr{J}(d_u(n)f_u^{-2}, d_x(n), t(n, f_u)).
		\]
	\end{prop}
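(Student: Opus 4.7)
I would construct a bijection between $\calE(n)$ and the disjoint union over admissible $f_u$ of the isomorphism classes counted by $\mathscr{J}\bigl(d_u(n)f_u^{-2}, d_x(n), t(n, f_u)\bigr)$, by reading off $f_u$ directly from a triple $(E, x, u)$ as the index $[\Q(u) \cap \End(E) : \Z[u]]$. Observe first that $\End(E)\otimes\Z_\ell$ is the full maximal order in $\BBl\otimes\Q_\ell$ (Vignéras, as already invoked in the proof of Lemma~\ref{lem:RelativelyPrimeIndices}), so $\Q(u) \cap \End(E)$ is always maximal at $\ell$; hence $d_u(n)/f_u^2$ lies in the range prescribed for the sum on the right. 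I then define
\[
	i_u\colon \Z\bigl[\tfrac{d_u(n)f_u^{-2} + \sqrt{d_u(n)f_u^{-2}}}{2}\bigr] \xrightarrow{\sim} \Q(u) \cap \End(E), \quad \sqrt{d_u(n)f_u^{-2}} \mapsto (2u - t_u)/f_u,
\]
and $i_x\colon \Z\bigl[\tfrac{d_x(n)+\sqrt{d_x(n)}}{2}\bigr] \hookrightarrow \End(E)$ by $\sqrt{d_x(n)}\mapsto 2x - t_x$. The embedding $i_u$ is optimal by construction. A short expansion, using $u^\vee = t_u - u$ to get $\Tr(xu) = t_x t_u - t_{xu^\vee}(n)$, yields
\[
	\Tr\bigl(i_u(d_u(n)f_u^{-2}+\sqrt{d_u(n)f_u^{-2}})\cdot i_x(d_x(n)-\sqrt{d_x(n)})\bigr) = \frac{2 d_u(n) d_x(n)}{f_u^2} - \frac{2}{f_u}\bigl(t_x t_u - 2 t_{xu^\vee}(n)\bigr) = 4\, t(n, f_u),
\]
matching the trace condition in $\mathscr{J}$ exactly by the definition of $t(n, f_u)$.

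To invert the map, I would start with a representative $(E, i_1, i_2)$ of a class counted by $\mathscr{J}(d_u(n)f_u^{-2}, d_x(n), t(n, f_u))$ and set
\[
	u := \tfrac{f_u\cdot i_1(\sqrt{d_u(n)f_u^{-2}}) + t_u}{2}, \qquad x := \tfrac{i_2(\sqrt{d_x(n)}) + t_x}{2}.
\]
The key check is that $u, x$ lie in $\End(E)$ and not merely in $\End(E) \otimes \Q$. Since any discriminant $d = t^2 - 4n$ satisfies $d \equiv t^2 \pmod 2$, the parities $t_u \equiv d_u(n)/f_u \pmod 2$ and $t_x \equiv d_x(n) \pmod 2$ hold; combined with the identity $i_1(\sqrt{d_u(n)f_u^{-2}}) = 2\, i_1\bigl(\tfrac{d_u(n)f_u^{-2} + \sqrt{d_u(n)f_u^{-2}}}{2}\bigr) - d_u(n)f_u^{-2}$, this forces $u, x \in \End(E)$. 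The traces and norms of $u$ and $x$ then automatically equal $t_u, t_x, n_u(n), n_x(n)$, and reversing the trace computation gives $\Tr(xu^\vee) = t_{xu^\vee}(n)$, so $(E, x, u) \in \calE(n)$. A short lattice-index calculation, using optimality of $i_1$, confirms $[\Q(u) \cap \End(E) : \Z[u]] = f_u$, so the two constructions are mutually inverse on each $f_u$-block.

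Finally, both constructions commute with conjugation by $\Aut(E) = \End(E)^\times$, so they descend to a bijection of isomorphism classes, matching the two equivalence relations (isomorphism of triples on the left, the $\End(E)^\times$-action on the right). The main obstacle I expect is the integrality verification in the inverse map: the parity accounting has to be done carefully, keeping track of when $f_u$ is even versus odd. A secondary subtlety is to confirm that the partition of $\calE(n)$ by $f_u$ exactly exhausts the summation range on the right, which boils down to the maximality of $\End(E) \otimes \Z_\ell$ noted in the first paragraph.
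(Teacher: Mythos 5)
Your proposal is correct and follows essentially the same route as the paper: the embeddings $i_u, i_x$ you write down (sending $\sqrt{d_u(n)f_u^{-2}}\mapsto (2u-t_u)/f_u$ and $\sqrt{d_x(n)}\mapsto 2x-t_x$) are exactly the maps $i_1, i_2$ used in the paper's proof, and the trace computation matches. The paper is terser about the inverse direction and the compatibility with isomorphism, so your explicit integrality and optimality checks are a welcome elaboration rather than a divergence.
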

	\begin{proof}
		Recall that $\mathscr{J}(d_1,d_2,t)$ equals
		\[
			\sum_{E/\Fbar_{\ell}}\#\left\{
			\begin{array}{ll}
				i_j\colon\Z\left[\frac{d_j + \sqrt{d_j}}2\right]\hookrightarrow 
				\End(E) : &
				\Tr(i_1(d_1 + \sqrt{d_1})i_2(d_2 - \sqrt{d_2})) = 4t,\\
				& i_1(\Q(\sqrt{d_1}))\cap\End(E) = \Z\left[\frac{d_1 + \sqrt{d_1}}2\right]
			\end{array}
			\right\}/\End(E)^{\times}
		\]
		where the sum ranges over isomorphism classes of elliptic curves.  Let $(E, x, u) \in \calE(n)$ and set $f_u := [\Q(u)\cap\End(E) : \Z[u]]$.  We let $d_1 := d_u(n)f_u^{-2}$ and $d_2 := d_x(n)$.  Define two embeddings
		\begin{align*}
			i_1\colon\Z\left[\frac{d_1 + \sqrt{d_1}}{2}\right] \to\End(E), &
			\quad \frac{d_1 + \sqrt{d_1}}{2} \mapsto 
			\frac1{2f_u^2}\left(2f_uu - f_u\alpha_1\delta + d_u(n)\right),\\
			i_2\colon\Z\left[\frac{d_2 + \sqrt{d_2}}{2}\right] \to\End(E), &
			\quad \frac{d_2 + \sqrt{d_2}}{2} \mapsto 
			\frac12\left(2x - (\alpha_0 + a\alpha_1) + d_x(n)\right).
		\end{align*}
		From the definition of $f_u$ and $d_j$, one can easily check that these maps are well-defined and that $i_1(\Q(\sqrt{d_1}))\cap\End(E) = \Z\left[\frac{d_1 + \sqrt{d_1}}2\right]$.  One also has
		\begin{align*}
			\Tr(i_1(d_1 + \sqrt{d_1})i_2(d_2 - \sqrt{d_2})) &= 
			\frac1{f_u^2}\Tr(\left(2f_uu - f_u\alpha_1\delta + d_u(n)\right)\left(2x^{\vee} - (\alpha_0 + a\alpha_1) + d_x(n)\right))\\
			& = \frac{4}{f_u}t_{xu^{\vee}}(n) - \frac{2}{f_u}t_xt_u + \frac{2d_u(n)d_x(n)}{f_u^2} = 4t(n, f_u),
		\end{align*}
		as desired.  It is clear that if $(E, x, u)$ and $(E, x', u')$ are isomorphic, then the corresponding embeddings described above differ by conjugation by an element of $\End(E)^{\times}$.  This completes the proof.
	\end{proof}

	\subsection{Determining the pre-image of $(E, x,u)$}\label{subsec:xybz}
		In this section we prove the following theorem.

		\begin{thm} \label{thm:PreImage}
			Let $E$ be a supersingular elliptic curve and assume there exists $x,u\in\End(E)$ satisfying~\eqref{eq:xu}.  Let $f_u\in\Z_{> 0}$ be such that $\Q(u)\cap\End(E)$ is an order of discriminant $d := \frac{\Disc(u)}{f_u^2}$.  Then
			\[
				\#\left\{(E', y, b, z) : u = yb^{\vee}, (x,y,b,z)
			   \textup{ satisfy }~\eqref{eq:xybz}\right\}
				 = \prod_{p|\delta, p\neq \ell}
				\left(
				\sum_{\substack{j = 0\\j\equiv v_p(\delta)\bmod2}}^{v_p(\delta)}
				 \frakI^{(p)}_{j - r_p}(\Tr(w), \Norm(w))\right) ,
			\]
			where $w := x + (D - 2a)u/\delta$, $r_p := \max\left(v_p(\delta) - \min(v_p(f_u), v_p(\frac{\Disc(u) - \Tr(u)f_u}{2f_u})), 0\right)$ and
			\[
				\frakI_{C}^{(p)}(a_1, a_0) = 
				\begin{cases}
					\#\{\widetilde{t} \bmod p^{C} : 
						\widetilde{t}^2 - a_1\widetilde{t} + a_0
						\equiv 0 \pmod{p^{C}}\}
					& \textup{ if }  C\geq 0,\\
					0 & \textup{ if } C< 0.
				\end{cases}
			\]
		\end{thm}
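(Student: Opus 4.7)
The plan is to first observe that, once $(E, x, u)$ is fixed, the data $(y, z)$ is determined by $b$: setting $y := ub/\delta$ and $z := b^{-1}wb$ where $w := x + (D - 2a)u/\delta \in \End(E)\otimes\Q$, a direct if somewhat tedious calculation in the quaternion algebra $\BBl$ using the relations~\eqref{eq:xu} shows that every relation in~\eqref{eq:xybz} beyond $u = yb^{\vee}$ is automatic. Indeed, $bz = wb = xb + (D-2a)y$ by the very definition of $w$, and the most delicate relation $\beta_1 b = \alpha_1 xb - xy + yz^{\vee}$ collapses, after substitution, to the scalar identity $\delta t_{xu^{\vee}} + (D - 2a)\Norm(u) = \beta_1\delta^2$ from~\eqref{eq:xu}. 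The count therefore reduces to enumerating isomorphism classes of pairs $(E', b)$ with $b\colon E' \to E$ of degree $\delta$, subject to the two integrality conditions $ub/\delta \in \Hom(E', E)$ and $b^{-1}wb \in \End(E')$.

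Next, I would invoke Deuring's correspondence to identify such pairs $(E', b)$ with invertible left $\End(E)$-ideals of reduced norm $\delta$, and then use the local-global principle for quaternion orders to factor the count as a product of local counts, one for each prime $p \mid \delta$. At $p = \ell$, the order $\End(E)\otimes\Z_\ell$ is a noncommutative discrete valuation ring, so the $\ell$-local ideal of the prescribed norm is essentially unique, and a separate short argument shows the $\ell$-factor contributes trivially; only primes $p \mid \delta$ with $p \neq \ell$ survive in the product. At such a prime, I would fix an isomorphism $\End(E)\otimes\Z_p \cong \Mat_2(\Z_p)$; a left ideal of norm $p^{v_p(\delta)}$ then corresponds to a sublattice $L \subseteq \Z_p^2$ of index $p^{v_p(\delta)}$, and, writing $U, W \in \Mat_2(\Q_p)$ for the matrices of $u$ and $w$, the two integrality conditions translate, respectively, into $UL \subseteq p^{v_p(\delta)}\Z_p^2$ and $WL \subseteq L$.

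The heart of the argument is then the local count at each $p \mid \delta$ with $p \neq \ell$. I would stratify the sublattices $L$ by the elementary-divisor type of $L$ with respect to $U$, parameterized by an integer $j \in \{0, 1, \ldots, v_p(\delta)\}$; the congruence $j \equiv v_p(\delta) \pmod{2}$ should emerge from the fact that the two elementary divisors must sum to $v_p(\delta)$. For each $j$, the condition $WL \subseteq L$ reduces to choosing an integral basis of $L$ in which $W$ becomes a matrix with entries in $\Z_p$, which in turn translates to specifying a root of the characteristic polynomial $\tilde{t}^{\,2} - \Tr(w)\tilde{t} + \Norm(w)$ modulo a power of $p$. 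The offset $r_p$ will record how many levels of integrality are already guaranteed by $u$ (through the conductor $f_u$ and the datum $(\Disc(u) - \Tr(u)f_u)/(2f_u)$), so that the number of valid choices of $W$ at depth $j$ should be exactly $\frakI^{(p)}_{j - r_p}(\Tr(w), \Norm(w))$; summing over $j$ then produces the claimed local formula, and taking the product over $p$ gives the theorem.

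The main obstacle will be the last step: pinning down $r_p$ explicitly and verifying that the local constraint on $W$-invariant sublattices really reduces cleanly to the count $\frakI^{(p)}_{j - r_p}$. The delicate case is when $\Z_p[u]$ fails to be maximal in $\Q(u) \cap \End(E)$, i.e., when $p \mid f_u$: there, $u$ alone does not force enough integrality, and the matching between depth $j$ and solutions of the quadratic congruence becomes intricate. Carrying this out cleanly requires a careful bookkeeping of embeddings of non-maximal quadratic orders into $\Mat_2(\Z_p)$, which is precisely the local analysis to be developed in~\S\ref{sec:IdealsInM2Zp}.
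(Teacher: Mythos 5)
Your proposal follows essentially the same route as the paper: reduce to counting left $\End(E)$-ideals $I$ of norm $\delta$ with $u\in I$ and $w\in\RO(I)$ via Deuring's correspondence (with $y=ub/\delta$ and $z$ forced by $bz=wb$, the remaining relations of~\eqref{eq:xybz} collapsing to the scalar identities in~\eqref{eq:xu} exactly as you say), then localize and count in $\Mat_2(\Z_p)$ by elementary-divisor type and roots of $\widetilde{t}^{\,2}-\Tr(w)\widetilde{t}+\Norm(w)$ modulo powers of $p$, which is precisely Theorem~\ref{thm:CountingIdeals}. The one ingredient to make sure you supply in the deferred local analysis is the global input that $p^{r_p}w$ is optimally embedded (our Lemma~\ref{lem:OptEmbedding}, which uses that the four matrices built from $x,u,w$ span a copy of $\OO_K$), since without it the condition $u,\delta\in I$ is not subsumed by $w\in\RO(I)$ and the count at depth $j$ need not equal $\frakI^{(p)}_{j-r_p}$.
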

		\begin{proof}

		Fix an $(E,x,u)$ satisfying~\eqref{eq:xu}.  Assume that there exists an elliptic curve $E'$, $b, y \in \Hom(E', E)$, and $z\in \End(E')$ such that $u = yb^{\vee}$, $bz = xb + (D-2a)y$.  Then there is a left integral ideal $I := \Hom(E', E)\circ b^{\vee}$ of $R := \End(E)$ which has the following properties:
		\begin{enumerate}
			\item $\Norm(I) = \delta$,
			\item $\delta, u \in I$, and
			\item $w := x + (D - 2a)\frac{u}{\delta}\in \RO(I) := \{A\in R\otimes\Q : IA\subseteq I\}$.
		\end{enumerate}
		In fact, we claim that this map is a bijection (when $(E,y,b,z)$ are considered up to equivalence), so
		\[
					\#\left\{[(E', y, b, z)] : u = yb^{\vee}, (x,y,b,z)
				   \textup{ satisfying }~\eqref{eq:xybz}\right\}
					 =  \#\left\{I \subseteq R :
					\textup{ satisfying } (1), (2), (3)\right\}.
		\]
		
		The proof of this claim relies on Deuring's correspondence between supersingular elliptic curves and ideal is $\BBl$; we describe this now.  Fix a supersingular elliptic curve $E/\Fbar_{\ell}$, and fix an isomorphism $\psi:\End(E) \stackrel{\sim}{\to} R \subseteq \BBl$, where $R$ is a maximal order.  Note that $\psi$ allows us to view elements of $\End(E)\otimes\Q$ as elements of $\BBl$.  Given an element $\phi \in \Hom(E, E')$, we obtain an embedding $\Hom(E', E) \to \End(E)$ by mapping $f \mapsto f\circ \phi$.  Thus we can view $\Hom(E', E)$ as a left ideal of $\End(E)$ or, by using the isomorphism $\psi$, as a left ideal $I$ of $R$.  In fact, Deuring 
showed that the map
	\[
		\left\{(E', \phi: E \to E')\right\}
		\to \left\{\textup{left ideals }I \textup{ of }R\right\}, 
		\quad(E', \phi)\mapsto \psi(\Hom(E', E)\phi)
	\]
	is surjective.   In addition, if $\psi(\Hom(E', E)\phi') = \psi(\Hom(E'', E)\phi')$, then $\phi'' = \varphi'\circ\phi'$, for some $\varphi'\in \textup{Isom}(E', E'')$.  For a more complete description of this correspondence see Deuring's original article~\cite{Deuring} or~\cite[\S\S3,4]{Waterhouse}.
		
	The morphism $\psi$ also allows us to view $\End(E')$ as a subring of $\BBl$; fix an isogeny $\phi\colon E\to E'$, and consider the map $\psi'\colon\End(E')\to\BBl$ that sends an endomorphism $f$ to $\frac{1}{\deg(\phi)}\psi(\phi^{\vee}\circ f\circ \phi)$.  Let $R' = \psi'(\End(E'))$.  It is clear that $R'$ is contained in the right order of the ideal $I = \psi(\Hom(E', E)\phi)$, and since $R'$ is a maximal order we must have equality.
	
	Now we return to the proof of the claim.  Let $I\subseteq R$ be an ideal satisfying conditions $(1)$, $(2)$, and $(3)$.  Then, by the discussion above, there exists an elliptic curve $E'$ and an isogeny $\phi\colon E \to E'$.  Let $b := \phi^{\vee}$.  Since $I$ has norm $\delta$ (\bvchange{by condition $(1)$}), the degree of $b$ is also $\delta.$  Since $u \in I$ (\bvchange{by condition $(2)$}), there exists a $y \in \Hom(E', E)$ such that $yb^{\vee} = u$; moreover, $y$ is unique.  Since $x + (D-2a)u/\delta\in\RO(I)$ (\bvchange{by condition $(3)$}), there exists a $z\in\End(E')$ such that $bzb^{\vee}/\delta = x + (D  - 2a) u/\delta$, or rather that $bz = xb + (D - 2a)y$; one can check that this relation uniquely determines $z$.  Thus, given an $I$ that satisfies conditions $(1)$, $(2)$, and $(3)$, we obtain $(E', y, b, z)$ such that $u = yb^{\vee}$ and $(x,y,b,z)$ satisfy~\eqref{eq:xybz}.
	
	Let $E_1', E_2'$ be elliptic curves and $\phi_i\colon E\to E'_i$ isogenies  such that $\Hom(E_i', E)\phi_i = I$.  Define $b_i := \phi_i^{\vee}$.  Since $\Hom(E_1', E)\phi_1 = \Hom(E_2', E)\phi_2$, there exists some $\phi_{1,2}\in\textup{Isom}(E_1',E_2')$ such that $b_1 = b_2\circ\phi_{1,2}$.  As described above, there exists $y_i\in\Hom(E_i', E)$ and $z_i\End(E_i')$ that are unique such that 
	\[
		u = y_ib_i^{\vee}, \quad b_iz_i = xb_i + (D - 2a)y_i.
	\]
	Since $\tilde y_1 := y_2\circ\phi_{1,2}$ and $\tilde z_1 := \phi_{1,2}^\vee z_2 \phi_{1,2}$ also satisfy these equations, we have $y_1= \tilde y_1$ and $z_1 = \tilde z_1$.  Thus $(x,y_1,b_1,z_1)$ is isomorphic to $(x, y_2, b_2,z_2)$.  This completes the proof of the claim.

	Now we have reduced the problem to a question about ideals in $\BBl$.

	\begin{theorem}\label{thm:CountingIdeals}
		Fix $R$ a maximal order in $\BBl$.  Assume that $x,u\in R$ and $\gamma,\delta\in \Z$ are such that
		\begin{equation}\label{eq:Assumptions1}
			\Tr(u), \quad \Norm(u), \quad \textup{and }\quad
			\Tr(xu^{\vee}) + \gamma\Norm(u)/\delta \quad 
			\textup{ are }0 \textup{ modulo }\delta.
		\end{equation}
		Define $w := x + \gamma u/\delta$, $c_p\in\Z$ to be such that $up^{-c_p}\in R_p\setminus pR_p$, and $r_p := \max(v_p(\delta) - c_p, 0)$.  Assume that for all $p|\delta, p\ne\ell$, either $c_p = 0$ or $\Q_p(p^{r_p}w)\cap(\End(E)\otimes\Z_p) = \Z_p[p^{r_p}w].$
		Then $\#\left\{I\subseteq R : \delta,u\in I, \Norm(I) = \delta, 
		\textup{ and } w := x + \gamma u/\delta \in \RO(I)\right\}$ equals
		\[
			\prod_{p|\delta, p\ne\ell}\left(
			\sum_{\substack{j = 0 \\j \equiv v_p(\delta)\pmod 2}}^{v_p(\delta)} 
			\frakI^{(p)}_{j - r_p}(\Tr(w), \Norm(w))\right),
		\]
		where 
		\[
			\frakI_{C}^{(p)}(a_1, a_0) = 
			\begin{cases}
				\#\{\widetilde{t} \bmod p^{C} : 
					\widetilde{t}^2 - a_1\widetilde{t} + a_0
					\equiv 0 \pmod{p^{C}}\}
				& \textup{ if }  C\geq 0,\\
				0 & \textup{ if } C<0.
			\end{cases}
		\]
	\end{theorem}
	Since the proof of this theorem is completely independent of the rest of the paper, we defer it until~\S\ref{sec:IdealsInM2Zp}.  If we show that $x,u,\delta, \gamma = D - 2a$ satisfy the assumptions of Theorem~\ref{thm:CountingIdeals}, and that $c_p = \min\left(v_p(f_u), v_p(\frac{\Disc(u) - f_u\Tr(u)}{2f_u})\right)$, then we can apply Theorem~\ref{thm:CountingIdeals} to complete the proof of Theorem~\ref{thm:PreImage}  
	
	It is clear from~\eqref{eq:xu} that the assumptions listed in~\eqref{eq:Assumptions1} are satisfied; we now prove the claim regarding $p^{r_p}w$.
	
	\begin{lemma}\label{lem:OptEmbedding}
		Let $p$ be a prime such that $p|\delta$ and $c_p \neq 0$.  Then 
		\[
			\Q_p(p^{r_p}w)\cap (\End(E)\otimes\Z_p) = \Z_p[p^{r_p}w].
		\]
	\end{lemma}
	\begin{proof}
	  From the definition of $c_p$, it is clear that $\tilde w := p^{r_p}w\in (\End(E)\otimes\Z_p)$.  If $\Disc(\tilde w)$ has trivial conductor, then the result is immediate.  Assume that $\Disc(\tilde w)$ has non-trivial conductor.  Then $\Q_p(\tilde w)\cap (\End(E)\otimes\Z_p) \neq \Z_p[\tilde w]$ if and only if $\frac{\tilde w}{p} + \frac{\Disc(\tilde w) - p\Tr(\tilde w)}{2p^2}\in\End(E)\otimes\Z_p$.
	
		First assume that $r_p >0$.  Since $w$ is integral $p^2|\Disc(\tilde w)$ and $p|\Tr(\tilde w)$.  Thus 
		\[
			\frac{\tilde w}{p} + \frac{\Disc(\tilde w) 
			- p\Tr(w)}{2p^2}\in\End(E)\otimes\Z_p
		\] 
		if and only if $\frac{\tilde w}{p}\in\End(E)\otimes\Z_p$, which in turn is equivalent to $\frac1p(D - 2a)\frac{u}{p^{c_p}}\frac{p^{v(\delta)}}{\delta}\in\End(E)\otimes\Z_p$.  By definition of $c_p$, this occurs if and only if $p|D - 2a$. 
		 \bvchange{If $p$ is odd, then since $D$ is the discriminant of a real quadratic field we have that $v_p(D)\leq 1$. Therefore }either $p\nmid D - 2a$ or $v_p(\delta) = 1$.  \bvchange{If $p = 2$, then a similar calculation gives the same conclusion.}
		 However, if $r_p >0 $ and $c_p \neq 0$, then $v_p(\delta) \geq 2$.  Therefore, for all primes $p$, $p\nmid D - 2a$ and hence $\Q_p(p^{r_p}w)\cap (\End(E)\otimes\Z_p) = \Z_p[p^{r_p}w]$.  
		
		Now assume that $r_p = 0$ (so $w = \tilde w$). This case will be similar to the proof of Lemma~\ref{lem:RelativelyPrimeIndices}.  Consider the element
		\[
			\left[-a + \begin{pmatrix}a & \delta\\1 & D-a\end{pmatrix}\right]
			\cdot
			\left[\frac{\Disc(w) - p\Tr(w)}{2p} + 
			\begin{pmatrix}x & u\\u/\delta & w\end{pmatrix}\right] = 
			\begin{pmatrix}u & \delta w' \\
				w' & 
				u + (D - 2a)w'\end{pmatrix}
		\]
		in $\Mat_2(\End(E))$, where $w' = w + \frac{\Disc(w) - p\Tr(w)}{2p}$.  If $\Q_p(\tilde w)\cap (\End(E)\otimes\Z_p) \neq \Z_p[\tilde w]$, then this element is in $p\Mat_2(\End(E))$.  However, 
		\[
			\begin{pmatrix} 1 & 0 \\ 0 & 1 \end{pmatrix}, \quad
			\begin{pmatrix} a & \delta \\ 1 & D - a\end{pmatrix}, \quad
			\begin{pmatrix} x & u \\ u/\delta & w \end{pmatrix}, 
				\quad\textup{ and }
			\begin{pmatrix} ax + u & \delta x + (D- a)u\\
				x + (D - a)u/\delta  & (D - a)w + u
			\end{pmatrix}	
		\]
		generate a rank $4$ algebra that is isomorphic to $\OO_K$ so a $p$-primitive integral combination of these elements can never be in $p\Mat_2(\End(E))$.  Thus $\Q_p(\tilde w)\cap (\End(E)\otimes\Z_p) = \Z_p[\tilde w]$.
	\end{proof}
	
	Now we turn to the computation of $c_p$.  By the definition of $f_u$,
	\[
		\Q_p(u)\cap (\End(E)\otimes\Z_p) = 
		\Z_p\left[\frac{u}{f_u} + \frac{\Disc(u) - f_u\Tr(u)}{2f_u^2}\right].
	\]
	Since
	\[
		\frac{u}{p^s} = \frac{f_u}{p^s}
		\left(\frac{u}{f_u} + \frac{\Disc(u) - f_u\Tr(u)}{2f_u^2}\right)
		- \frac{\Disc(u) - f_u\Tr(u)}{2f_up^s},
	\]
	it is clear that $\frac{u}{p^s}\in\End(E)\otimes\Z_p$ if and only if $s\leq v_p(f_u)$ and $s\leq v_p(\frac{\Disc(u) - f_u\Tr(u)}{2f_u})$.  Since $c_p$ is the maximal $s$ such that $u/p^s\in\End(E)\otimes\Z_p$, this completes the proof of Theorem~\ref{thm:PreImage}.
	\end{proof}

	\subsection{Computing $\#\Aut(x,y,b,z)$}\label{subsec:auto}

	\begin{lemma}
		Fix elliptic curves $E_1, E_2$ and assume there exist isogenies $x\in\End(E_1), z\in\End(E_2),$ and $y,b\in\Hom(E_2,E_1)$ satisfying~\eqref{eq:xybz}.  Then $\#\Aut(x,y,b,z) = 2$.
	\end{lemma}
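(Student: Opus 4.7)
The plan is to show that $\Aut(x,y,b,z)$ consists of exactly the two pairs $(\phi_1,\phi_2) = (1,1)$ and $(-1,-1)$. The lower bound $\#\Aut(x,y,b,z) \geq 2$ is immediate since both of these pairs manifestly commute with $x$, $y$, $b$, and $z$. The real work is the upper bound.

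For the upper bound, the first step is to show that any such $\phi_1$ centralizes not just $x$ but also the endomorphism $u := yb^{\vee}$ of $E_1$. Here I would use that $E_1$ is supersingular (Proposition~\ref{prop:ell-divisibility}), so $\End(E_1)$ is an order in the definite quaternion algebra $\BBl$ and every automorphism satisfies $\phi\phi^{\vee} = 1$, hence $\phi^{\vee} = \phi^{-1}$. Taking the dual of $b\phi_2 = \phi_1 b$ gives $b^{\vee}\phi_1 = \phi_2 b^{\vee}$, and then combining with $y\phi_2 = \phi_1 y$ yields
\[
u\phi_1 = yb^{\vee}\phi_1 = y\phi_2 b^{\vee} = \phi_1 yb^{\vee} = \phi_1 u,
\]
so $\phi_1$ commutes with every element of $\Z[x,u] \subseteq \End(E_1)$.

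Next I would invoke (the proof of) Proposition~\ref{prop:ell-divisibility}: because $K$ contains no imaginary quadratic subfield, $\Z[x,u]$ is a rank-$4$ suborder of $\End(E_1)$, so $\Q[x,u]$ equals the full quaternion algebra $\BBl$. Since $\BBl$ is central simple over $\Q$, the centralizer of $\BBl$ in itself is just its center $\Q$. Therefore $\phi_1 \in \Q \cap \End(E_1) = \Z$, and as a unit in an order, $\phi_1 \in \{\pm 1\}$.

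Finally, given $\phi_1 = \pm 1$, the relation $b\phi_2 = \phi_1 b = \pm b$ together with the fact that $b \neq 0$ (since $\delta = \deg(b) \geq 1$) forces $\phi_2 = \pm 1$ with matching sign: multiplying on the left by $b^{\vee}$ yields $\delta\phi_2 = \pm\delta$ in $\End(E_2)$. The main conceptual step is the centralizer argument, but it reduces directly to the rank-$4$ computation already carried out in Proposition~\ref{prop:ell-divisibility}, so no genuinely new ingredient is required.
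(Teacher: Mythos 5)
Your proof is correct and follows essentially the same route as the paper: the paper also reduces to showing that $\phi_1$ commutes with $u=yb^{\vee}$ (via the embedding $\Aut(x,y,b,z)\hookrightarrow\Aut(x,u)\times\Aut(z,b^{\vee}y)$), invokes the rank-$4$ suborder from Proposition~\ref{prop:ell-divisibility} to conclude that such a $\phi_1$ lies in the center $\Z$ of $\End(E_1)$, and deduces $\Aut(x,y,b,z)=\{\pm(1,1)\}$. The only cosmetic difference is that you pin down $\phi_2$ directly from $b\phi_2=\pm b$, where the paper runs the symmetric centralizer argument on $(z,b^{\vee}y)\subseteq\End(E_2)$; also note that $\phi\phi^{\vee}=\deg\phi=1$ for any automorphism of an elliptic curve, so supersingularity is not needed for that particular step.
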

	\begin{proof}
		Recall that
		\begin{align*}
			\Aut(x,y,b,z) & := \left\{
			\phi_i\in \Aut(E_i)
			: x\phi_1 = \phi_1 x, b\phi_2 = \phi_1b, y\phi_2 = \phi_1y,
			  z\phi_2 = \phi_2 z \right\}.\\
		\Aut(x,u) & := \left\{
		\phi\in \Aut(E)
		: x\phi = \phi x, u\phi = \phi u \right\}.
		\end{align*}
		
		It is clear that there is a homomorphism $\Aut(x,y,b,z) \to \Aut(x, yb^{\vee}), \quad (\phi_1,\phi_2)\mapsto \phi_1$.  Similarly we obtain a homomorphism 
		\[
			\Aut(x,y,b,z)\to \Aut(z, b^{\vee}y):=
			\left\{\phi\in\Aut(E) : \phi z=z\phi, \phi b^{\vee}y = b^{\vee}y\phi\right\},
		\] 
		that sends $(\phi_1, \phi_2)\mapsto \phi_2.$  Therefore, we have an embedding 
		\[
			\Aut(x,y,b,z) \hookrightarrow 
			\Aut(x, u := yb^{\vee})\times\Aut(z, u^* := b^{\vee}y).
		\]

		The proof of Proposition~\ref{prop:ell-divisibility} shows that $x, u$ generate a sub-order of $\End(E_1)$ of finite index and that $\End(E_1)$ is rank $4$.  The same argument can be applied to $z, u^* = b^{\vee}y \in \End(E_2)$ to show that these elements generate a sub-order of $\End(E_2)$ of finite index and that $\End(E_2)$ is rank $4$.
		Thus, $\Aut(x,u)\subseteq Z(\End(E_1))^{\times}$ and $\Aut(z, u^*)\subseteq Z(\End(E_2)^{\times})$ where $Z(A)$ denotes the center of $A$.  Since the center of $\End(E_i)$ is just $\Z$, we see that $\Aut(x,u) = \Aut(z, u^*) = \{\pm1\}$.  Using the embedding above, it is easy to check that $\Aut(x,y,b,z) = \left\{\pm(1,1)\right\}$.
	\end{proof}
	\subsection{Relating multiplicities}\label{subsec:mult}

		Fix elliptic curves $E_1, E_2,$ and isogenies $x \in\End(E), y,b\in\Hom(E_2, E_1),$ and $z\in\End(E_2)$ satisfying~\eqref{eq:xybz}.  Let $I_{z} \subseteq \WW[[t_1,t_2]]$ be the minimal ideal such that there exists an isogeny $\widetilde{z}\in\End_{\WW[[t_1, t_2]]/I_{x,y,b,z}}(\EE_2)$ that reduces to $z$ modulo the maximal ideal of $\WW[[t_1,t_2]]$ and define $I_{u^*}$ similarly where $u^* := b^{\vee}y$.  Since $z,u^*$ are endomorphisms of $E_1$, we can view $I_{z}, I_{u^*}$ as ideals of $\WW[[t_2]]$; similarly we may view $I_{x,u}$ as an ideal of $\WW[[t_1]]$.
		
		\begin{prop}\label{prop:mult}
			The length of $\frac{\WW[[t_1,t_2]]}{I_{x,y,b,z}}$ is bounded above by $2\left(\textup{length }\frac{\WW[[t_1]]}{I_{x,u}}\right)$.  If $\ell \nmid \delta$, then
			\[
				\textup{length }\frac{\WW[[t_1,t_2]]}{I_{x,y,b,z}} =
				\textup{length }\frac{\WW[[t_1]]}{I_{x,u}}
			\]
		\end{prop}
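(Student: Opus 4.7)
The plan is to analyze $I_{x,y,b,z} \subseteq \WW[[t_1,t_2]]$ by isolating the contribution from the deformation of the isogeny $b$. Any lift $(\tilde x,\tilde y,\tilde b,\tilde z)$ automatically produces a lift $(\tilde x,\tilde u := \tilde y\tilde b^\vee)$ of $(x,u)$ in $\End(\EE_1)$, so $I_{x,y,b,z}$ contains the extension of $I_{x,u}$ from $\WW[[t_1]]$ to $\WW[[t_1,t_2]]$; it also contains the minimal ideal $I_b \subseteq \WW[[t_1,t_2]]$ cutting out the locus where $b$ lifts as an isogeny $\EE_2 \to \EE_1$. This yields a surjection
\[
	\WW[[t_1,t_2]] / \bigl(I_{x,u}\WW[[t_1,t_2]] + I_b\bigr) \twoheadrightarrow \WW[[t_1,t_2]]/I_{x,y,b,z},
\]
so it suffices to control the length of the source.

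By Serre--Tate theory, $\WW[[t_1,t_2]]/I_b$ is a one-dimensional formal $\WW$-algebra, finite over $\WW[[t_1]]$ under the natural map that forgets $\EE_2$. When $\ell\nmid\delta$, the isogeny $b$ is \'etale, so $\ker(b)\subseteq E_2$ has a canonical \'etale lift $\widetilde{\ker(b)}\subseteq\EE_2$, and the quotient $\EE_2/\widetilde{\ker(b)}$ is a canonical deformation of $E_1$. This gives a canonical isomorphism $\WW[[t_1,t_2]]/I_b \cong \WW[[t_1]]$ intertwining the universal $E_1$-deformations, so
\[
	\WW[[t_1,t_2]] / \bigl(I_{x,u}\WW[[t_1,t_2]] + I_b\bigr) \;\cong\; \WW[[t_1]]/I_{x,u}.
\]
Moreover, the surjection above is an isomorphism in this case: given a lift $(\tilde x,\tilde u,\tilde b)$, set $\tilde y := \tilde u\tilde b/\delta$, which is integral because $\delta\in\WW^\times$ and $\tilde u\tilde b$ kills the \'etale subscheme $\EE_2[\delta]$ by uniqueness of \'etale lifts (since $ub = y\delta$ does so modulo the maximal ideal); then set $\tilde w := \tilde x + (D-2a)\tilde u/\delta$ and $\tilde z := \tilde b^\vee\tilde w\tilde b/\delta$, and verify via~\eqref{eq:xybz} that the resulting tuple satisfies all required relations. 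This proves the equality in the $\ell\nmid\delta$ case.

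For the general upper bound, when $\ell\mid\delta$ the $\ell$-primary part of $b$ is no longer \'etale, so the map $\WW[[t_1,t_2]]/I_b \to \WW[[t_1]]$ may be ramified. Factoring $b$ into its prime-to-$\ell$ and $\ell$-primary components, the \'etale factor yields a canonical identification as above, and I would argue that the inseparable factor contributes a ramification that inflates the length by at most a factor of $2$. The main obstacle is precisely this quantification: it requires a Grothendieck--Messing-style analysis of the deformation of the connected subgroup scheme $\ker(b_\ell)\subseteq E_2$ in the presence of the endomorphism structure already pinned down by $I_{x,u}$, using that $\Z[x,u]$ spans a rank-$4$ sublattice of $\End(E_1)$ (as in the proof of Proposition~\ref{prop:ell-divisibility}) to rigidify all but a quadratic ambiguity in the lifts of the remaining pieces $\tilde y,\tilde z$.
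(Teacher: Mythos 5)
Your argument for the case $\ell\nmid\delta$ is sound and is essentially the paper's: the paper simply notes that $\deg b=\delta$ prime to $\ell$ makes $b$ an isomorphism of formal groups and invokes the argument of Gross--Keating (proof of Lemma 5.5), which is what your canonical lift of $\ker(b)$ and explicit reconstruction of $\tilde y,\tilde z$ amount to.

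The general upper bound, however, has a genuine gap, and you have located it yourself (``the main obstacle is precisely this quantification''). The route through $I_b$, the deformation locus of the isogeny $b$, cannot deliver the factor of $2$: when $\ell\mid\delta$, the length of the locus in $\WW[[t_1,t_2]]$ over which an isogeny of degree divisible by $\ell$ between supersingular curves deforms is computed by Gross--Keating and grows without bound as $v_\ell(\delta)$ and the other invariants grow, so no Grothendieck--Messing analysis of $\ker(b_\ell)$ will give a uniform bound of $2$ from that ideal alone. The paper sidesteps $b$ entirely: it introduces the \emph{endomorphisms} $z$ and $u^*:=b^{\vee}y$ of $E_2$, applies the argument of Lemma~\ref{lem:RelativelyPrimeIndices} to the pair $(z,u^*)$ to conclude that at least one of $\Z[z]$, $\Z[u^*]$ is an order maximal at $\ell$, and lets $J\subseteq\WW[[t_2]]$ be the deformation ideal of that endomorphism. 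By Gross's theory of canonical and quasi-canonical lifts, $J$ is generated by a \emph{monic polynomial of degree at most $2$ in $t_2$}, so $\WW[[t_1,t_2]]/(I_{x,u}+J)$ is free of rank at most $2$ over $\WW[[t_1]]/I_{x,u}$; combined with the surjection onto $\WW[[t_1,t_2]]/I_{x,y,b,z}$ (which holds since $I_{x,u},J\subseteq I_{x,y,b,z}$), this gives the bound. To repair your proof you would need to replace $I_b$ by such a $J$ coming from an endomorphism of $E_2$ whose order is maximal at $\ell$; the maximality is exactly what makes the quotient of $\WW[[t_2]]$ a degree-$\le 2$ extension of $\WW$ rather than something of large length.
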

		\begin{proof}
			By the same argument used in Lemma~\ref{lem:RelativelyPrimeIndices} applied to $z, u^*$ instead of $x, u$, either $\Z[z]$ or $\Z[u^*]$ is an order that is maximal at $\ell$.  If $\Z[z]$ is maximal at $\ell$, then define $J := I_{z}$; otherwise define $J := I_{u^*}$.  By definition of $I_{x,y,b,z}, I_{x,u},$ and $J$, we have the containments $I_{x,u}, J \subseteq I_{x,y,b,z}$.  Therefore, we have a surjection
			\[
				\frac{\WW[[t_1,t_2]]}{I_{x,u} + J}
				\twoheadrightarrow
				\frac{\WW[[t_1,t_2]]}{I_{x,y,b,z}}.
			\]
			This gives
			\[
				\textup{length }\frac{\WW[[t_1,t_2]]}{I_{x,y,b,z}} \leq
				\textup{length }\frac{\WW[[t_1,t_2]]}{I_{x,u}+J}.
			\]
			By~\cite{Gross-CanonicalLifts}, $J$ is generated by a linear or quadratic monic polynomial in $t_2$.  Thus
			\[
				\textup{length }\frac{\WW[[t_1,t_2]]}{I_{x,u}+J} \leq
				2\left(\textup{length }\frac{\WW[[t_1]]}{I_{x,u}}\right).
			\]
			This completes the first half of the proof.
			
			Now we assume that $\ell\nmid\delta.$  Since $\deg(b) = \delta$ is prime to $\ell$, $b$ gives an isomorphism between the formal groups of $E_1$ and $E_2$.  Then the argument is exactly the same as in~\cite[Proof of Lemma 5.5]{GrossKeating}.
		\end{proof}

	\subsection{Summary}\label{subsec:summary}
		Now we resume our proof of Theorem~\ref{thm:main}.  Recall that we had shown that
		\[
			\frac{(\CM(K).G_1)_{\ell}}{\log \ell} =
		   \sum_{\substack{\delta\in\Z_{>0}\\D - 4\delta = \square}}
			C_{\delta}\sum_{E_1}\sum_{E_2}
			\sum_{\substack{x,y,b,z\\
			\textup{up to iso.}\\\textup{ as above }}}
			\frac{1}{\#\Aut(x,y,b,z)}  \textup{length }
			\frac{\WW[[t_1, t_2]]}{I_{x,y,b,z}}.
		\]
		The argument in \S\ref{subsec:auto} and Proposition~\ref{prop:mult} show that 
		\[
				\frac{(\CM(K).G_1)_{\ell}}{\log \ell} \leq
				\sum_{\substack{\delta\in\Z_{>0}\\D - 4\delta=\square}}
				C_{\delta}\sum_{E_1}\sum_{E_2}
				\sum_{\substack{x,y,b,z\\
				\textup{up to iso.}\\\textup{ as above }}}
				\frac12  \cdot2\cdot\textup{length }
				\frac{\WW[[t_1]]}{I_{x,yb^{\vee}}},
		\]
		and if $\ell\nmid\delta$, then
		\[
			\frac{(\CM(K).G_1)_{\ell}}{\log \ell} =
			\sum_{\substack{\delta\in\Z_{>0}\\D - 4\delta=\square}}
			C_{\delta}\sum_{E_1}\sum_{E_2}
			\sum_{\substack{x,y,b,z\\
			\textup{up to iso.}\\\textup{ as above }}}
			\frac12  \cdot\textup{length }
			\frac{\WW[[t_1]]}{I_{x,yb^{\vee}}}.
		\]
		Using the results from \S\S\ref{subsec:xu}--\ref{subsec:mult} we will rearrange the terms as follows
		\begin{align*}
			&
				\frac12\sum_{\substack{\delta\in\Z_{>0}\\D - 4\delta=\square}}
				C_{\delta}\sum_{E_1}\sum_{E_2}
				\sum_{\substack{x,y,b,z\\
				\textup{up to iso.}\\\textup{ as above }}}
				\textup{length }
				\frac{\WW[[t_1]]}{I_{x,yb^{\vee}}}\\
				=\;&\frac12
				\sum_{\substack{\delta\in\Z_{>0}\\D - 4\delta=\square}}
				C_{\delta}\sum_{\substack{[(E_1, x, u)]\\\textup{ as above }}}
				 \textup{length }
				\frac{\WW[[t_1]]}{I_{x,u}}\cdot\#\left\{(E_2, y, b, z) \textup{ as above} : u = yb^{\vee}
			   \right\}\\
				=\;&\frac12
				\sum_{\substack{\delta\in\Z_{>0}\\D - 4\delta=\square}}
				C_{\delta}
				\sum_{\substack{n\in\Z \textup{ s.t. }\\
					\frac{\delta^2\Dtilde - n^2}{4D}\in\ell\Z_{>0} \\
					2D|(n + c_K\delta)}}
				\mu_{\ell}(n)
				\sum_{[(E_1, x, u)]\in\calE(n)}
				\#\left\{(E_2, y, b, z)\textup{ as above}:u = yb^{\vee}\right\}\\
				=\;&\frac12
				\sum_{\substack{\delta\in\Z_{>0}\\D - 4\delta=\square}}
				C_{\delta}
				\sum_{\substack{n\in\Z \textup{ s.t. }\\
					\frac{\delta^2\Dtilde - n^2}{4D}\in\ell\Z_{>0} \\
					2D|(n + c_K\delta)}}
				\mu_{\ell}(n)
				\sum_{f_u \in\Z_{>0}}
				\sum_{\substack{[(E_1, x, u)]\in\calE(n)\\
				[\Q(u)\cap\End(E_1) : \Z[u]]
				 = f_u }}
				\#\left\{(E_2, y, b, z)\textup{ as above}:u = yb^{\vee}\right\}\\
				= \;& \frac12\sum_{\substack{\delta\in\Z_{>0}\\D - 4\delta=\square}}
				C_{\delta}
				\sum_{\substack{n\in\Z \textup{ s.t. }\\
				\frac{\delta^2\Dtilde - n^2}{4D}\in\ell\Z_{>0} \\
				2D|(n + c_K\delta)}}{\mu_{\ell}(n)}	
				\sum_{f_u \in\Z_{>0}} \frakI(n,f_u)
				\sum_{\substack{[(E_1, x, u)]\in\calE(n)\\
				[\Q(u)\cap\End(E_1) : \Z[u]]
				 = f_u }}1
				\\
				= \;& \frac12\sum_{\substack{\delta\in\Z_{>0}\\D - 4\delta=\square}}
				C_{\delta}
				\sum_{\substack{n\in\Z \textup{ s.t. }\\
				\frac{\delta^2\Dtilde - n^2}{4D}\in\ell\Z_{>0} \\
				2D|(n + c_K\delta)}}
				{\mu_{\ell}(n)}
				\sum_{f_u\in\Z_{>0}}
				\frakI(n, f_u)\mathscr{J}(d_u(n)f_u^{-2}, d_x(n), t(n, f_u)).
		\end{align*}
		This completes the proof of Theorem~\ref{thm:MainResult}.\qed 
	
\section{Proof of Theorem~\ref{thm:UpperBound}}\label{subsec:ProofOfUpperBound}%
	
	If $\eta$ is any element of $\OO_K\setminus\OO_F$, then given any embedding $\iota\colon\OO_K\hookrightarrow \End(E_1\times E_2)$ we can restrict the domain to obtain an embedding $\iota|_{\OO_F[\eta]}\colon\OO_F[\eta]\hookrightarrow \End(E_1\times E_2)$.  From the definition of $I_{E_1,E_2,\iota}\subseteq \WW[[t_1,t_2]]$, it is clear that
	\[
		\textup{length }\frac{\WW[[t_1,t_2]]}{I_{E_1,E_2,\iota}}\leq
		\textup{length }\frac{\WW[[t_1,t_2]]}{I_{E_1,E_2,\iota|_{\OO_F[\eta]}}}.
	\]
	Since the center of $\End(E_1\times E_2)\otimes\Q$ is exactly $\Q$, it is also clear that
	\[
		\Aut(E_1,E_2,\iota) = \Aut(E_1,E_2,\iota|_{\OO_F[\eta]}).
	\]
	
	If $\iota\colon \OO_F[\eta]\hookrightarrow\End(E_1\times E_2)$ is any embedding ($\iota$ may or may not arise as the restriction of an embedding $\OO_K\hookrightarrow\End(E_1\times E_2)$), then
	\[
		\frac{1}{\#\Aut(E_1,E_2,\iota)}\cdot\textup{length }
		\frac{\WW[[t_1,t_2]]}{I_{E_1,E_2,\iota}}
	\]
	is positive.  Therefore
	\begin{equation}\label{eq:IsectNumber}
		(\CM(K).\textup{G}_1)_{\ell} = \sum_{\substack{E_1,E_2\\
		\iota\colon\OO_K\hookrightarrow\End(E_1\times E_2)}}
		\frac{1}{\#\Aut(E_1,E_2,\iota)}\cdot\textup{length }
		\frac{\WW[[t_1,t_2]]}{I_{E_1,E_2,\iota}}
	\end{equation}
	is bounded above by
	\begin{equation}\label{eq:UpperBound}
		\sum_{\substack{E_1,E_2\\
		\iota\colon\OO_F[\eta]\hookrightarrow\End(E_1\times E_2)}}
		\frac{1}{\#\Aut(E_1,E_2,\iota)}\cdot\textup{length }
		\frac{\WW[[t_1,t_2]]}{I_{E_1,E_2,\iota}}.
	\end{equation}
	
	We compute~\eqref{eq:UpperBound} in the same way that we computed~\eqref{eq:IsectNumber}.  As long as $\eta$ generates an order that is maximal at $\ell$ and all primes $p|\delta$ where $\delta$ is any positive integer such that $D - 4\delta$ is a square, the entire proof goes through verbatim with the exception of Lemma~\ref{lem:RelativelyPrimeIndices}.

	When $\eta$ does not generate the full maximal order $\OO_K$, the arguments in the proof of Lemma~\ref{lem:RelativelyPrimeIndices} prove the following slightly weaker lemma:
	\begin{lemma}\label{lem:FundamentalAtEllNonMaximalCase}
		Let $E$ be a supersingular elliptic curve over $\Fbar_{\ell}$ and let $x, u \in \End(E)$ be endomorphisms satisfying~\eqref{eq:xu}.  Then greatest common divisor of the indices 
		\[
			[\Q(x)\cap\End(E):\Z[x]]\textup{ and }[\Q(u)\cap\End(E):\Z[u]]
		\]
		is supported only at primes dividing $[\OO_K:\OO_F[\eta]]$.  In particular, if $[\OO_K:\OO_F[\eta]]$ is coprime to $\ell$, then at least one of $\Z[x]$, $\Z[u]$ is a quadratic imaginary order maximal at $\ell$.
	\end{lemma}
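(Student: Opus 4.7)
My plan is to follow the proof of Lemma~\ref{lem:RelativelyPrimeIndices} almost verbatim, isolating the one place where the hypothesis $\OO_K = \OO_F[\eta]$ was used and localizing the argument to primes where $\OO_F[\eta]$ remains maximal. The key step in the original proof was to form a specific $p$-primitive integral linear combination of the four matrices
\[
    \begin{pmatrix} 1 & 0 \\ 0 & 1 \end{pmatrix}, \quad
    \begin{pmatrix} a & \delta \\ 1 & D - a\end{pmatrix}, \quad
    \begin{pmatrix} x & u \\ u/\delta & w \end{pmatrix}, \quad
    \begin{pmatrix} ax + u & \delta x + (D- a)u\\
        x + (D - a)u/\delta  & (D - a)w + u
    \end{pmatrix}
\]
from the $p$-divisibility of both indices and exhibit it as an element of $p\Mat_2(\End(E)\otimes\Z_p)$. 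This step uses only the relations~\eqref{eq:xu} between $x,u,\delta,a,D$ and never invokes the maximality of the ambient order, so it transports unchanged.

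The sole use of $\OO_K = \OO_F[\eta]$ in the original proof occurred in the next step: identifying the $\Z$-span $\widetilde{S}$ of these four matrices with the full maximal order $\OO_K$ and invoking uniqueness of the maximal order at $p$ to contradict $p$-primitivity. Here I would instead identify $\widetilde{S}$ with $\OO_F[\eta]$, noting that $\widetilde{S}\otimes\Z_p = \OO_K\otimes\Z_p$ precisely when $p\nmid [\OO_K:\OO_F[\eta]]$. For such primes, $\widetilde{S}\otimes\Z_p$ remains the unique maximal $\Z_p$-order in $K\otimes\Q_p$, and the contradiction goes through exactly as before. Hence the greatest common divisor of the two indices is supported only in the set of primes dividing $[\OO_K:\OO_F[\eta]]$.

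For the ``in particular'' statement, I would then invoke~\cite[Chap.~II, Lemma~1.5]{Vigneras} to conclude that $\End(E)\otimes\Z_\ell$ consists of all integral elements at $\ell$, so both $\Q(x)\cap\End(E)$ and $\Q(u)\cap\End(E)$ are maximal at $\ell$. Combined with the coprimality of $\ell$ and $[\OO_K:\OO_F[\eta]]$, the first assertion forces $\ell$ not to divide at least one of $[\Q(x)\cap\End(E):\Z[x]]$ and $[\Q(u)\cap\End(E):\Z[u]]$, giving the desired maximality of $\Z[x]$ or $\Z[u]$ at $\ell$.

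The main obstacle I would watch out for is verifying that the explicit $p$-primitive combination used in Lemma~\ref{lem:RelativelyPrimeIndices} genuinely involves only $x$, $u$, the integer $\delta$, and their traces and norms modulo $p$, with no hidden dependence on $\eta$ generating $\OO_K$ over $\OO_F$; once this is confirmed, the whole argument localizes cleanly at each prime $p\nmid[\OO_K:\OO_F[\eta]]$ and the rest is formal.
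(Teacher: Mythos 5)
Your proposal is correct and follows exactly the route the paper intends: the paper itself only remarks that ``the arguments in the proof of Lemma~\ref{lem:RelativelyPrimeIndices} prove the following slightly weaker lemma,'' and your localization at primes $p\nmid[\OO_K:\OO_F[\eta]]$ (where $\widetilde{S}\otimes\Z_p=\OO_K\otimes\Z_p$ is still the unique maximal order, so the $p$-primitivity contradiction survives) is precisely the missing detail. The ``in particular'' step via \cite[Chap.~II, Lemma~1.5]{Vigneras} also matches the original argument verbatim.
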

	As the rest of the proof only requires that at least one of $\Z[x]$ and $\Z[u]$ is maximal at $\ell$ and that any $p|\delta \leq D/4$ does not divide both $[\Q(x)\cap\End(E):\Z[x]]\textup{ and }[\Q(u)\cap\End(E):\Z[u]]$, this lemma, together with our assumption on $\eta$, suffices to complete the proof of Theorem~\ref{thm:UpperBound}.\qed


\section{Embeddings of imaginary quadratic orders into endomorphism rings of 
supersingular elliptic curves}\label{sec:LVSingularModuli}

	In this section, we prove Theorem~\ref{thm:LVSingularModuli} which we restate here for the reader's convenience.
	
	\begin{theorem*}
		Fix $n, f_u\in\Z$ as above, set $d_x := d_x(n), d_u := d_u(n), t := t(n, f_u)$, and write $\OO_u$ for the quadratic imaginary order of discriminant $d_u/f_u^2$.  If the Hilbert symbol 
		\[
		(d_u, D(n^2 - \delta^2\Dtilde))_p = (d_u, (d_uf_u^{-2}d_x - 2t)^2 - d_u{f_u}^{-2}d_x)_p
		\] is equal to $-1$ for some prime $p\ne \ell$, then $\mathscr{J}\left(d_uf_u^{-2}, d_x, t\right) = 0$.  Otherwise $\mathscr{J}\left(d_uf_u^{-2}, d_x, t\right)$ is bounded above by
		\[
			2^{\#\{p\; :\; v_p(t) \geq v_p(d_uf_u^{-2}) > 0, p\nmid 2\ell\}}
			\cdot\tilde\rho_{d_uf_u^{-2}}^{(2)}(t, d_2)\cdot
			\#\left\{\frakb\subseteq\OO_u
			: \Norm(\frakb) = \frac{\delta^2\Dtilde - n^2}{4D\ell f_u^2},
			\frakb \textup{ invertible}\right\},
		\]
		where 
		\[
		\widetilde{\rho}^{(2)}_d(s_0,s_1)  := 
			\left\{
			\begin{array}{ll}
				2 & \textup{if } d \equiv 12 \bmod{16}, 
					s_0\equiv s_1\bmod2\\
				 & \textup{or if } 8\mid d, v_2(s_0) \ge v_2(d) - 2\\
				1& \textup{otherwise}
			\end{array}
			\right\}
			\cdot
			\left\{
			\begin{array}{ll}
				2 & \textup{if } 32\mid d, 4\mid (s_0 - 2s_1)\\
				1& \textup{otherwise}
			\end{array}
			\right\},
		\]
		if $\ell \neq 2$ and $\widetilde{\rho}^{(2)}_d(s_0,s_1) = 1$ if $\ell =2$.
		Furthermore, we have equality in the case that $\frac{\delta^2\Dtilde - n^2}{4Df_u^2}$ is coprime to the conductor of $\OO_u$ and, in all cases, there is an algorithm to compute $\mathscr{J}\left(d_uf_u^{-2}, d_x, t\right)$.
	\end{theorem*}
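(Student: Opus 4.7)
The plan is to interpret the count $\mathscr{J}(d_uf_u^{-2}, d_x, t)$, via Deuring's correspondence, as a sum of orbit counts of pairs $(\omega_1, \omega_2)$ in maximal orders $R \subset \BBl$, where $\omega_1 := i_1(\tfrac{d_1 + \sqrt{d_1}}{2})$ realizes an \emph{optimal} embedding of $\OO_u$ into $R$ and $\omega_2 := i_2(\tfrac{d_x + \sqrt{d_x}}{2})$ has prescribed reduced norm, reduced trace, and linking invariant $\Tr(\omega_1\omega_2^{\vee})$ determined by $t$. This reduces the statement to a purely arithmetic counting problem about pairs of quadratic elements in quaternion orders, which is precisely the setting of our earlier paper~\cite{LV-SingularModuli}. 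My overall strategy is to (i) identify the local obstructions governing when any such pair can exist, (ii) for unobstructed cases, parametrize the admissible $\omega_2$ (with $\omega_1$ fixed) by integral ideals of $\OO_u$, and (iii) combine this parametrization with the class count of optimal $\omega_1$ to arrive at the stated bound.

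For step (i), I would orthogonally decompose $\omega_2 = r_0 + r_1\omega_1 + \omega_2^{\perp}$, with $\omega_2^{\perp}$ in the $\Q(\omega_1)$-orthogonal complement inside $\BBl\otimes\Q$ with respect to the reduced norm. The norm, trace, and linking-trace conditions force $\Norm(\omega_2^{\perp})$ to equal the rational number $m$ with $4m = d_1 d_2 - (d_1 d_2 - 2t)^2$, and the existence of $\omega_2^{\perp}$ is equivalent to $\BBl$ containing a $\Q(\sqrt{d_1})$-line of norm $m$. By the Hasse principle for this norm form, existence is an everywhere-local condition captured by the Hilbert symbol $(d_1, -m)_p$. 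Since $\BBl$ is ramified only at $\ell$ and $\infty$, a value of $-1$ at any finite $p \neq \ell$ is an outright obstruction and yields $\mathscr{J} = 0$.

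For step (ii), with the Hilbert obstruction vanishing at all primes $p \neq \ell$, I would invoke the main bijection of~\cite{LV-SingularModuli}, which identifies the valid $\omega_2$'s attached to a fixed optimal $\omega_1$ with certain invertible fractional ideals of $\OO_u$ of norm $\frac{\delta^2\Dtilde - n^2}{4D\ell f_u^2}$ (the denominator $\ell$ arising because the Hilbert symbol at $\ell$ must be $-1$). The local factor $\tilde\rho^{(2)}_{d_1}(t, d_2)$ records subtleties of this bijection at the prime $2$, where the relation between elements of $R$ and ideals of $\OO_u$ depends on finer congruences mod $4$ and $8$; the factor $2^{\#\{p\,:\, v_p(t) \ge v_p(d_1) > 0,\, p \nmid 2\ell\}}$ accounts for odd primes $p$ where $d_1$ is nonmaximal and where the optimality condition on $\omega_1$ admits multiple local extensions, each contributing a distinct pair.

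The main obstacle is step (ii) in the case when $\frac{\delta^2\Dtilde - n^2}{4Df_u^2}$ shares a common factor with the conductor $f_1$ of $\OO_u$: then some ideals of the specified norm are non-invertible, the parametrization ceases to be a bijection, and each non-invertible contribution must be inspected individually to determine whether it actually lifts to an admissible $\omega_2$. This is precisely why the coprime hypothesis gives equality while the general case yields only the upper bound. For the algorithmic statement, one notes that determining the contribution of each non-invertible ideal is a finite local problem in $\Mat_2(\Z_p)$ at the primes $p$ dividing both $f_1$ and the quantity $\frac{\delta^2\Dtilde - n^2}{4Df_u^2}$, so $\mathscr{J}$ is computable by finite local enumeration combined with a global (invertible) ideal count, which is effective.
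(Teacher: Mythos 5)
Your overall architecture is the same as the paper's: reduce $\mathscr{J}\left(d_uf_u^{-2}, d_x, t\right)$ to the pair-counting results of \cite{LV-SingularModuli}, obtain the vanishing statement from the local (in)compatibility of the rank-$4$ order $\Z[\omega_1]\oplus\Z[\omega_1]\omega_2$ with $\BBl$ being ramified only at $\ell$ and $\infty$, and obtain the bound/equality dichotomy from the (partial) parametrization of admissible $\omega_2$ by invertible $\OO_u$-ideals, with the coprimality-to-the-conductor hypothesis governing surjectivity. That part is fine. However, there is one genuine gap and one glossed-over normalization.

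The gap: the quantity $\mathscr{J}$ imposes the optimality condition only on $i_1$ (equivalently on $\omega_1$); there is \emph{no} optimality condition on $i_2$, i.e.\ on $\omega_2 = i_2(\tfrac{d_x+\sqrt{d_x}}{2})$. The main theorem of \cite{LV-SingularModuli} that you propose to invoke counts only \emph{optimally embedded} $\phi$, i.e.\ it imposes $\Q(\phi)\cap\End(\overline{E(\tau_1)}) = \Z[\phi]$. Since $d_x$ need not be fundamental (handling non-maximal orders is the point of this generalization), invoking that theorem verbatim undercounts: it misses all $\omega_2$ generating a non-optimal copy of the order of discriminant $d_x$. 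The paper handles this explicitly, either by rerunning the proof of \cite[Thm.~3.1]{LV-SingularModuli} with every condition involving the conductor $f_2$ of $d_x$ deleted, or alternatively by stratifying the non-optimal $\phi$ according to the index $f\mid f_2$ and summing the optimal counts for the rescaled discriminants $d_xf^{-2}$. Your proposal needs one of these two devices. Secondly, your step (i)/(ii) passage from "pairs of embeddings modulo $\End(E)^{\times}$, summed over supersingular $E$" to "endomorphisms of the curves $\overline{E(\tau_1)}$, summed over CM points $[\tau_1]$" is where the normalizing constants live (Deuring lifting, the one-or-two choices of $[\tau_1]$ according to whether $\ell\mid d_1$, the ambiguity of $i_1$ up to Galois conjugation, and the unit count $w_1$); these must cancel exactly against the ramification index $e$ for the stated formula to have no prefactor, and your write-up does not verify this.
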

	
	\subsection{Background}
		The proof of Theorem~\ref{thm:LVSingularModuli} relies heavily on results proved in~\cite{LV-SingularModuli}.  We state the relevant results here and summarize the main ideas of the proofs. The interested reader is referred to~\cite{LV-SingularModuli} for the details.
		
		Let $d_1$ and $d_2$ be discriminants of quadratic imaginary orders and assume that the quadratic imaginary order of discriminant $d_1$ is maximal at $\ell$.  Write $f_i$ for the conductor of the order of discriminant $d_i$.  For every $\SL_2(\Z)$-class of elements in the upper half plane with discriminant $d_1$, we fix a representative $\tau_1$.  Let $E(\tau_1)/\Qbar_{\ell}$ be the elliptic curve with $j$-invariant $j(\tau_1)$. We may assume that $E(\tau_1)$ has good reduction and write $\overline{E(\tau_1)}$ for the reduced elliptic curve over $\Fbar_{\ell}$.  We fix an isomorphism $i_{\tau_1}\colon\Z[\frac12(d_1 + \sqrt{d_1})] \stackrel{\sim}{\to}\End(E(\tau_1))$ and let $\omega_1\in\End(E(\tau_1))$ denote the image of $\frac12(d_1 + \sqrt{d_1})$ in $\End(E(\tau_1))$ under this isomorphism. 
		
		Consider the following set
		\begin{equation}\label{eq:Snm}
			\coprod_{[\tau_1]}\left\{ 
			\begin{array}{rl}
				\phi\in\End(\overline{E(\tau_1)}) : &
				\Tr(\phi) = d_2, \Norm(\phi) = \frac14(d_2^2 - d_2), \\
			&\Tr(\omega_1\cdot\phi^{\vee}) = t, 
			\Q(\phi)\cap\End(\overline{E(\tau_1)}) = \Z[\phi]
			\end{array}\right\}
		\end{equation}
		By~\cite[Thm. 3.1 and proof of Thm. 3.1]{LV-SingularModuli}, we have:
		\begin{theorem}\label{thm:SummaryOfLV}
			Assume that $d_1d_2 \neq (d_1d_2 - 2t)^2$.  If the Hilbert symbol 
			\[
			(d_2, (d_1d_2 - 2t)^2 - d_1d_2)_p 
			\] is equal to $-1$ for some prime $p\ne \ell$, then~\eqref{eq:Snm} is empty.  Otherwise the cardinality of~\eqref{eq:Snm} is bounded above by
			\[
				2^{\#\{p\; :\; v_p(t) \geq v_p(d_1) > 0, p\nmid 2\ell\}}
				\cdot\tilde\rho_{d_1}(t, d_2)\cdot
				\frakA\left(\frac14(d_1d_2 - (d_1d_2 - 2t)^2)\right),
			\]
			where 
			\[
			\widetilde{\rho}^{(2)}_d(s_0,s_1)  := 
				\left\{
				\begin{array}{ll}
					2 & \textup{if } d \equiv 12 \bmod{16}, 
						s_0\equiv s_1\bmod2\\
					 & \textup{or if } 8\mid d, v_2(s_0) \ge v_2(d) - 2\\
					1& \textup{otherwise}
				\end{array}
				\right\}
				\cdot
				\left\{
				\begin{array}{ll}
					2 & \textup{if } 32\mid d, 4\mid (s_0 - 2s_1)\\
					1& \textup{otherwise}
				\end{array}
				\right\}
			\]
            if $\ell \neq 2$ and $\widetilde{\rho}^{(2)}_d(s_0,s_1) = 1$ if $\ell =2$,
			and
				\begin{align*}
					\frakA(N) = &\; \#\left\{
						\begin{array}{ll}
							& \Norm(\frakb) = N, \frakb \textup{ invertible},\\
							\frakb\subseteq\OO_{d_1} : & p\nmid\frakb
							\textup{ for all }p |\textup{gcd}(N, f_2), p\nmid \ell d_1\\
							& \frakp^3\nmid\frakb\textup{ for all }
							\frakp|p|\textup{gcd}(N, f_2, d_1), p\ne\ell
						\end{array}
						\right\}.
				\end{align*}
			Furthermore, this upper bound is an equality in the case that $\frac{d_1d_2 - (d_1d_2 - 2t)^2}{4}$ is coprime to the conductor of $\OO_{d_1}$ and, in all cases, there is an algorithm to compute the cardinality of~\eqref{eq:Snm}.
		\end{theorem}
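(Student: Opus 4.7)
My plan is to reduce the cardinality of~\eqref{eq:Snm} to a count of invertible ideals in $\OO_{d_1}$ via Deuring--Eichler theory, and then analyze the resulting local multiplicity factors.

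First, I would reinterpret the disjoint union geometrically. Each class $[\tau_1]$ of discriminant $d_1$ yields a CM elliptic curve $E(\tau_1)$ whose reduction $\overline{E(\tau_1)}$ is supersingular ($\ell$-maximality of $\OO_{d_1}$ is essential here), and the CM action descends to an optimal embedding $i_1\colon \OO_{d_1} \hookrightarrow \End(\overline{E(\tau_1)})$. An element $\phi$ in the $\tau_1$-summand satisfies the minimal polynomial of $\tfrac{d_2+\sqrt{d_2}}{2}$ by its trace and norm, so it corresponds to an optimal embedding $i_2\colon \Z[\tfrac{d_2+\sqrt{d_2}}{2}] \hookrightarrow \End(\overline{E(\tau_1)})$ satisfying $\Tr(i_1(\omega_1)\,i_2(\tfrac{d_2-\sqrt{d_2}}{2})) = t$. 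Thus~\eqref{eq:Snm} becomes a sum over supersingular curves of the number of pairs of optimal embeddings $(i_1,i_2)$ in specified relative position.

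Next, I would establish the vanishing criterion. Setting $\xi := 2\phi\omega_1^\vee - d_1\phi + d_2\omega_1$, the identity used in Proposition~\ref{prop:ell-divisibility} gives
\[
	\Disc(\xi) = (d_1 d_2 - 2t)^2 - d_1 d_2.
\]
The $\Q$-algebra $\Q\langle\omega_1,\phi\rangle$ is a quaternion algebra whose local Hilbert symbol is $(d_2, \Disc(\xi))_p$, so by the classification of quaternion algebras over $\Q$ by their ramification locus, it embeds into $\BBl$ iff this symbol equals $+1$ at every $p \ne \ell$ (the condition at $\ell$ is automatic, since $\ell$-maximality of $d_1$ forces local ramification there). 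If the symbol is $-1$ at some prime, no such $(E, i_1, i_2)$ exists and~\eqref{eq:Snm} is empty.

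When embeddings exist, the count is carried out via an Eichler--Brandt-type correspondence. Fix a maximal order $R \subset \BBl$ and an embedding $i_1\colon \OO_{d_1} \hookrightarrow R$; then each admissible $\phi$ produces a left $i_1(\OO_{d_1})$-ideal $\frakb(\phi)\subseteq R$ of reduced norm $N := \tfrac{d_1 d_2 - (d_1 d_2 - 2t)^2}{4}$. Varying $[\tau_1]$ over $\Cl(\OO_{d_1})$ corresponds to varying the right $\OO_{d_1}$-ideal class of $\frakb(\phi)$, so the assignment $\phi\mapsto\frakb(\phi)$ induces a surjection from~\eqref{eq:Snm} onto $\{\textup{invertible }\frakb\subset\OO_{d_1}:\Norm(\frakb) = N\}$, whose fibers are controlled by local multiplicities. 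For odd $p\ne\ell$ with $v_p(t)\ge v_p(d_1) > 0$, two orientations of $i_2$ at $p$ map to the same ideal, contributing the factor $2^{\#\{\cdots\}}$; the 2-adic factor $\tilde\rho^{(2)}_{d_1}(t,d_2)$ encodes analogous behavior at $2$, obtained by explicit enumeration of optimal embeddings into $\Mat_2(\Z_2)$ and into the ramified quaternion algebra over $\Q_2$.

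The main obstacle is the non-coprime case $\gcd(N, f_1) > 1$, where $\frakb(\phi)$ may fail to be invertible and the clean bijection breaks. Here, the upper bound is preserved by replacing $\frakb(\phi)$ with its invertible $\OO_{d_1}$-hull (which only enlarges the fiber count, never shrinks it), while the exact algorithm enumerates the fibers by a $p$-adic recursion analogous to the $\frakI^{(p)}$ analysis of \S\ref{sec:IdealsInM2Zp}. Equality in the coprime case follows because this hull construction is trivial whenever $\gcd(N, f_1) = 1$.
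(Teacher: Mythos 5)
Your step 2 is essentially right and coincides with the paper's own sketch: with $\xi := 2\phi\omega_1^{\vee} - d_1\phi + d_2\omega_1$ one gets $\Disc(\xi) = (d_1d_2-2t)^2 - d_1d_2$, the order $\Z[\omega_1]\oplus\Z[\omega_1]\phi$ has rank $4$ because $d_1d_2 \neq (d_1d_2-2t)^2$, and since it sits inside $\BBl$ the symbol $(d_2,\Disc(\xi))_p$ must be $+1$ at every finite $p\neq\ell$. (Be aware, though, that the paper does not reprove this theorem at all: it quotes it from \cite{LV-SingularModuli}, Thm.~3.1, and only records an idea of proof; a minor slip on your side is that maximality of $\OO_{d_1}$ at $\ell$ does not force supersingular reduction --- that depends on $\ell$ being nonsplit in $\Q(\sqrt{d_1})$; in the split case the corresponding summand of \eqref{eq:Snm} is simply empty.)

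The counting step is where the genuine gaps are. First, the stated bound is not by the number of invertible ideals of norm $N$ but by $\frakA(N)$, which carries extra local conditions at primes dividing $\gcd(N,f_2)$; these conditions come exactly from the optimality hypothesis $\Q(\phi)\cap\End(\overline{E(\tau_1)}) = \Z[\phi]$, which you never use after your first paragraph, so your argument can at best yield a strictly weaker bound and cannot give the asserted equality with $\frakA(N)$ in the coprime case. Second, the fiber analysis is asserted rather than proved: the actual argument writes $\End(\overline{E(\tau_1)})$ explicitly as an order in $\Mat_2(\Q(\sqrt{d_1}))$ (Gross--Zagier/Dorman style, \cite{LV-SingularModuli}, \S6) and shows that $\phi$'s of the given norm and trace produce invertible ideals lying in a \emph{fixed} class of $\Pic(\OO_{d_1})/2\Pic(\OO_{d_1})$, with several $\phi$ mapping to one ideal only when $v_p(t)\geq v_p(d_1)$; your ``two orientations of $i_2$'' heuristic explains neither why the multiplicity is conditioned on that divisibility of $t$ nor how the sum over $[\tau_1]$ (each ideal being reachable from several classes) is absorbed into the single $2$-power and the $2$-adic factor $\tilde\rho^{(2)}_{d_1}$, and your claim of surjectivity onto \emph{all} invertible ideals of norm $N$ skips precisely this class-mod-squares bookkeeping. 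Third, the non-coprime case is handled incorrectly: replacing $\frakb(\phi)$ by its invertible hull ``only enlarges the fiber count,'' as you say, but an upper bound of the shape (fiber bound)$\times$(ideal count) is destroyed, not preserved, by enlarging fibers, and the hull need not have norm $N$, so it is not among the ideals counted by $\frakA(N)$. That case --- $\gcd$ of $N$ with the conductor nontrivial --- is the hard part and is the whole point of \cite{LV-SingularModuli}; it cannot be dispatched by this one-line reduction.
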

		\noindent\textit{Idea of proof:}  A calculation shows that the discriminant of the suborder $R := \Z[\omega_1]\oplus\Z[\omega_1]\phi$ is $(\frac14(d_1d_2 - (d_1d_2 - 2t)^2))^2$.  Since, by assumption, this quantity is nonzero, the suborder $R$ has rank $4$ and so must be contained in $\BBl$.  Using arguments like those in Proposition~\ref{prop:ell-divisibility}, one shows that $d_1d_2 > (d_1d_2 - 2t)^2$ and thus we obtain the Hilbert symbol statement.  
		
		To prove the upper bound, we need to develop more machinery.  In~\cite[\S6]{LV-SingularModuli}, we give explicit presentations of $\End(\overline{E(\tau_1)})$ as suborders of $\Mat_2(\Q(\sqrt{d_1})).$  Using this presentation, one shows that elements $\phi$ of fixed norm and trace give rise to invertible ideals in $\OO_{d_1}$ that have a fixed ideal class in $\frac{\Pic\OO_{d_1}}{2\Pic\OO_{d_1}}.$  Moreover, multiple elements can give rise to the same ideal only if $t$ is sufficiently divisible by primes dividing $d_1$.  
		
		If $\frac14(d_1d_2 - (d_1d_2 - 2t)^2)$ is coprime to the conductor of $\OO_{d_1}$, then the converse holds, i.e., given an ideal in a fixed ideal class, one can construct one (or multiple, depending on $t$) endomorphisms $\phi$ with the desired properties.  The interested reader can find the details in~\cite[\S \S 5,6]{LV-SingularModuli}.
		
	\subsection{Proof of Theorem~\ref{thm:LVSingularModuli}}
	
		Let $d_1$ and $d_2$ be discriminants of quadratic imaginary orders and assume that the quadratic imaginary order of discriminant $d_1$ is maximal at $\ell$.  Recall that $\mathscr{J}(d_1,d_2,t)$ equals
		\[
			\sum_{E/\Fbar_{\ell}}\#\left\{
			\begin{array}{ll}
				i_j\colon\Z\left[\frac{d_j + \sqrt{d_j}}2\right]\hookrightarrow 
				\End(E) : &
				\Tr(i_1(d_1 + \sqrt{d_1})i_2(d_2 - \sqrt{d_2})) = 4t,\\
				& i_1(\Q(\sqrt{d_1}))\cap\End(E) = i_1(\Z\left[\frac{d_1 + \sqrt{d_1}}2\right])
			\end{array}
			\right\}/\End(E)^{\times}.
		\]
		We will relate $\mathscr{J}(d_1,d_2,t)$ to the number of endomorphisms of reductions of elliptic curves with complex multiplication; precisely, we will show that $\mathscr{J}(d_1,d_2,t)$ equals
		\[
			\frac{4}{w_1e}\sum_{[\tau_1]}\#\left\{ 
			\phi\in\End(\overline{E(\tau_1)}) : 
			\Tr(\phi) = d_2, \Norm(\phi) = \frac14(d_2^2 - d_2),
			\Tr(i_{\tau_1}(d_1 + \sqrt{d_1})\cdot\phi^{\vee}) = 2t\right\}.
		\]
	
		Let $E/\Fbar_{\ell}$ be an elliptic curve and let $i_1\colon \Z\left[\frac{d_1 + \sqrt{d_1}}2\right]\hookrightarrow \End(E)$ be an embedding such that $i_1(\Q(\sqrt{d_1}))\cap\End(E) = i_1(\Z\left[\frac{d_1 + \sqrt{d_1}}2\right])$.  By Deuring's lifting theorem\cite[Chap. 13, Thm. 14]{Lang-Elliptic}, there exists a $\tau_1$ in the upper half-plane of discriminant $d_1$ such that $\overline{E(\tau_1)}$ is isomorphic to $E$.  Furthermore, after possibly replacing $E$ with an isomorphic curve, and conjugating $i_1,i_2$ by an automorphism $\psi$ of $E$, we may assume that the embedding $i_{\tau_1}\colon \Z\left[\frac{d_1 + \sqrt{d_1}}{2}\right]\stackrel{\sim}{\to}\End(E(\tau_1)) \hookrightarrow\End(\overline{E(\tau_1)})$ either agrees with $i_1$ or differs from $i_1$ by precomposition with the nontrivial Galois automorphism.  By~\cite{Gross-CanonicalLifts}, the class of $\tau_1$ modulo $\SL_2(\Z)$ is unique if $\ell\nmid d_1$ and otherwise there are exactly two choices for the class of $\tau_1$.  Moreover, the choice of $\psi/\{\pm 1\}$ is unique up to multiplication by units in $(\im i_1)/\{\pm 1\}$.  
		
		Conversely, every $\tau_1$ gives rise to an elliptic curve $\overline{E(\tau_1)}/\Fbar_{\ell}$ and an embedding 
		\[
			i_{\tau_1}\colon \Z\left[\frac{d_1 + \sqrt{d_1}}{2}\right]
			\stackrel{\sim}{\to}
			\End(E(\tau_1)) \hookrightarrow\End(\overline{E(\tau_1)}).
		\]
		By~\cite[Prop. 2.2]{LV-SingularModuli}, we have $i_{\tau_1}(\Q(\sqrt{d_1}))\cap\End(E) = i_{\tau_1}(\Z\left[\frac{d_1 + \sqrt{d_1}}2\right])$.  Thus,
		\[
			\sum_{E/\Fbar_{\ell}}\#\frac{\{i_1\colon \Z\left[\frac{d_1 + \sqrt{d_1}}2\right]\hookrightarrow \End(E)\}}{\End(E)^{\times}} = \frac2e\cdot\#\{[\tau_1]: \textup{disc}(\tau_1) = d_1\},
		\]
		where $e$ denotes the ramification index of $\ell$ in $\Q(\sqrt{d_1})$.
		
		Now fix an element $\tau_1$ and fix an embedding $i_2\colon\Z\left[\frac{d_2 + \sqrt{d_2}}{2}\right]\hookrightarrow\End(\overline{E(\tau_1)})$ such that $\Tr(i_{\tau_1}(d_1 + \sqrt{d_1})i_2(d_2 - \sqrt{d_2})) = 4t$.  Then $i_2$ uniquely determines an element $\phi\in\End(\overline{E(\tau_1))})$ such that
		\[
			\Tr(\phi) = d_2, \quad \Norm(\phi) = \frac14(d_2^2 - d_2),
			\quad \Tr(i_{\tau_1}(d_1 + \sqrt{d_1})\phi^{\vee}) = 2t,
		\]
		namely $\phi := i_2\left(\frac{d_2 + \sqrt{d_2}}2\right)$.  Conversely, a choice of $\phi$ uniquely determines an embedding $i_2\colon \Z\left[\frac{d_2 + \sqrt{d_2}}2\right]\hookrightarrow(\End(\overline{\tau_1}))$.  Therefore, $\mathscr{J}(d_1,d_2,t)$ equals
				\[
					\frac{1}{e}\cdot \frac{2}{w_1}\cdot2\sum_{[\tau_1]}\#\left\{ 
					\phi\in\End(\overline{E(\tau_1)}) : 
					\Tr(\phi) = d_2, \Norm(\phi) = \frac14(d_2^2 - d_2),
					\Tr(i_{\tau_1}(d_1 + \sqrt{d_1})\cdot\phi^{\vee}) = 2t\right\}.
				\]
				
			In~\cite[Thm. 3.1]{LV-SingularModuli}, the present authors explain how to compute
			\[
				\sum_{[\tau_1]}\#\left\{
				\begin{array}{ll} 
					\phi\in\End(\overline{E(\tau_1)}) : &
						\Tr(\phi) = d_2, \Norm(\phi) = \frac14(d_2^2 - d_2),\\
						&\Tr(i_{\tau_1}(d_1 + \sqrt{d_1})\cdot\phi^{\vee}) = 2t,
						\Q(\phi)\cap\End(E(\tau_1)) = \Z[\phi]
				\end{array}\right\}.
			\]
			It is straightforward to see how to modify the proof of~\cite[Thm. 3.1]{LV-SingularModuli} in order to omit the last condition, that is, the condition that $\Q(\phi) \cap\End(E(\tau_1)) = \Z[\phi]$.  Roughly speaking, one should omit every step that involves the conductor of the order of discriminant $d_2$, as only the condition that $\Q(\phi) \cap\End(E(\tau_1)) = \Z[\phi]$ depends on this conductor.  After making these changes to the proof, one proves that the quantity
			\begin{equation*}
				\sum_{[\tau_1]}\#\left\{ 
					\phi\in\End(\overline{E(\tau_1)}) : 
					\Tr(\phi) = d_2, \Norm(\phi) = \frac14(d_2^2 - d_2),
					\Tr(i_{\tau_1}(d_1 + \sqrt{d_1})\cdot\phi^{\vee}) = 2t\right\}
			\end{equation*}
			is $0$ if there exists a prime $p\ne \ell$ such that the Hilbert symbol
			\[
				(d_u, D(n^2 - \delta^2\Dtilde))_p = 
				(d_u, (d_uf_u^{-2}d_x - 2t)^2 - d_u{f_u}^{-2}d_x)_p = -1,
			\]
			and otherwise, that it is bounded above by
			\[
			2^{\#\{p\; :\; v_p(t) \geq v_p(d_uf_u^{-2}) > 0, p\nmid 2\ell\}}
			\cdot\tilde\rho_{d_uf_u^{-2}}^{(2)}(t, d_2)\cdot
			\#\left\{\frakb\subseteq\OO_u
			: \Norm(\frakb) = \frac{\delta^2\Dtilde - n^2}{4D\ell f_u^2},
			\frakb \textup{ invertible}\right\}.
			\]
			One also shows that the upper bound is an equality in the case that $\frac{\delta^2\Dtilde - n^2}{4Df_u^2}$ is relatively prime to the conductor of the order of discriminant $d_uf_u^{-2}$.  This should not be surprising, as it is basically the statement of Theorem~\ref{thm:SummaryOfLV} with the conditions involving $f_2$, the conductor of the order of discriminant $d_2$, omitted. 
			\qed
		
			\begin{remark}
				There is an alternative way of proving Theorem~\ref{thm:LVSingularModuli} that does not require making the necessary modifications to the proof of~\cite[Thm. 3.1]{LV-SingularModuli}.  First one  notes that
				\[
				\left\{ 
				\begin{array}{rl}
					\phi\in\End(\overline{E(\tau_1)}) : &
					\Tr(\phi) = d_2, \Norm(\phi) = \frac14(d_2^2 - d_2), \\
				&\Tr(\omega_1\cdot\phi^{\vee}) = t, 
				\end{array}\right\}
				\]
				equals
				\[
				\coprod_{f|f_2}\left\{ 
				\begin{array}{rl}
					 &
					\Tr(\tilde\phi) = d_2f^{-2}, 
					\Norm(\tilde\phi) = \frac14(d_2^2f^{-4} - d_2f^{-2}), \\
					\tilde\phi\in\End(\overline{E(\tau_1)}) :
					&\Tr(\omega_1\cdot\tilde\phi^{\vee}) = 
					\frac1{2f^2}(2ft + d_1f - d_2f + d_2),\\ 
					&\Q(\tilde\phi)\cap\End(\overline{E(\tau_1)}) 
					= \Z[\tilde\phi]
				\end{array}\right\},
				\]
				where $f$ ranges over all positive divisors of $f_2$, the conductor of the order of discriminant $d_2$; the map $\tilde\phi \mapsto \frac12(2f\tilde\phi - d_2f^{-1} + d_2)$ gives a bijective map from the latter set to the former.  Then, one uses repeated applications of Theorem~\ref{thm:SummaryOfLV} to compute the cardinality of the latter set.  A series of algebraic manipulations will complete the proof.
			\end{remark}

\section{Ideals in $\BBl$}\label{sec:IdealsInM2Zp}

	In this section we prove Theorem~\ref{thm:CountingIdeals}, which we restate here for the reader's convenience.  Recall that for any integral ideal $I$ in $\BBl$, $\RO(I) = \{y\in\BBl : Iy\subseteq I\}$ is the right order of $I$.
	
	\begin{theorem*}
		Fix $R$ a maximal order in $\BBl$.  Assume that $x,u\in R$ and $\gamma,\delta\in \Z$ are such that
		\[
			\Tr(u), \quad \Norm(u), \quad \textup{and }\quad
			\Tr(xu^{\vee}) + \gamma\Norm(u)/\delta \quad 
			\textup{ are }0 \textup{ modulo }\delta.
		\]
		Define $w := x + \gamma u/\delta$, $c_p\in\Z$ to be such that $up^{-c_p}\in R_p\setminus pR_p$, and $r_p := \max(v_p(\delta) - c_p, 0)$.  Assume that for all $p|\delta, p\ne\ell$, either $c_p = 0$ or $\Q_p(p^{r_p}w)\cap R\otimes\Z_p = \Z_p[p^rw]$.

		Then $\#\left\{I\subseteq R : \delta,u\in I, \Norm(I) = \delta, 
		\textup{ and } w \in \RO(I)\right\}$ equals
		\[
			\prod_{p|\delta, p\ne\ell}\left(
			\sum_{\substack{j = 0 \\j \equiv v_p(\delta)\pmod 2}}^{v_p(\delta)} 
			\frakI^{(p)}_{j - r_p}(\Tr(w), \Norm(w))\right),
		\]
		where 
		\[
			\frakI_{C}^{(p)}(a_1, a_0) = 
			\begin{cases}
				\#\{\widetilde{t} \bmod p^{C} : 
					\widetilde{t}^2 - a_1\widetilde{t} + a_0
					\equiv 0 \pmod{p^{C}}\}
				& \textup{ if }  C\geq 0,\\
				0 & \textup{ if } C<0.
			\end{cases}
		\]
	\end{theorem*}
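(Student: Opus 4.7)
The plan is first to localize the count to a product over primes $p \mid \delta$ with $p \ne \ell$, and then at each such prime translate the problem into counting certain overlattices of $\Z_p^2$.

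For localization: an integral left $R$-ideal $I$ of reduced norm $\delta$ is determined by its completions $I_p$. At primes $p \nmid \delta\ell$ the norm constraint forces $I_p = R_p$, and at $p = \ell$ (when $\ell \mid \delta$) the order $R_\ell$ is the unique maximal order of the local division algebra $\BBl \otimes \Q_\ell$ and contains a unique two-sided ideal of each reduced norm, which must be $I_\ell$ and (one verifies using the hypotheses on $u, \gamma, \delta$) automatically satisfies the local conditions there. Thus the count factors as $\prod_{p \mid \delta,\, p \ne \ell} N_p$, where $N_p$ counts left ideals $J$ of $R_p$ of reduced norm $p^{v_p(\delta)}$ containing $u$ and $\delta$ with $w \in \RO(J)$.

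At each such prime $p$, I would identify $R_p \cong \End_{\Z_p}(V)$ with $V = \Z_p^2$ and use the bijection between left ideals of $\End(V)$ of reduced norm $p^k$ and overlattices $V \subseteq W \subseteq V \otimes \Q_p$ with $[W : V] = p^k$, sending $W \mapsto J_W := \Hom(W, V)$ (which embeds into $\End(V)$ by restriction and whose right order is $\End(W)$). Under this bijection, $w \in \RO(J_W) \iff w(W) \subseteq W$, $u \in J_W \iff u(W) \subseteq V$ (equivalently $W \subseteq u^{-1}(V)$), and $\delta \in J_W$ is automatic. I would then stratify overlattices by their Smith normal form $(a, b)$ with $a \leq b$ and $a + b = v_p(\delta)$, setting $j := b - a$; these $j$ are precisely the integers in $[0, v_p(\delta)]$ with $j \equiv v_p(\delta) \pmod 2$, matching the range of the inner sum. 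For fixed $j$, such $W$ are parameterized by a ``direction'', namely a primitive vector of $V$ modulo $p^j V$, up to unit scaling.

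The core step is then to impose both conditions on the direction. Using the Smith normal form of $u$ (with smaller invariant $p^{c_p}$), I would compute $u^{-1}(V)$ and show that $W \subseteq u^{-1}(V)$ forces $j \geq r_p$ (explaining why $\frakI^{(p)}_{j - r_p} = 0$ for $j < r_p$) and further reduces the allowed directions to a coset naturally parameterized by $V / p^{j - r_p} V$ modulo units. The residual condition $w(W) \subseteq W$ then becomes exactly the requirement that this reduced direction is an eigenvector of $w$ modulo $p^{j - r_p}$. Eigenvectors (up to unit scaling) correspond to eigenvalues $\bar t \in \Z / p^{j - r_p}$ satisfying $\bar t^2 - \Tr(w) \bar t + \Norm(w) \equiv 0 \pmod{p^{j - r_p}}$, giving exactly $\frakI^{(p)}_{j - r_p}(\Tr(w), \Norm(w))$ valid overlattices for each $j$. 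Summing over $j$ and multiplying over $p$ yields the formula.

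The main obstacle will be establishing that the map from allowed directions to eigenvalues modulo $p^{j - r_p}$ is actually a bijection. When $c_p = 0$ the $u$-constraint is trivial and this is immediate. When $c_p > 0$, the rescaled element $p^{r_p} w$ plays the role of the ``effective'' endomorphism on the reduced parameter space; the hypothesis $\Q_p(p^{r_p} w) \cap R \otimes \Z_p = \Z_p[p^{r_p} w]$ is precisely what guarantees $p^{r_p} w$ is not further divisible inside any subring of $R_p$, so that distinct directions yield distinct eigenvalues and every characteristic-polynomial root lifts back to an allowed direction. Without this hypothesis, the direction-to-root correspondence could fail to be a bijection and the count would be off.
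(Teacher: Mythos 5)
Your overall framework is the paper's proof in lattice language: the localization is identical, and the paper's parametrization of left ideals of $\Mat_2(\Z_p)$ by Hermite triples $(n,m,t)$ is exactly your parametrization of overlattices, with the quadratic congruence $\widetilde{t}^2 - \Tr(w)\widetilde{t} + \Norm(w)\equiv 0$ arising as your eigenvector condition. The problem is in your ``core step'', where you assign the wrong roles to the two constraints, and the resulting bookkeeping does not close. Concretely, the condition $W\subseteq u^{-1}(V)$ does \emph{not} force $j\geq r_p$: take $v_p(\delta)=2$ and $c_p=1$, so $r_p=1$; the stratum $j=0$ is $W=p^{-1}V$, and $u(W)=p^{-1}u(V)\subseteq V$ holds precisely because $c_p\geq 1$, so this $W$ passes the $u$- and $\delta$-tests even though $\frakI^{(p)}_{0-1}=0$ (and your ``coset parameterized by $V/p^{j-r_p}V$'' is meaningless for $j<r_p$). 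Similarly, on a non-primitive stratum $W=p^{-a}W'$ with $a>0$ and $W'=V+\Z_p p^{-j}v$, the $u$-condition reads $u'v\equiv 0 \pmod{p^{\,b-c_p}}$ with $b=(v_p(\delta)+j)/2$, cutting the directions down to a residue disc of size $p^{\,c_p-a}$, not $p^{\,j-r_p}=p^{\,c_p-2a}$; your parameter space is too large by a factor of $p^a$, so imposing a quadratic congruence modulo $p^{\,j-r_p}$ on it does not produce the stated count.

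What actually enforces both the cutoff $j\geq r_p$ and the reduction to a parameter of size $p^{\,j-r_p}$ is the condition $w\in\RO(J_W)=\End(W)=\End(W')$, which depends only on the primitive part $W'$: since $p^{r_p}w$ is optimally embedded, $w$ cannot stabilize any $W'$ with $W'/V$ cyclic of order $p^N$, $N<r_p$, and for $N\geq r_p$ the stabilized $W'$ correspond exactly to roots of the characteristic polynomial modulo $p^{N-r_p}$ (this is the paper's Proposition~\ref{prop:rightorders}, proved via the canonical form of Lemma~\ref{lem:ratlcanform}). The paper then disposes of the $u,\delta\in I$ conditions in the opposite direction from yours: it shows they are \emph{implied} by $w\in\RO(I)$ and $\Norm(I)=p^{v_p(\delta)}$, because every ideal with $w$ in its right order is contained in the unique primitive ideal $I_{u',r_p}$ of norm $p^{r_p}$ with that property, which equals $p^{-c_p}(R_pu+R_p\delta)$ (Lemmas~\ref{lem:unique-primitive}--\ref{lem:RuRdelta}). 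Note also that the input $v_p(\Norm(u))\geq c_p+v_p(\delta)$, which you need even to describe $u^{-1}(V)$, is itself derived in Lemma~\ref{lem:RuRdelta} from the optimal-embedding hypothesis on $p^{r_p}w$ rather than from the divisibility hypotheses alone. To repair your argument you would need to either prove these facts or redo the direction count stratum by stratum with the correct disc sizes and show the $w$-condition removes the surplus factor $p^a$; as written the plan would not go through.
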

	
	This section is independent of the rest of the paper, so we disregard any notation fixed elsewhere.  
	
	For any prime $p$, let $R_p := R\otimes_{\Z}\Z_p$.  If $p\neq \ell$, then after fixing an isomorphism of $\BBl\otimes\Q_{p}$ with $\Mat_2(\Q_p)$ we can view $R_p$ as a maximal order in $\Mat_2(\Q_p)$. Moreover, after conjugating by an appropriate element, we may assume that $R_p = \Mat_2(\Z_p)$.  If $I_p$ is an ideal in $\Mat_2(\Z_p)$, then $\RO(I_p) := \{A\in\Mat_2(\Q_p): I_pA\subseteq I_p\}$.  By~\cite{Vigneras}*{Chap. 2, Thm. 2.3}, there are $1 + p + \cdots + p^N$ ideals of norm $p^N$ in $\Mat_2(\Z_p)$, and they are all of the form
	\begin{equation}\label{eq:localideal}
		R_p\begin{pmatrix}
			p^n & t\\
			0 & p^m
		\end{pmatrix},
	\end{equation}
	where $n,m$ are positive integers such that $n + m = N$, and $t\in \Z_p$.  The triple $(n, m, t\bmod{p^m})$ uniquely determines the ideal.  By abuse of notation, we will use the triple $(n,m,t)$ to refer to both the element $\begin{pmatrix} p^n & t\\0 & p^m \end{pmatrix}$ and the ideal it generates.

	We say an element $y\in \Mat_2(\Q_p)$ is \textit{optimally embedded} \bvchange{in $R_p$} if $\Q_p(y)\cap R_p = \Z_p[y]$ and that $y$ is \textit{primitive} if $y\in R_p\setminus pR_p$.  An ideal $I$ is \textit{primitive} if it is generated by a primitive element, i.e. if $I = (n, m, t)$ where at least one of $n, m, $ or $v(t)$ are zero.  We divide primitive ideals into three cases: Case 1) $n = 0$, Case 2) $m = 0$, and Case 3) $n,m >0$, and $v(t) = 0$.  Note that these cases are mutually exclusive unless we are considering the unit ideal.
	
	In~\S\ref{sec:RightOrders} we give a formula that computes, for a fixed integral element $y\in \Mat_2(\Q_p)$ and integer $N$, the number of ideals $I_p$ of norm $p^N$ with $y\in\RO(I_p)$.  In~\S\ref{sec:IdealContainment}, we give a criterion to determine whether one ideal is contained in another.  In~\S\ref{sec:SummaryOfCount}, we explain how the results in the two previous sections come together to prove Theorem~\ref{thm:CountingIdeals}.
	
	\subsection{Right orders of ideals in $\Mat_2(\Z_p)$}\label{sec:RightOrders}
		
		\begin{lemma}\label{lem:ratlcanform}
			Let $y\in\Mat_2(\Q_p)$ be an integral element.  Assume that there exists an $r\in\Z_{\geq0}$ such that $p^ry$ is optimally embedded.	Then there exists an $A \in \GL_2(\Z_p)$ such that
			\[
				AyA^{-1} = 
				\begin{pmatrix}0 & -\Norm(y)p^r\\p^{-r} & \Tr(y)\end{pmatrix}
			\]
		\end{lemma}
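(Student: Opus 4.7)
The plan is to exhibit $A$ by producing a suitable $\Z_p$-basis of $\Z_p^2$ in which $y$ acts as the specified matrix. Concretely, if $v_1 \in \Z_p^2$ and $v_2 := p^r y\, v_1$ together form a $\Z_p$-basis of $\Z_p^2$, then taking $A^{-1}$ to be the matrix with columns $v_1, v_2$ puts $A$ in $\GL_2(\Z_p)$, and a direct computation shows $AyA^{-1}$ has the claimed form: the first column records $y v_1 = p^{-r} v_2$, which contributes $(0, p^{-r})^\top$, and the second column records $y v_2 = p^r y^2 v_1 = -\Norm(y)\, p^r v_1 + \Tr(y)\, v_2$ by Cayley--Hamilton, contributing $(-\Norm(y)p^r, \Tr(y))^\top$. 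So the problem reduces to finding a cyclic vector $v_1$ for the action of $p^r y$ on $\Z_p^2$.

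The key step is to show that $\overline{p^r y} \in \Mat_2(\F_p)$ is not a scalar matrix. If it were, say $\overline{p^r y} = \bar c\, I$, then $(p^r y - cI)/p$ would lie in $R_p = \Mat_2(\Z_p)$; but this element also lies in $\Q_p(p^r y)$ and has $p^r y$-coefficient equal to $1/p$, so it does not lie in $\Z_p[p^r y]$. This contradicts the optimal embedding hypothesis $\Q_p(p^r y) \cap R_p = \Z_p[p^r y]$. Once $\overline{p^r y}$ is known to be non-scalar, a standard argument produces a cyclic vector $\bar v_1 \in \F_p^2$, namely any $\bar v_1$ such that $\overline{p^r y}\, \bar v_1$ is not an $\F_p$-multiple of $\bar v_1$; lift it to $v_1 \in \Z_p^2$. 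The determinant of $[v_1 \mid p^r y v_1]$ is then a unit in $\Z_p$ by Nakayama, and we are done.

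I do not anticipate a serious obstacle. The only place that requires care is the verification that the optimal-embedding hypothesis precisely rules out $\overline{p^r y}$ being scalar, and the conventional check via Cayley--Hamilton that the matrix of $y$ in the basis $\{v_1, p^r y v_1\}$ has the specified form. Note that no uniqueness of $A$ is asserted, so we need not worry about the stabilizer of the target matrix in $\GL_2(\Z_p)$.
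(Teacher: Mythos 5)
Your proof is correct, and it takes a genuinely different route from the paper's. The paper works directly with the matrix entries: writing $y = \begin{pmatrix} a & b \\ c & d\end{pmatrix}$, it uses the optimal embedding hypothesis (via primitivity of $p^ry + \frac{\Disc(p^ry) - p\Tr(p^ry)}{2p}$, with a case split on whether $p$ divides the conductor of $\Disc(p^ry)$) to show that one of $b$, $c$, $a-d$ has valuation exactly $-r$, and then exhibits an explicit conjugating matrix $A$ in each of the three resulting cases. Your argument isolates the same key fact --- that $p^ry$ is not scalar modulo $p$ --- but derives it more cleanly: if $\overline{p^ry} = \bar c I$, then $(p^ry - cI)/p$ lies in $\Q_p(p^ry)\cap\Mat_2(\Z_p)$ but not in $\Z_p[p^ry]$, contradicting optimality. (This step uses that $\{1, p^ry\}$ is a $\Q_p$-basis of $\Q_p(p^ry)$, i.e., that $y\notin\Q_p$; both your proof and the paper's implicitly assume this, as the conclusion is false for scalar $y$, and it holds in every application.) From non-scalarity you extract a cyclic vector and conclude by Cayley--Hamilton that $y$ acts as the stated companion matrix in the basis $\{v_1,\, p^ry\,v_1\}$, avoiding both the conductor case split and the three-way entry analysis. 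One point worth making explicit: the hypothesis that $p^ry$ is optimally embedded already forces $p^ry\in\Mat_2(\Z_p)$, since $\Z_p[p^ry] = \Q_p(p^ry)\cap R_p\subseteq R_p$; ``integral'' alone only gives $\Tr(y),\Norm(y)\in\Z_p$, so this observation is what licenses your reduction of $p^ry$ modulo $p$. Also, the unit determinant of $[v_1\mid p^ry\,v_1]$ follows simply because its reduction mod $p$ is nonzero; Nakayama is not needed.
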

		\begin{proof}
			Write $y = \begin{pmatrix}a&b\\c&d\end{pmatrix}.$  \bvchange{Since $p^ry$ is optimally embedded in $R_p = \Mat_2(\Z_p)$, the element
            \[
                p^ry - p^ra = 
                \begin{pmatrix}
                0&p^rb\\p^rc&p^r(d-a)\end{pmatrix}    
            \]
            is primitive.  Therefore, at least one of $b$, $c$, or $a-d$ has valuation exactly $-r$.}
            
			If $v(b) = -r$, then let $A = \begin{pmatrix} -dp^r & bp^r\\1 & 0\end{pmatrix}.$  If $v(c) = -r$, then let $A = \begin{pmatrix} cp^r & -ap^r\\0&1\end{pmatrix}$.  If $v(b), v(c) > -r,$ and $v(a - d) = -r$, then let $A = \begin{pmatrix} (c - d)p^r & (b - a)p^r\\1&1\end{pmatrix}.$  One can easily check that these matrices fulfill the assertions in the lemma.
		\end{proof}

		\begin{prop}\label{prop:rightorders}
			Let $y$ be an integral element of $\Mat_2(\Q_p)$. Assume that there exists an $r\in\Z_{\geq0}$ such that $p^ry$ is optimally embedded.  Then the number of primitive ideals $I_p$ of norm $p^N$ such that $y\in \RO(I_p)$ is $\frakI^{(p)}_{N-r}(\Tr(y),\Norm(y))$, where: 
			\[
				\frakI_{\bvchange{m}}^{(p)}(a_1, a_0) = 
					\begin{cases}
						\#\{\widetilde{t} \bmod p^{N-r} : 
							\widetilde{t}^2 - a_1\widetilde{t} + a_0
							\equiv 0 \pmod{p^{\bvchange{m}}}\}
						& \textup{ if }  \bvchange{m\geq0},\\
						0 & \textup{ if } \bvchange{m<0}.
					\end{cases}
			\]
			In particular, there is a unique primitive ideal $J_p$ of norm $p^r$ such that $y\in \RO(J_p)$.  Furthermore, if $I_p$ is any other ideal \bvchange{(not necessarily primitive)} such that $y\in\RO(I_p)$ then $I_p\subseteq J_p$.
		\end{prop}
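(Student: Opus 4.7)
The strategy is to reduce to a canonical form for $y$ and then translate the right-order condition into an elementary congruence problem. First I would apply Lemma~\ref{lem:ratlcanform} to obtain $A\in\GL_2(\Z_p)$ such that
\[
y_0 := AyA^{-1} = \begin{pmatrix} 0 & -\Norm(y)p^r \\ p^{-r} & \Tr(y) \end{pmatrix}.
\]
Conjugation by $A\in R_p^{\times}$ is a ring automorphism of $R_p$ that preserves norms of left ideals and satisfies $\RO(AIA^{-1})=A\RO(I)A^{-1}$, so it is equivalent to count primitive ideals $I_p$ of norm $p^N$ with $y_0 \in \RO(I_p)$.

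I would then use the parametrization $I_p = R_pg$ with $g=\begin{pmatrix}p^n & t \\ 0 & p^m\end{pmatrix}$, together with the identity $\RO(R_pg)=g^{-1}R_pg$, to recast the condition as the concrete integrality requirement $gy_0g^{-1}\in R_p$, and split into the three primitive cases. In Case~2 ($m=0$, $t=0$) a direct calculation gives $(gy_0g^{-1})_{2,1} = p^{-N-r}$, and in Case~3 ($n,m>0$, $v(t)=0$) it gives $(gy_0g^{-1})_{1,1} = tp^{-n-r}$; in both instances the entry has strictly negative $p$-adic valuation as soon as $N\geq 1$, so these cases contribute nothing beyond the unit ideal, which is already subsumed in the $N=0$ instance of Case~1.

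The main case is Case~1 ($n=0$, $m=N$). Computing all four entries of $gy_0g^{-1}$ and tracking $p$-adic valuations shows that integrality forces $v(t)\geq r$ and $N\geq r$, and that writing $t = p^r\widetilde{t}$ reduces the only remaining condition to the single congruence
\[
\widetilde{t}^{\,2} - \widetilde{t}\,\Tr(y) + \Norm(y) \equiv 0 \pmod{p^{N-r}}.
\]
Since $\widetilde{t}$ varies over $\Z_p/p^{N-r}$, the count is precisely $\frakI^{(p)}_{N-r}(\Tr(y),\Norm(y))$, establishing the main formula.

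For the final assertions, specializing to $N=r$ makes the congruence trivial modulo $p^0$, forcing $\widetilde{t}=0$ and hence yielding the unique primitive ideal $J_p = R_p\begin{pmatrix} 1 & 0 \\ 0 & p^r \end{pmatrix}$ with $y\in\RO(J_p)$. To obtain the containment $I_p\subseteq J_p$ for an arbitrary ideal $I_p$ with $y\in\RO(I_p)$, I would decompose $I_p = p^a I'$ with $I'$ primitive, note that $\RO(p^aI')=\RO(I')$, and reduce to the primitive case; there, the explicit factorization
\[
\begin{pmatrix}1 & t\\ 0 & p^N\end{pmatrix} = \begin{pmatrix}1 & t/p^r\\ 0 & p^{N-r}\end{pmatrix}\begin{pmatrix}1 & 0\\ 0 & p^r\end{pmatrix}
\]
exhibits the generator of $I_p$ as an element of $J_p$, using precisely the conditions $p^r\mid t$ and $N\geq r$ already extracted in Case~1. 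The main technical hurdle is the $p$-adic bookkeeping in computing $gy_0g^{-1}$; the clean canonical form of $y_0$ is what makes the four resulting integrality conditions collapse, and once the computation is executed, the case analysis rules out Cases~2 and~3 on valuation grounds alone.
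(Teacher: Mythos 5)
Your proposal is correct and follows essentially the same route as the paper: conjugate $y$ into the canonical form of Lemma~\ref{lem:ratlcanform}, translate $y\in\RO(R_pg)$ into $gy_0g^{-1}\in R_p$ for the triple parametrization of primitive ideals, reduce to the congruence $\widetilde{t}^{\,2}-\Tr(y)\widetilde{t}+\Norm(y)\equiv 0\pmod{p^{N-r}}$, and obtain the containment $I_p\subseteq J_p$ from the same explicit factorization of generators. The only differences are organizational (you eliminate Cases~2 and~3 by an explicit case split where the paper deduces $n=0$, $m=N$ directly from the integrality inequalities, and you spell out the reduction of non-primitive ideals via $\RO(p^aI')=\RO(I')$, which the paper leaves implicit), so no further comment is needed.
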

		\begin{proof}
			By Lemma~\ref{lem:ratlcanform}, there exists $A\in \GL_2(\Z_p)$ such that $\tilde y := AyA^{-1} =  \begin{pmatrix} 0 & -\Norm(y)p^{r} \\ p^{-r} & \Tr(y) \end{pmatrix}$.  		
			Recall that an element $y$ is in the right order of $R_pT$ if and only if $TyT^{-1} \in R_p$.  Therefore, $y$ is in the right order of $R_pT$ if and only if $\tilde y$ is in the right order of $R_pTA^{-1}$.Thus it suffices to count the number of ideals $I_p$ such that $\tilde y \in \RO(I_p)$.  
	
			Let $I_p$ correspond to the triple $(n,m,t)$.  Then $\tilde y\in \RO(I_p)$ if and only if 
			\begin{equation}
				m - n - r \geq 0, \quad v(t) - n - r \geq 0, \quad t^2p^{-r - N} - t\Tr(y)p^{-m} + \Norm(y)p^{n - m + r} \in \Z_p.
			\end{equation}
			
			The first two conditions imply that $m \geq n$ and $v(t) \geq n$.  Since one of $m,n, v(t)$ must be $0$, this implies that $n = 0$ and $m = N$.  Now the first condition shows that there are no solutions if $N < r$; so from now on we assume that $r \leq N$.  The second condition implies that $t = p^r\widetilde{t}$; substituting this into the third condition we obtain
			\begin{equation}\label{eq:tCondition}
				\widetilde{t}^2p^{r - N} - \Tr(y)\widetilde{t}p^{r - N} + \Norm(y)p^{r - N} \in \Z_p.
			\end{equation}
			This completes the proof of the formula for $\frakI_{M-r}(\Tr(y),\Norm(y))$.
			
			The argument above shows that any ideal $I_p$ such that $y\in \RO(I_p)$ is equal to
			\[
				R_p\bvchange{p^k}\begin{pmatrix} 1 &\widetilde{t}p^r\\0& p^N\end{pmatrix}A,
			\]
			where $\widetilde{t}$ satisfies~\eqref{eq:tCondition}, $N\geq r$, \bvchange{and $k$ is any nonnegative integer}; in particular, if $N = r$, there is a unique ideal $J_p:= R_p\begin{pmatrix} 1& 0 \\0& p^r\end{pmatrix}A$ such that $y\in \RO(J_p)$.  Since
			\[
				\begin{pmatrix} 1 &\widetilde{t}p^r\\0& p^N\end{pmatrix}A\cdot
				\left[\begin{pmatrix} 1& 0 \\0& p^r\end{pmatrix}A\right]^{-1}
				=
				\begin{pmatrix}1&\widetilde{t}\\0& p^{N -r}\end{pmatrix}\in R_p,
			\]
			$I_p\subseteq J_p$, as desired.
		\end{proof}
		
	\subsection{Lattice of ideals in $\Mat_2(\Z_p)$}\label{sec:IdealContainment}
		
		\begin{lemma}\label{lem:unique-primitive}
			Let $z$ be a primitive integral element of $\Mat_2(\Z_p)$.  Then there is a unique ideal of norm $p^N$ containing $z$ for all $N \leq v_p(\Norm(z))$.  We write $I_{z, N}$ for this unique ideal.
		\end{lemma}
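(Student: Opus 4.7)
The plan is to reduce to a diagonal case via the Smith normal form over $\Z_p$. Since $z \in R_p := \Mat_2(\Z_p)$ is primitive and $k := v_p(\Norm(z)) = v_p(\det z)$, its elementary divisors are $1$ and $p^k$, so there exist $U, V \in \GL_2(\Z_p) = R_p^\times$ with $z = UDV$ where $D = \bigl(\begin{smallmatrix} 1 & 0 \\ 0 & p^k \end{smallmatrix}\bigr)$.

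Next, I would observe that the assignment sending a left ideal $I = R_p T$ to $R_p(U^{-1} T V^{-1})$ is a norm-preserving bijection on left ideals of $R_p$ that translates the condition $z \in I$ into $D \in R_p(U^{-1} T V^{-1})$. Well-definedness at the ideal level uses the identity $u R_p = R_p = R_p u$ for $u \in R_p^\times$ (so that different generators $T$ of $I$ produce $R_p^\times$-equivalent images), and norm preservation uses that $\det U$ and $\det V$ are units in $\Z_p$. The translation of containment is a direct substitution. It therefore suffices to prove the lemma for $z = D$.

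To finish, by~\eqref{eq:localideal} every left ideal of norm $p^N$ has the form $I = R_p \bigl(\begin{smallmatrix} p^n & t \\ 0 & p^m \end{smallmatrix}\bigr)$ with $n + m = N$, $n, m \geq 0$, and $t$ taken modulo $p^m$. The condition $D \in I$ is equivalent to $D \bigl(\begin{smallmatrix} p^n & t \\ 0 & p^m \end{smallmatrix}\bigr)^{-1} \in R_p$, and a direct computation of this product shows it is equivalent to the three constraints $m \geq N$, $v_p(t) \geq N$, and $k + n \geq N$. The first forces $(n, m) = (0, N)$; the second then forces $t \equiv 0 \pmod{p^N}$; and the third is exactly the hypothesis $N \leq k$. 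So there is a unique such ideal, and unwinding the reduction yields $I_{z, N} = R_p\, U \bigl(\begin{smallmatrix} 1 & 0 \\ 0 & p^N \end{smallmatrix}\bigr) V$.

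I do not anticipate a serious obstacle. The only point requiring care is the verification that the bijection above is well defined at the level of ideals, not merely at the level of matrix representatives, which as noted reduces to the standard fact that $R_p^\times$ normalizes $R_p$.
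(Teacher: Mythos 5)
Your proof is correct, and it takes a slightly different route from the paper's. The paper puts the left ideal $R_pz$ into the triangular normal form $(n,m,t)$ of~\eqref{eq:localideal} (a one-sided Hermite-type reduction), writes out the containment condition $(n,m,t)\subseteq(n',m',t')$ as the integrality of an explicit matrix product, and then runs a three-way case analysis according to which of $n$, $m$, $v(t)$ vanishes; primitivity of $z$ guarantees one of them does. You instead use the two-sided Smith normal form $z=U\,\mathrm{diag}(1,p^k)\,V$, which collapses the case analysis to the single computation with $D=\mathrm{diag}(1,p^k)$, and your handling of well-definedness and norm-preservation of $I\mapsto R_p(U^{-1}TV^{-1})=IV^{-1}$ is right. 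What your streamlining gives up is the explicit output of the paper's case analysis, namely the formulas~\eqref{eq:containment1}--\eqref{eq:containment3} expressing the triple of $I_{z,N}$ directly in terms of the triple of $R_pz$; those are quoted repeatedly in the proof of Lemma~\ref{lem:scalar-containment}, so if you adopted your version you would either need to rederive them or rewrite that later argument in the Smith-form coordinates (where the answer $I_{z,N}=R_p\,\mathrm{diag}(1,p^N)\,V$ depends on the auxiliary matrix $V$ rather than on invariants of $R_pz$ alone). As a self-contained proof of the lemma as stated, yours is complete and arguably cleaner.
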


		\begin{proof}
			Let $(n,m,t)$ be a generator for the ideal $R_pz$, i.e. $(n,m,t) = \epsilon z$, for some $\epsilon \in R_p^{\times}$.  Then $(n,m,t)$ is contained in an ideal $I$ if and only if $z$ is contained in $I$.  Assume that $(n,m,t)$ is contained in $(n', m', t')$, where $m' + n' = N$.  Thus, the product
			\[
				\begin{pmatrix}
					p^n & t\\
					0 & p^m
				\end{pmatrix}
				\begin{pmatrix}
					p^{-n'} & -t'p^{-N}\\
					0 & p^{-m'}
				\end{pmatrix}
				= \begin{pmatrix}
					p^{n-n'} & tp^{-m'} - t'p^{n - N}\\
					0 & p^{m-m'}
				\end{pmatrix}
			\]
			must be in $R_p$.  Therefore, $n \geq n', m \geq m', t \equiv t' p^{n - n'} \pmod{p^{m'}}$.  A case-by-case analysis shows that there is a unique primitive tuple $(n', m', t')$ with $n' + m' = N$ that satisfies these conditions; they are listed here for the readers' convenience.
			\begin{align}
				n = 0 & \Rightarrow n' = 0, m' = N, t' \equiv t \pmod{p^{N}},\label{eq:containment1}\\
				m = 0 & \Rightarrow n' = N, m' = 0, \label{eq:containment2}\\
				v(t) = 0 & \Rightarrow n' = \min(n, N), m' = N - n',  
				t' \equiv t \pmod {p^{m'}}. \label{eq:containment3}
			\end{align}
			We remark that there is no condition on $t'$ in~\eqref{eq:containment2} since $t'$ is only defined modulo $p^{m'} = 1.$
		\end{proof}

		\begin{lemma}\label{lem:scalar-containment}
			Let $j,k,r,s$ be non-negative integers and let $y,z$ be primitive elements of norm at least $p^r, p^s$ respectively.  Then $p^j I_{y,r} \subseteq p^kI_{z,s}$ if and only if $r,s \geq s - j + k$ and $I_{y,s - j + k} = I_{z, s - j + k}$.  (If $s - j + k < 0$ then this last condition is vacuous.)
		\end{lemma}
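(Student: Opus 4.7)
The plan is to reduce the containment $p^j I_{y,r} \subseteq p^k I_{z,s}$ to a simpler containment between primitive ideals and then invoke the uniqueness from Lemma~\ref{lem:unique-primitive}. First I would dispose of the case $k > j$: since $I_{y,r}$ is primitive, its primitive generator has some entry of $p$-adic valuation zero, so multiplying by $p^j$ produces an element with some entry of valuation exactly $j < k$, which therefore cannot lie in $p^k R_p \supseteq p^k I_{z,s}$. This shows the containment fails when $k > j$, matching the required inequality $s \geq s - j + k$.

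Setting $c := j - k \geq 0$, the containment is equivalent to $p^c I_{y,r} \subseteq I_{z,s}$, and the remaining conditions become $r \geq s - c$ together with $I_{y, s-c} = I_{z, s-c}$ (where $I_{z, s-c}$ is interpreted as $R_p$ when $s - c \leq 0$). The crucial step is the identity
\[
\{\alpha \in R_p : p^c \alpha \in I_{z,s}\} = I_{z, \max(s-c,0)},
\]
coupled with the inclusion $p^c I_{z, s-c} \subseteq I_{z,s}$. The inclusion I would establish by a direct matrix calculation: the primitive generators $(n_2', m_2', t_2')$ of $I_{z, s-c}$ and $(n_2, m_2, t_2)$ of $I_{z, s}$ satisfy $n_2 \geq n_2'$, $m_2 \geq m_2'$, and $t_2 \equiv t_2' p^{n_2 - n_2'} \pmod{p^{m_2'}}$ by the chain $I_{z,s} \subseteq I_{z, s-c}$ (as analyzed in the proof of Lemma~\ref{lem:unique-primitive}), and these relations are precisely what is needed to verify $p^c(n_2', m_2', t_2') \in R_p(n_2, m_2, t_2)$. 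The identity above I would prove case by case according to which entry of $(n_2, m_2, t_2)$ is a unit, using the three cases from \eqref{eq:containment1}--\eqref{eq:containment3}. In each case I would explicitly compute when $p^c \alpha \in R_p(n_2, m_2, t_2)$ for an integral $\alpha = (n', m', t')$, and then verify that the resulting conditions on $(n', m', t')$ match exactly those for membership in the primitive ideal $I_{z, s-c}$ as prescribed by Lemma~\ref{lem:unique-primitive}.

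Granting these two facts, $p^c I_{y,r} \subseteq I_{z,s}$ is equivalent to $I_{y,r} \subseteq I_{z, s-c}$: the forward direction follows from $I_{y,r} \subseteq \{\alpha \in R_p : p^c \alpha \in I_{z,s}\} = I_{z, s-c}$, and the reverse from $p^c I_{y,r} \subseteq p^c I_{z,s-c} \subseteq I_{z,s}$. It then remains to handle the case $c = 0$, where $I_{y,r} \subseteq I_{z, s-c}$ iff $r \geq s-c$ and $I_{y, s-c} = I_{z, s-c}$. Assuming $s-c \geq 0$, the forward direction uses that $I_{z, s-c}$ is an ideal of norm $p^{s-c}$ containing $y$, so by the uniqueness in Lemma~\ref{lem:unique-primitive}, $I_{z, s-c} = I_{y, s-c}$ (after noting $s - c \leq r \leq v_p(\Norm(y))$ via index comparison); the reverse uses the descending chain $I_{y,r} \subseteq I_{y, s-c}$. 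The main obstacle will be the case analysis in establishing $\{\alpha \in R_p : p^c \alpha \in I_{z,s}\} = I_{z, s-c}$: although each case is a routine matrix computation, matching the resulting congruences against the explicit primitive form of $I_{z, s-c}$ from Lemma~\ref{lem:unique-primitive} requires careful bookkeeping, particularly at the boundary case $s - c = 0$ where the ideal collapses to $R_p$.
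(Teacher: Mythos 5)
Your argument is correct, but it is organized quite differently from the paper's proof. The paper handles the forward implication by a single joint computation with both generators: after reducing to $k=0$ it multiplies $(n_r,m_r,t_r)$ against the inverse of $(n_s,m_s,t_s)$, extracts the inequalities and the congruence $t_rp^{n_s}\equiv t_sp^{n_r}\pmod{p^{s-j}}$, and then runs a three-way case analysis on which entry of $(n_r,m_r,t_r)$ is a unit to identify $I_{y,s-j}$ with $I_{z,s-j}$ via \eqref{eq:containment1}--\eqref{eq:containment3}; the backward implication is done by factoring $z_s=z'y_{s-j+k}$ at the level of generators. You instead isolate the saturation identity $\{\alpha\in R_p: p^{c}\alpha\in I_{z,s}\}=I_{z,\max(s-c,0)}$, which converts the scalar-twisted containment into a containment $I_{y,r}\subseteq I_{z,s-c}$ of primitive ideals that is then dispatched purely by the uniqueness in Lemma~\ref{lem:unique-primitive} plus an index count. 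This buys a cleaner logical structure: all case analysis is confined to a statement about the single ideal $I_{z,s}$, and the comparison of $I_{y,\cdot}$ with $I_{z,\cdot}$ never requires inspecting generators. (In fact you can get the saturation identity almost for free, bypassing your planned case analysis: the set $\{\alpha\in R_p: p^{c}\alpha\in I_{z,s}\}$ is visibly a left ideal containing $z$, and Smith normal form shows it is primitive of norm $p^{\max(s-c,0)}$, so uniqueness finishes it.) Two points to make explicit in the write-up, both minor: the descending chain $I_{y,r}\subseteq I_{y,s'}$ for $s'\le r$ deserves a line of justification (any primitive ideal of norm $p^{s'}$ containing $I_{y,r}$ contains $y$, hence equals $I_{y,s'}$), and in the degenerate case $s-c\le 0$ one should note $p^{s}R_p\subseteq I_{z,s}$ so that the containment is indeed automatic on both sides.
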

		\begin{proof}
			We prove the backwards direction first.  Let $y_N, z_N$ denote generators for $I_{y,N}$, $I_{z,N}$ respectively for any (valid) integer $N$.  Since  $s \geq s - j + k$ and $I_{y, s- j + k} = I_{z, s - j + k}$, we may write $z_s$ as $z'y_{s - j + k}$ for some $z'\in R_p$ of norm $p^{j - k}$.  We rewrite $p^j y_r(p^kz_s)^{-1}$ as follows
			\[
				p^{j-k}y_r(z' y_{s - j + k})^{-1} = y_ry_{s - j + k}^{-1}\cdot p^{j - k}z'^{-1}.
			\]
			By definition of $y_N$ and since $r\geq s - j + k$, $y_ry_{s - j + k}^{-1}\in R_p$.  Additionally, since $z'$ has norm $p^{j - k}$, $p^{j - k}z'^{-1}\in R_p$.  Thus $p^jI_{y,r}\subseteq p^kI_{z,s}$.
			
			Now we consider the forward direction; assume that $p^{j}I_{y,r}\subseteq p^k I_{z,s}$.  Then $p^{j - k}I_{y,r}\subseteq I_{z,s}\subseteq R_p$.  Since $I_{y,r}$ is primitive, this implies that $j \geq  k$, or equivalently that $s\geq s + k - j$.  Without loss of generality we reduce to the case that $k = 0$.  
			
			If $s \leq j$, then all remaining conditions are vacuous, so we assume that $s > j$.  Let $(n_r, m_r, t_r)$ be a generator of $I_{y,r}$ and $(n_s, m_s, t_s)$ be a generator of $I_{z,s}$.  By assumption, we have
			\[
				p^j\begin{pmatrix}p^{n_r} & t_r \\ 0& p^{m_r}\end{pmatrix}
				\begin{pmatrix}p^{-n_s} & -t_sp^{-s} \\ 0& p^{-m_s}\end{pmatrix}
				= \begin{pmatrix}p^{j + n_r - n_s} & 
				t_rp^{j - m_s} - t_sp^{j + n_r - s}\\
				0 & p^{j + m_r - m_s}\end{pmatrix}\in R_p,
			\]
			or, equivalently
			\[
				j + n_r \geq n_s, \quad j + m_r \geq m_s, \quad 
				t_rp^{n_s} \equiv t_sp^{n_r}\pmod{p^{s - j}}.
			\]
			If $n _s = 0$, then $j + r \geq j + m_r\geq m_s = s$.  Similarly if $m_s = 0$, then $j + r \geq j + n_r \geq n_s = s$.  If $n_s,m_s >0$, then $v(t_s) = 0$.  Since $t_rt_s^{-1}p^{j + m_r - m_s} - p^{j + r - s}\in \Z_p$, this again implies that $j + r \geq s$.  It remains to show that $I_{z, s- j} = I_{y, s-j}$.
			
			First we treat the case when $n_r = 0$.  Then $m_s >0$ and $t_s\equiv t_rp^{n_s}$.  Since at least one of $v(t_s)$, $n_s$ must be zero, this shows that $n_s = 0$ and $t_r \equiv t_s\pmod p^{s - j}$.  Using~\eqref{eq:containment1}--\eqref{eq:containment3}, we see that $I_{y, s-j} \leftrightarrow (0, s-j, t_r\bmod p^{s - j})$ and $I_{z, s-j} \leftrightarrow (0, s-j, t_s\bmod p^{s-j})$ so we have equality.
			
			Now consider the case when $m_r = 0$.  Then by~\eqref{eq:containment1}--\eqref{eq:containment3} $I_{y, s-j} \leftrightarrow (s-j, 0, 0)$.  Since $j \geq m_s$, $n_s\geq s - j >0 $.  By another application of~\eqref{eq:containment1}--\eqref{eq:containment3}, we see that regardless of whether $m_s = 0 $ or $m_s>0$ and $v(t_s) = 0$ we have $I_{z, s-j} \leftrightarrow (s-j, 0, 0)$.
			
			Finally we consider the case where $n_r, m_r > 0$ and $v(t_r) = 0$.  If $n_r < s - j$, then $I_{y, s-j} \leftrightarrow (n_r, s- j - n_r, t_r\bmod p^{s - j - n_r})$.  In this case, the conditions above show that $m_s, n_s >0 $.  This in turn implies that $t_s$ is a $p$-adic unit, and since $t_sp^{n_r}\equiv t_rp^{n_s}\pmod p^{s-j}$ we have $n_s = n_r < s_j$.  Then, by~\eqref{eq:containment1}--\eqref{eq:containment3}, $I_{z, s-j}\leftrightarrow (n_r, s- j - n_r, t_s\bmod p^{s - j - n_r})$, which is equal to $I_{y, s-j}$.  The sole remaining case is when $n_r\geq s - j$ which implies that $I_{y, s-j} = (s - j, 0, 0)$.  Since $t_st_r^{-1}p^{n_r}\equiv p^{n_s}\pmod{p^{s- j}}$, $n_s \geq s-j$.  As in the previous paragraph, this means that regardless of whether $m_s = 0 $ or $m_s>0$ and $v(t_s) = 0$ we have $I_{z, s-j} \leftrightarrow (s-j, 0, 0)$.
		\end{proof}

	\subsection{Proof of Theorem~\ref{thm:CountingIdeals}}
	\label{sec:SummaryOfCount}

		Recall that $R$ is a fixed maximal order in $\BBl$ and $x,u\in R$ and $\gamma,\delta\in \Z$ are such that
		\[
			\Tr(u), \quad \Norm(u), \quad \textup{and }\quad
			\Tr(xu^{\vee}) + \gamma\Norm(u)/\delta \quad 
			\textup{ are }0 \textup{ modulo }\delta.
		\]
		We are interested in computing the number of left integral ideals $I$ of $R$ that satisfy
		\begin{equation}\label{eq:IdealConditions}
			\delta,u\in I, \quad \Norm(I) = \delta, \quad \textup{ and }
			w := x + \gamma u/\delta \in \RO(I),
		\end{equation}
		where $\RO(I) = \{y\in\BBl : Iy\subseteq I\}$ is the right order of $I$.  Note that, due to the assumptions above, $w$ is {integral}, i.e. $\Norm(w)$ and $\Tr(w)$ are in $\Z$.

		For any prime $p$, let $R_p := R\otimes_{\Z}\Z_p$.  By~\cite{Vigneras}*{Chap. 3, Prop. 5.1}, the map
		\[
			\left\{\textup{left ideals of }R\right\}\to
			{\prod_{p}}'\left\{\textup{left ideals of }R_p\right\}, \quad 
			I \mapsto \left(I_p\right)
		\]
		is a bijection ($I_p := I\otimes_{\Z}\Z_p$).  Thus
		\[
			\#\{ I\subseteq R : 
			I \textup{ satisfies }~\eqref{eq:IdealConditions}\} 
			= \prod_{p}
			\#\{ I_p\subseteq R_p : 
				I_p \textup{ satisfies }~\eqref{eq:IdealConditions}\}
		\]
		If $p\nmid\delta$, then the first condition of~\eqref{eq:IdealConditions} implies that $\Norm(I_p) = \langle1\rangle$ and so $I_p = R_p$.  If $p = \ell$, then $R_{\ell}$ is the unique maximal order in $\BBl\otimes_{\Q}\Q_{\ell}$ and ideals in $R_{\ell}$ are completely classified by the $\ell$-valuation of their norms, and for any ideal $I_{\ell}\subseteq R_{\ell}$ we have that $R(I_{\ell}) = R_{\ell}$.  Since $w$ is integral and $\delta\mid\Norm(u)$ it is clear that the ideal of norm $\ell^{v(\delta)}$ satisfies conditions~\eqref{eq:IdealConditions}.  Thus for all $p$ outside the finite set $\{ p : p | \delta, p \ne \ell\}$, we have
		\[
			\#\left\{I_p \subseteq R_p := R\otimes_{\Z}\Z_p :
			\textup{ satisfying }~\eqref{eq:IdealConditions}\right\} = 1.
		\]
		
		Henceforth we assume that $p|\delta$ and $p\ne\ell$.  Recall that $c_p\in\Z$ is such that $up^{-c_p}\in R_p\setminus pR_p$ and $r_p = \max(v_p(\delta) - c_p, 0)$.

		\begin{lemma}\label{lem:RuRdelta}
			We have $w\in\RO(R_pu + R_p\delta)$,  \bvchange{ $p^{2c_p + r_p}|\Norm(u)$}, and the norm of $R_pu + R_p\delta$ divides $\delta^2p^{-r_p}$.
		\end{lemma}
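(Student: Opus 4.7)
The plan is to prove the two assertions of Lemma~\ref{lem:RuRdelta} separately, working at a prime $p\mid\delta$ with $p\ne\ell$ and identifying $R_p$ with a maximal order in $\Mat_2(\Q_p)$. Set $I := R_p u + R_p \delta$.

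For the containment $w \in \RO(I)$, it suffices to show that $\delta w \in I$ and $uw \in I$. The former is immediate: $\delta w = \delta x + \gamma u \in R_p \delta + R_p u$. For the latter, I expand $uw = ux + \gamma u^2/\delta$, use Cayley--Hamilton $u^2 = \Tr(u) u - \Norm(u)$, and use the hypotheses $\delta\mid\Tr(u),\Norm(u)$ to write $\gamma u^2/\delta = Au - B$ with integers $A=\gamma\Tr(u)/\delta$ and $B=\gamma\Norm(u)/\delta$. Since $Au \in R_p u \subseteq I$, the task reduces to proving $ux - B \in I$. The key is the quaternionic identity
\[
ux + xu = \Tr(x) u + \Tr(u) x - \Tr(xu^\vee),
\]
which follows from $x + x^\vee = \Tr(x)$, $u+u^\vee = \Tr(u)$, and $ux^\vee + xu^\vee = \Tr(xu^\vee)$. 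The three terms $xu$, $\Tr(x)u$, and $\Tr(u)x$ each lie in $I$ (the last because $\delta \mid \Tr(u)$), so the identity gives $ux \equiv -\Tr(xu^\vee) \pmod{I}$. The third divisibility hypothesis $\Tr(xu^\vee) + B \in \delta\Z$ then yields $ux - B \equiv -\Tr(xu^\vee) - B \equiv 0 \pmod{\delta\Z \subseteq I}$, finishing part (1).

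For the norm bound, I would split into two cases. If $c_p \ge v_p(\delta)$, then $R_p u \subseteq R_p \delta$, so $I = R_p \delta$ has norm $\delta^2 = \delta^2 p^{-r_p}$ since $r_p = 0$. Otherwise $c_p < v_p(\delta)$; writing $u = p^{c_p}\tilde u$ with $\tilde u \in R_p\setminus pR_p$ primitive, we have $I = p^{c_p} J$ where $J := R_p \tilde u + p^{r_p} R_p$, and it suffices to show $\Norm(J) \mid p^{r_p}$. Since $[R_p : J] = \Norm(J)^2$, I would analyze the right-multiplication-by-$\tilde u$ map on the finite group $R_p/p^{r_p} R_p \cong \Mat_2(\Z/p^{r_p})$: its image is exactly $J/p^{r_p} R_p$, so $|R_p/J|$ equals the size of its kernel. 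A row $(\alpha,\beta)\in(\Z/p^{r_p})^2$ of $A$ lies in the kernel iff $(\alpha,\beta)\tilde u \equiv 0 \pmod{p^{r_p}}$; a Smith-normal-form computation on $\tilde u$ over $\Z_p$, using that primitivity forces the smaller elementary divisor to be $1$, shows the number of such rows is $p^{\min(r_p, v_p(\Norm(\tilde u)))} \le p^{r_p}$. Since $A$ has two independent rows, $|R_p/J| \le p^{2r_p}$, hence $\Norm(J) \mid p^{r_p}$, as desired.

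The main obstacle is part (1), the containment $uw \in I$. It is a noncommutative statement that requires all three divisibility hypotheses on $\Tr(u)$, $\Norm(u)$, and $\Tr(xu^\vee) + \gamma\Norm(u)/\delta$ simultaneously, mediated by the not-immediately-motivated quaternion identity for $ux + xu$; the difficulty is recognizing the right algebraic manipulation that collects the three conditions together. The norm bound, by contrast, is essentially a Smith-normal-form calculation once the primitive factor $\tilde u$ has been isolated.
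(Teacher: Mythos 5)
Your proof is correct. For the containment $w\in\RO(R_pu+R_p\delta)$ you do essentially what the paper does: the paper expands $uw=\Tr(u)w-u^{\vee}x-\gamma u^{\vee}u/\delta=\Tr(u)w+x^{\vee}u-(\Tr(u^{\vee}x)+\gamma\Norm(u)/\delta)$ and invokes the same three divisibility hypotheses; your Cayley--Hamilton plus anticommutator reorganization is an equivalent computation. For the norm bound, however, you take a genuinely different route. The paper first proves (when $r_p>0$) the auxiliary claim $v_p(\Norm(u))\ge c_p+v_p(\delta)$ --- and this is where the hypothesis that $p^{r_p}w$ is optimally embedded enters, via part (1) of the lemma and Proposition~\ref{prop:rightorders} --- and then exhibits an explicit element $u+\delta A$ of the ideal whose norm has valuation exactly $c_p+v_p(\delta)$, so that the ideal norm divides it. Your Smith-normal-form/index computation of $[R_p:J]$ for $J=R_p\tilde u+p^{r_p}R_p$ is more elementary and self-contained: it gives $\Norm(J)=p^{\min(v_p(\Norm(\tilde u)),\,r_p)}$ directly, hence the stated divisibility, without invoking the optimal-embedding hypothesis at all. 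The trade-off is that the paper's detour establishes the inequality $v_p(\Norm(u))\ge c_p+v_p(\delta)$, which is not part of the lemma's statement but is quietly needed later in the proof of Theorem~\ref{thm:CountingIdeals} (to know that $I_{u',r_p}$ is well defined, i.e.\ that $v_p(\Norm(up^{-c_p}))\ge r_p$, so that $R_pu+R_p\delta=p^{c_p}I_{u',r_p}$); your argument proves the lemma as stated but would leave that downstream fact to be supplied separately.
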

		\begin{proof}
			In order to prove that $w \in\RO(R_pu + R_p\delta)$, we will show that $\delta w$ and $uw$ are both contained in $R_pu + R_p\delta$.  The first containment is straightforward.  For the second containment, we need the fact that $\Tr(ab) = \Tr(ba)$ for any $a,b\in R_p$ and~\eqref{eq:xu}.  Consider the following expansion
			\begin{align*}
				uw = & \Tr(u)w - u^{\vee}w = 
					\Tr(u) w - u^{\vee}x - {\gamma u^{\vee}u}/{\delta}
				    \\
					= & \Tr(u) w + x^{\vee}u - 
					(\Tr(u^{\vee}x) + {\gamma\Norm(u)}/{\delta}).
			\end{align*}
			Since, by assumption, $\Tr(u)$ and $\Tr(xu^{\vee} + \gamma\Norm(u)/\delta$ are divisible by $\delta$, $uw\in R_pu + R_p\delta.$
			
            \bvchange{Now we show that $v_p(\Norm(u))\geq 2c_p + r_p$.  If $c_p = 0$, then this follows from our assumptions on $x,u,\delta,$ and $\gamma$.  If $r_p=0$ then this follows from the definition of $c_p$.  
			
			Assume that $c_p,r_p >0$ and that $v_p(\Norm(u))< 2c_p + r_p = c_p + v_p(\delta)$. Using the criterion in Lemma~\ref{lem:scalar-containment} we can show that $\delta\in R_pu$ so $R_pu + R_p\delta = R_p u$.  Since $\RO(R_pu) = \RO(R_pup^{-c_p})$, by the first part of the proof $w$ is in the right order of an ideal of norm $\Norm(u) - 2c_p < r_p$.  However, Proposition~\ref{prop:rightorders} shows that this is impossible since by assumption $p^{r_p}w$ is optimally embedded in $\Mat_2(\Z_p)$.  This proves the claim.}
            
			Now we compute the norm of $R_pu + R_p\delta$.  If $r_p = 0$, then $u\in R_p\delta$ and $\Norm(R_pu + R_p\delta) = \Norm(R_p\delta) = \delta^2.$  Now assume that $r_p >0$, so $v_p(\Norm(u))\geq c_p + v_p(\delta)$.  
 %
            %
            If $\Norm(u) = c_p + v_p(\delta)$ then clearly $\Norm(R_p u + R_p\delta)|\delta p^{c_p}$.  Assume that $\Norm(u) > c_p + v_p(\delta)$ and write $u$ as $\begin{pmatrix}a_{1,1} & a_{1,2}\\a_{2,1} & a_{2,2}\end{pmatrix}$.  By the definition of $c_p$, there exists $i,j$ such that $v_p(a_{i,j}) = c_p$.  Define $A\in\Mat_2(\Z_p)$ to be such that row $i$ and column $j$ consist only of zeros and the remaining entry has a $1$.  Then 
			\[
				\Norm(u + \delta A) = \Norm(u) + \delta\Tr(Au^{\vee}) = \Norm(u) + \delta a_{i,j}.
			\]
			Since $u + \delta A\in R_pu + R_p\delta$, this shows that $\Norm(R_pu + R_p\delta) | \delta p^{c_p}$, which completes the proof.
		\end{proof}

		Now we are in a position to prove that there are
		\[
			\sum_{\substack{j = 0 \\j \equiv v_p(\delta)\pmod 2}}^{v_p(\delta)} \frakI^{(p)}_{j - r_p}(\Tr(w), \Norm(w))
		\]
		many ideals $I_p \subset R_p$ that satisfy~\eqref{eq:IdealConditions}.
		
		If $c_p = 0$, then the sum is $1$, so we must prove that there is a unique ideal that satisfies~\eqref{eq:IdealConditions}.  In this case $u$ is a primitive element of $R_p$ so Lemmas~\ref{lem:unique-primitive} and~\ref{lem:scalar-containment} imply that there is a unique ideal of norm $\delta$ that contains $u$, $I_{u, v(\delta)}$.  We clearly have $R_pu + R_p\delta \subseteq I_{u, v(\delta)}$.  Lemma~\ref{lem:RuRdelta} gives the opposite containment, so we have equality.  Another application of Lemma~\ref{lem:RuRdelta} shows that $w\in\RO(I_{u,v(\delta)})$.
		
		Henceforth we assume that $c_p > 0$.  Using Lemma~\ref{lem:scalar-containment} and Lemma~\ref{lem:RuRdelta}, one can show that $R_pu + R_p\delta = p^{\bvchange{c_p}}I_{u', \bvchange{r_p}}$, where $u' := up^{-c_p}$.  Therefore, $w \in \RO(p^{\bvchange{c_p}}I_{u', \bvchange{r_p}}) = \RO(I_{u',\bvchange{r_p}})$.  By assumption $w,r_p$ satisfy the hypotheses of Proposition~\ref{prop:rightorders}, so $I_{u', \bvchange{r_p}}$ is the unique ideal of norm $p^\bvchange{r_p}$ such that $w \in \RO(I_{u', \bvchange{r_p}})$, and moreover, for any ideal $I$ such that $w \in \RO(I)$ we have $I \subseteq I_{u', \bvchange{r_p}}$.  By Lemma~\ref{lem:scalar-containment}, we also know that for any ideal $I$ of norm $p^{v(\delta)}$ such that $I \subseteq I_{u', \bvchange{r_p}}$ we have $u$, $\delta \in I$.  Thus it suffices to count the number of ideals $I$ of norm $p^{v(\delta)}$ such that $w \in \RO(I)$.  This is equal to the number of primitive ideals of norm $p^j$ where $j$ is at most $v(\delta)$ and $j\equiv v(\delta)\pmod 2$.  Applying Proposition~\ref{prop:rightorders} completes the proof.

		Since we have already shown that 
		\[
			\#\{ I\subseteq R : 
			I \textup{ satisfies }~\eqref{eq:IdealConditions}\} 
			= \prod_{p|\delta, p\ne\ell}
			\#\{ I_p\subseteq R_p : 
				I_p \textup{ satisfies }~\eqref{eq:IdealConditions}\},
		\]
		this proves Theorem~\ref{thm:CountingIdeals}.\qed


	\begin{bibdiv}
		\begin{biblist}

			\bib{WIN2}{article}{
				author = {Anderson, J.},
				author = {Balakrishnan, J. S.},
				author = {Lauter, K.},
				author = {Park, J.},
				author = {Viray, B.},
				title = {Comparing arithmetic intersection theory formulas},
               conference={
                  title={Women in Numbers 2: Research Directions in Number Theory},
                  },
               book={
                  series={Contemporary Mathematics},
                  volume={606},
                  publisher={Amer. Math. Soc., Providence, RI},
               },
               date={2013},
               pages={65--82},
			}

			\bib{BY}{article}{
			   author={Bruinier, J.},
			   author={Yang, T.},
			   title={CM-values of Hilbert modular functions},
			   journal={Invent. Math.},
			   volume={163},
			   date={2006},
			   number={2},
			   pages={229--288},
			   issn={0020-9910},
			}

			\bib{Deuring}{article}{
			   author={Deuring, M.},
			   title={Die Typen der Multiplikatorenringe elliptischer
			   Funktionenk\"orper},
			   language={German},
			   journal={Abh. Math. Sem. Hansischen Univ.},
			   volume={14},
			   date={1941},
			   pages={197--272},
			}

			\bib{GL}{article}{
	   			author={Goren, E. Z.},
	   			author={Lauter, K. E.},
	   			title={Class invariants for quartic CM fields},
	   			language={English, with English and French summaries},
	   			journal={Ann. Inst. Fourier (Grenoble)},
	   			volume={57},
	   			date={2007},
	   			number={2},
	   			pages={457--480},
	   			issn={0373-0956},
			}
			
			\bib{GL10}{article}{
				author={Goren, E. Z.},
	   			author={Lauter, K. E.},
	   			title={Genus 2 Curves with Complex Multiplication}, 
				journal={International Mathematics Research Notices}, 
				date={2011},
				pages={75 pp.} 
			}

			\bib{Gross-CanonicalLifts}{article}{
			   author={Gross, B.},
			   title={On canonical and quasicanonical liftings},
			   journal={Invent. Math.},
			   volume={84},
			   date={1986},
			   number={2},
			   pages={321--326},
			   issn={0020-9910},
			}

			\bib{GrossKeating}{article}{
			   author={Gross, B.},
			   author={Keating, K.},
			   title={On the intersection of modular correspondences},
			   journal={Invent. Math.},
			   volume={112},
			   date={1993},
			   pages={225--245},
			}

			\bib{GZ-SingularModuli}{article}{
			   author={Gross, B.},
			   author={Zagier, D.},
			   title={On singular moduli},
			   journal={J. Reine Angew. Math.},
			   volume={355},
			   date={1985},
			   pages={191--220},
			   issn={0075-4102},
			}

            \bib{WIN}{article}{
               author={Grundman, H.},
               author={Johnson-Leung, J.},
               author={Lauter, K.},
               author={Salerno, A.},
               author={Viray, B.},
               author={Wittenborn, E.},
               title={Igusa class polynomials, embeddings of quartic CM fields, and
               arithmetic intersection theory},
               conference={
                  title={WIN---women in numbers},
               },
               book={
                  series={Fields Inst. Commun.},
                  volume={60},
                  publisher={Amer. Math. Soc., Providence, RI},
               },
               date={2011},
               pages={35--60},
            }

			\bib{HY}{book}{
			   author={Howard, B.},
			   author={Yang, T.},
			   title={Intersections of Hirzebruch-Zagier divisors and CM cycles},
			   series={Lecture Notes in Mathematics},
			   volume={2041},
			   publisher={Springer},
			   place={Heidelberg},
			   date={2012},
			   pages={viii+140},
			   isbn={978-3-642-23978-6},
			}

			\bib{Lang-Elliptic}{book}{
			   author={Lang, S.},
			   title={Elliptic functions},
			   series={Graduate Texts in Mathematics},
			   volume={112},
			   edition={2},
			   note={With an appendix by J. Tate},
			   publisher={Springer-Verlag},
			   place={New York},
			   date={1987},
			   pages={xii+326},
			}

			\bib{Lauter-Genus2Conj}{misc}{
				author={Lauter, K.},
				title={Primes in the denominators of Igusa Class Polynomials},
				note={Preprint, \texttt{arXiv:0301.240}},
			}

			\bib{LV-SingularModuli}{misc}{
				author = {Lauter, K.},
				author = {Viray, B.},
				title = {On singular moduli for arbitrary 
					discriminants},
				note = {Preprint, \texttt{arXiv:1206.6942}},
			}

			\bib{Vigneras}{book}{
			   author={Vign{\'e}ras, M.-F.},
			   title={Arithm\'etique des alg\`ebres de quaternions},
			   language={French},
			   series={Lecture Notes in Mathematics},
			   volume={800},
			   publisher={Springer},
			   place={Berlin},
			   date={1980},
			   pages={vii+169},
			   isbn={3-540-09983-2},
			}
			
			\bib{Waterhouse}{article}{
			   author={Waterhouse, W.},
			   title={Abelian varieties over finite fields},
			   journal={Ann. Sci. \'Ecole Norm. Sup. (4)},
			   volume={2},
			   date={1969},
			   pages={521--560},
			   issn={0012-9593},
			}

			\bib{Yang1}{article}{
				author={Yang, T.},
				title={An arithmetic intersection formula on Hilbert modular 
					surfaces},
				journal={Amer. J. Math.},
				volume={132},
				date={2010}, 
				pages={1275--1309},
			}

            \bib{Yang2}{article}{
               author={Yang, T.},
               title={Arithmetic intersection on a Hilbert modular surface and the
               Faltings height},
               journal={Asian J. Math.},
               volume={17},
               date={2013},
               number={2},
               pages={335--381},
               issn={1093-6106},
            }

		\end{biblist}
	\end{bibdiv}

\end{document}